\documentclass[11pt]{amsart}
\usepackage{amssymb,amsthm,amsmath,epsfig,latexsym}
\usepackage{anyfontsize}
\usepackage[pagebackref]{hyperref}
\usepackage{color}
\usepackage[all]{xy}
\usepackage{calc,times,verbatim}
\usepackage{geometry} %geometry package allows easy manipulation of margins, etc.
\geometry{margin=1in} %set margin size using this; look online for how to change additional style stuff
\textwidth=5.6in
%\textheight=22cm
%\flushbottom
%\hoffset=-1.3cm
%\voffset=-1.3cm
%%%%%%%%%%%%%%%%%%%
%\def\vdotfill{
	%	\vbox to 2em
	%	{\cleaders\hbox{.}\vfill}}

% \mmbox enables macros to survive outside of $ ... $

\newcommand{\Proj}{{\mathbb P}}
\newcommand{\K}{{\mathbb K}}

\newcommand{\A}{{\mathcal{A}}}

\newcommand{\coker}{\mathop{\rm coker}\nolimits}
\sloppy
\newtheorem{defn0}{Definition}[section]
\newtheorem{prop0}[defn0]{Proposition}
\newtheorem{quest0}[defn0]{Question}
\newtheorem{thm0}[defn0]{Theorem}
\newtheorem{lem0}[defn0]{Lemma}
\newtheorem{corollary0}[defn0]{Corollary}
\newtheorem{example0}[defn0]{Example}
\newtheorem{remark0}[defn0]{Remark}

\newtheorem{Theorem}{Theorem}[section]
\newtheorem{Lemma}[Theorem]{Lemma}
\newtheorem{Corollary}[Theorem]{Corollary}
\newtheorem{Proposition}[Theorem]{Proposition}
\newtheorem{Remark}[Theorem]{Remark}
\newtheorem{Example}[Theorem]{Example}

\newtheorem{Definition}[Theorem]{Definition}
\newtheorem{Question}[Theorem]{Question}

\newtheorem{Caveat}[Theorem]{Caveat}

%\newcommand{\qed}{\mbox{$\Box$}}
%\newenvironment{proofn}{\noindent {\bf Proof.}}{\qed \vskip 6pt}
%\newenvironment{proofn}{\noindent {\bf Proof.}}{\vskip 6pt}

%%%%%%%%%%%%%%%%%%%%%%%%%%%%%%%

\newcommand{\rar}{\rightarrow}
\newcommand{\lar}{\longrightarrow}
\newcommand{\llar}{-\kern-5pt-\kern-5pt\longrightarrow}

\def\fm{{\mathfrak m}}

\def\xx{{\mathbf x}}

% The algebraic

%\def\gr#1#2{{\rm gr}\, _{#1}(#2)}

\def\Ht{{\rm ht}\,}
\def\depth{{\rm depth}\,}

\def\rk{{\rm rank}\,}
\def\syz{\mbox{\rm Syz}}
%\def\cok{\mbox{\rm coker}}

%\def\H#1#2{H_{\bullet}({#1};{#2})}
%\def\HH#1#2#3{H _{#1}({#2};{#3})}
%\def\K#1#2{K_{\bullet}({#1};{#2})}
%\def\M#1#2{{\cal M}_{\bullet}({#1};{#2})}
%\def\Mr#1#2{{\cal M}_{{}_r \bullet}({#1};{#2})}
%\def\T#1#2#3{{\rm Tor}\,^{#1}_{\bullet}({#2},{#3})}
%\def\To#1#2#3#4{{\rm Tor}\,^{#1}_{#2}({#3},{#4})}
%\def\Ext#1#2#3#4{{\rm Ext}\,^{#1}_{#2}({#3},{#4})}

%\def\surjects

%\def\restr{{\kern-1pt\restriction\kern-1pt}}
%\def\ldasharrow{{--\dasharrow}}

%-------------- AmsTeX symbols------------------

\def\NN{\mathbb N}

\def\pp{{\mathbb P}}

%%%%%%%%%%%%%%%%%%%%%%

\begin{document}

\date{\sc\today}

\title{Rose--Terao--Yuzvinsky theorem for reduced forms}

\author{Ricardo Burity, Zaqueu Ramos, Aron Simis and \c{S}tefan O. Toh\v{a}neanu}

	\subjclass[2010]{Primary 13A30; Secondary 13C15, 14N20, 52C35} \keywords{Jacobian ideal, arrangement of hypersurfaces, depth, reduction of an ideal. \\
		\indent The second author was partially supported by a grant from CNPq (Brazil)(304122/2022-0).\\
		\indent The third author was partially supported by a grant from CNPq (Brazil) (301131/2019-8).\\
		\indent Burity's address: Departamento de Matemática, Universidade Federal da Paraiba, 58051-900, J. Pessoa, PB,  Brazil,
		Email: ricardo@mat.ufpb.br\\
		\indent Ramos' address: Departamento de Matem\'atica, CCET, Universidade Federal de Sergipe, 49100-000 S\~ao Cristov\~ao, SE, Brazil
		Email: zaqueu@mat.ufs.br\\
        \indent Simis' address: Departamento de Matemática, Universidade Federal de Pernambuco, 50740-560 Recife, PE,  Brazil,
		Email: aron@dmat.ufpe.br\\
		\indent Toh\v{a}neanu's address: Department of Mathematics, University of Idaho, Moscow, Id 83844-1103, USA, 
		Email: tohaneanu@uidaho.edu}

\begin{abstract}
	Yuzvinsky and  Rose--Terao have shown that the homological dimension of the gradient ideal of the defining polynomial of a generic hyperplane arrangement is maximum possible.
	In this work one provides yet another proof of this result, which in addition is totally different from the one given by Burity--Simis--Toh\v{a}neanu.
Another main drive of the paper concerns a version of the above result in the case of a product of general forms of arbitrary degrees (in particular, transverse ones).
Finally,  some relevant cases of non general forms are also contemplated. 
\end{abstract}

\maketitle

%\tableofcontents

%\date{\sc\today}

\section{Introduction}

Let $R:=\mathbb K[x_1,\ldots,x_n]$ denote a standard graded polynomial ring over a field $\K$.
For any form ${f}\in R$, one has the module of {\em logarithmic derivations} of ${f}$ defined as $\mathrm{Derlog}(f):=\{\theta\in {\rm Der}(R)| \theta ({f})\subset \langle { f} \rangle\}.$ 
Its $R$-module structure  is well known if ${\rm char}(\mathbb K)$ does not divide $m=\mathrm{deg}({ f})$, namely:
\begin{equation*}
\mathrm{Derlog}(f)= \mathrm{Syz}(J_f)\oplus R\theta_E,
\end{equation*}
where $\mathrm{Syz}(J_{f})\subset R^n$ is up to the identification ${\rm Der}(R)=R^n$ the syzygy module of the  partial derivatives of  $f$ and $\theta_E=\sum_{i=1}^{n}x_i\frac{\partial}{\partial{x_i}}$ is the Euler derivation.

The above notation  brings upfront an object slightly behind the corner: the ideal of $R$ generated by the  partial derivatives of  ${f}$, the so-called {\em gradient ideal}  (or {\em Jacobian ideal}) $J_{f}$ of ${f}$.

The $\K$-algebra $R/J_{f}$ is often called the {\em Milnor algebra} of ${f}$. At times, this terminology seems to obfuscate the search of the invariants of $J_{f}$ itself, whereas the latter have been around way back (see, e.g., \cite{KoszulSim1977}; also \cite{JacobSim2005}, with an even earlier mention).
Quite a bit of  geometric and cohomological  uses of the gradient ideal, at least in the case where $V({f})$ is smooth, was developed in the work of Griffiths (\cite{Griffiths1969}) and Carlson--Griffiths (\cite{Carlson-Griffiths1979}), later, by Peters--Steenbrink (\cite{Peters-Steenbrink1983}), and this year's Aguilar--Green--Griffiths (\cite{AGG2024}).

Unfortunately, very often  the algebraic background around the gradient ideal would not be fully shown.
A major goal of this paper is to bring out some of this by focusing upon the related homological behavior. 
More specifically, we look at the depth of the ring $R/J_{{f}}$ under three different perspectives.

In the first, we consider a central hyperplane arrangement $\mathcal{A}$ of rank $n$ in affine space $\mathbb{K}^n$ over an infinite field $\mathbb{K}$ and let $l_1,\ldots, l_m\in R:=\mathbb K[x_1,\ldots,x_n]$ denote the linear forms defining the corresponding hyperplanes.
Set $F:=l_1\cdots l_m\in R$, the defining polynomial of $\mathcal{A}$. 
In the case where the arrangement is generic, -- meaning that every subset of $\{l_1,\ldots,l_m\}$ with $n$ elements is $\K$-linearly independent -- and that $m\geq n+1$, Rose-Terao (\cite{RoseTerao}) and Yuzvinsky (\cite{Yuzvinsky}) have established that the homological dimension of $\mathrm{Derlog}(\mathcal{A})$ is $n-2$.   The statement is equivalent to having  ${\rm depth}(R/J_{F})=0$, or still that the irrelevant maximal ideal $\fm:=\langle x_1,\ldots,x_n\rangle$ is an associated prime of $R/J_{F}$.
In their beautiful paper,  Rose and Terao took the approach of  establishing explicit free resolutions of the modules $\Omega^q(\mathcal{A})$  of logarithmic $q$-forms, for $0\leq q\leq n-1$, in the spirit of Lebelt's work (\cite{Lebelt}). Then, drawing on the isomorphism $\mathrm{Derlog}(\mathcal{A})\simeq \Omega^{n-1}(\mathcal{A})$, they derived the free resolution of the module of logarithmic derivations, and hence of $R/J_{F}$ as well.

In the second perspective, motivated by the first one, we  take a more general viewpoint, by dealing with the case of general forms, as a natural replacement for a generic arrangement.
Thus, let $\mathbb K$ be an infinite field, of characteristic zero or of characteristic not dividing certain critical integers. Let $R:=\K[x_1,\ldots,x_n]$ ($n\geq 2$) denote a standard graded polynomial ring over $\K$. 
We focus on {\em general} forms $f_1,\ldots,f_m\in R, m\geq 2$, of arbitrary degrees $\deg(f_i)=d_i\geq 1, i=1,\ldots,m$, and on the associated product $F:=f_1\cdots f_m$.

Finally, the third perspective is a natural complement to the previous perspective in that it deals with arbitrary forms,  emphasizing a small numbers of those.
As next-best properties avoiding genericity, we focus on transversality  and smoothness variously as supporting assumptions.

In all three perspectives, the main goal is to decide when  $\depth {R/J_F}=0$ (which is equivalent to showing that $J_F$ is non-saturated  with respect to ${\frak m}:=\langle x_1,\ldots,x_n\rangle$). 

Accordingly, we formally set:
\begin{Definition}\rm
A set of forms $\{f_1,\ldots,f_m\}\subset R=\K[x_1,\ldots,x_n]$ will be said to satisfy the RTY-{\em property} (RTY for Rose--Terao--Yuzvinsky) if $\depth R/J_F=0$, where $F=f_1\cdots f_m$. Equivalently, the property says that $R/J_F$ has homological dimension $n$.
\end{Definition}
When $n=3$, the environment  narrows down, in the sense that the failure of the RTY-property means that $F$ is a free divisor, hence their tight relationship in this dimension.
In higher dimensions no such conundrum persists, so that having $\depth R/J_F\geq 1$ may not tell much.
Yet, even in low dimensions, the RTY-property itself does not reveal much as to the homological classification (see, e.g., Subsection~\ref{Subsection4.2}).

Various aspects of this subject have been recently brought up in \cite{Buse_et_al2021}, which is an important addition to the questions treated in this work.  Their main focus is on the hypersurface $V(F)$ itself and the related  algebra $R/J_F$, with a particular quest for the Castelnuovo--Mumford regularity of the latter.
In the present work our emphasis -- or, perhaps we should say, our methods and results -- are different.

 Next is a brief description of the sections, along with the main results of the paper.
 
 Section~\ref{Section2} deals with distinct linear forms $l_1,\ldots,l_m\in R:=\K[x_1,\ldots,x_n]$ defining a central arrangement $\mathcal{A}$ in affine space $\K^n$ over an infinite field $\mathbb{K}$ with $m\geq n+1.$ 
We assume once for all that $\mathcal{A}$ is {\it generic}, that is, any $n$ among the defining linear forms are $\mathbb{K}$-linearly independent, a property which being invariant under change of coordinates allows to assume that  $l_i=x_i$ for every $1\leq i\leq n.$
Let $A$ denote the coefficient matrix of the remaining linear forms, i. e., one has a matrix equality
$$A\, [x_1 \, \cdots\, x_n]^t=[l_{n+1}\, \cdots\, l_m]^t.$$
Introduce yet another player, namely, the ideal $I \subset R$ generated by the $(m-1)$-fold products of the defining linear forms, and let $\Gamma$ denote its well-known syzygy matrix based on the assumed shape of the linear forms of $\mathcal{A}$. 
Finally, consider the following $(m-n)\times (m-1)$ product matrix
\begin{equation*}
	\mathfrak{S}: =	[
	\begin{array}{c|c}
		-A & \mathbb{I}_{m-n}
	\end{array}
	]\; \Gamma,
\end{equation*}
where $\mathbb{I}_{m-n}$ denotes the identity matrix of the ascribed size, while $[\begin{array}{c|c}
	-A & \mathbb{I}_{m-n} \end{array}] $ is a two block matrix.
The main technical result of the Section says that, if char$(\K)\nmid m$, then
$I_{m-n}(\mathfrak{S})=\langle x_1,\ldots,x_n\rangle^{m-n}$, the ideal of maximal minors of  $\mathfrak{S}$.
Based on this preliminary, the main results are Theorem~\ref{RTYoriginal} and Theorem~\ref{minimal_reduction}, respectively proving the original result of Rose--Terao and Yuzvinsky, and the result of \cite{BuSiTo2022} to the  effect that the gradient ideal $J_F$ is a  minimal reduction of the ideal $I$ of $(m-1)$-fold products.
We give entirely fresh proofs for both results, with the first drawing upon \cite[Theorem D]{AndSim1986}.

 Section~\ref{Section3} contains the complete discussion of the RTY-property for general forms.
 Our main result is the RTY-property for a set of $m$ forms of arbitrary degrees $\geq 2$ such any subset with $\leq n$ elements is general.
 In addition, we give the explicit shape of the minimal free resolution of the gradient ideal of the product of the forms.
 The Betti numbers of the resolution are accounted for, as well as some notion of the resolution maps. 
 
 The main results are Theorem~\ref{basic_open_set}, Proposition~\ref{codim_jac} and Theorem~\ref{main_general_forms}, the last of these drawing upon the main theorem of \cite{AndSim1981}.
 
 A driving result of the last section is Theorem~\ref{two__forms_one_smooth} which gives two  conditions for such a set of two forms to have RTY-property. These conditions are easy enough to check, allowing for subsequent corollaries and examples of various kinds, including one regarding so-called Fermat forms.
 In between we give full treatment to the case of two non-degenerate plane quadrics, giving their exact shape in order that their product be a free divisor. As a complement, we give the full homological classification picture of the product in terms of the Bourbaki degree introduced in \cite{Bour2024} and certain plane curves introduced by Dimca and Sticlaru.
 
 The main results are Theorem~\ref{two__forms_one_smooth} and Proposition~\ref{free-again}.

For all purposes in this work, the ring $R$ is assumed to be standard graded.
 In what follows $\syz (J_F)\subset R^n$ denotes the syzygy module of the partial derivatives of $F$, and ${\rm indeg}(\syz (J_F))$ is the least (standard) degree of a nonzero element of this graded module.
 
 Throughout,  the Jacobian matrix of a set of forms $\mathbf{f}=\{f_1,\ldots,f_r\}\subset R$  will be denoted $\Theta(\bf f)$.

\section{Linear forms}\label{Section2}
In this section we revisit the case of linear forms in generic arrangement and retrieve results of \cite{RoseTerao} and \cite{BuSiTo2022}.
The present approach is essentially matrix-theoretic and quite diverse.

\subsection{Coefficient matrix}

Let $l_1,\ldots,l_m\in R:=\K[x_1,\ldots,x_n]$ be distinct linear forms defining a rank $n$ central arrangement $\mathcal{A}$ in affine space $\K^n$ over an infinite field $\mathbb{K}$ with $m\geq n+1.$  
Quite generally here, the Jacobian matrix 
$\Theta=(\partial l_i/\partial x_j)$ coincides with the matrix of coefficients of the forms with respect to $\{x_1,\ldots,x_n\}$:
$$\Theta\left[\begin{matrix}x_1\\\vdots\\x_n\end{matrix}\right]=\left[\begin{matrix}l_1\\\vdots\\l_m\end{matrix}\right].$$
By the same token, for any sequence of indices $1\leq i_1< \cdots < i_n\leq m$, one has
$$\Theta_{i_1,\ldots,i_n}\left[\begin{matrix}x_1\\\vdots\\x_n\end{matrix}\right]=\left[\begin{matrix}l_{i_1}\\\vdots\\l_{i_n}\end{matrix}\right],$$
where $\Theta_{i_1,\ldots,i_n}$ denotes the $n\times n$ submatrix of $\Theta$ formed by the rows $i_1,\ldots,i_n.$
In order words, this matrix is the representative matrix of the set $\{l_{i_1},\ldots, l_{i_n}\}$ in the canonical basis $\{x_1,\ldots,x_n\}$. Thus,$\{l_{i_1},\ldots, l_{i_n}\}$ is linearly independent over $\K$ if and only if $\Theta_{i_1,\ldots,i_n}$ has non-vanishing determinant.

Suppose henceforth that the arrangement $\mathcal{A}$ is {\it generic}, that is, any $n$ among the defining linear forms are $\mathbb{K}$-linearly independent. 
This property is invariant under change of coordinates, thus we will assume that  $l_i=x_i$ for every $1\leq i\leq n.$ Then we have
$$\Theta=\left[\begin{matrix}\mathbb{I}_{n}\\ A\end{matrix}\right]$$ 
where $A=(\partial l_i/\partial x_j)_{n+1\leq i\leq m,\, 1\leq j\leq n }.$ In other words, $A$ is the coefficient matrix of the ``remaining" linear forms:
\begin{equation}\label{coefficient}
	A\left[\begin{matrix}x_1\\\vdots\\x_n\end{matrix}\right]= \left[\begin{matrix}l_{n+1}\\\vdots\\l_m\end{matrix}\right]
\end{equation}
By the above,  we have the well-known fact that $\mathcal{A}$ is a generic arranjement if and only if any $n$-minor of the full matrix $\Theta$ is non-vanishing. 
Actually, we claim more:

\begin{Lemma}\label{minorsofA}
	%	Let $r$ be an integer such that $1\leq r\leq \min\{m-n,n\}$. 
	Every minor of $A$ is non-vanishing.
\end{Lemma}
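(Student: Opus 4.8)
The plan is to reduce the nonvanishing of an arbitrary minor of $A$ to the genericity hypothesis, which a priori only controls the full $n\times n$ minors of $\Theta$. The key point is that every square submatrix of $A$ can be completed to an $n\times n$ submatrix of $\Theta$ by appending suitable rows drawn from the identity block $\mathbb{I}_n$, and the determinant of such a completion recovers the given minor up to sign.

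First I would fix an arbitrary minor: let $B$ be the $k\times k$ submatrix of $A$ whose rows are indexed by $n+1\le i_1<\cdots<i_k\le m$ and whose columns are indexed by $1\le j_1<\cdots<j_k\le n$, where $1\le k\le\min\{m-n,\,n\}$. Put $S:=\{1,\ldots,n\}\setminus\{j_1,\ldots,j_k\}$, so $|S|=n-k$, and consider the set of $n$ linear forms $\{l_{i_1},\ldots,l_{i_k}\}\cup\{x_j\mid j\in S\}$. Since $x_j=l_j$ for $1\le j\le n$ while $i_1,\ldots,i_k>n$, this is a set of $n$ \emph{distinct} forms among $l_1,\ldots,l_m$; by genericity it is $\K$-linearly independent, so its representative matrix $M$ in the basis $\{x_1,\ldots,x_n\}$ has $\det M\ne 0$, just as was recalled above for the submatrices $\Theta_{i_1,\ldots,i_n}$.

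Next I would identify $M$ concretely. Its rows coming from the forms $x_j$, $j\in S$, are the standard basis vectors $e_j$, while its rows coming from $l_{i_1},\ldots,l_{i_k}$ are the corresponding rows of $A$. Permuting rows so that the $e_j$'s appear first and permuting columns so that those indexed by $S$ appear first, $M$ is carried to the block lower triangular matrix
\[
M'=\begin{pmatrix}\mathbb{I}_{n-k}&0\\ *&B\end{pmatrix},
\]
the upper right block being zero because each $e_j$ with $j\in S$ vanishes in every column indexed by $\{j_1,\ldots,j_k\}$. Hence $\det M=\pm\det M'=\pm\det B$, and $\det M\ne 0$ forces $\det B\ne 0$. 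Since $B$ was an arbitrary square submatrix of $A$, every minor of $A$ is nonvanishing.

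I do not anticipate a genuine obstacle; the only things to watch are the bookkeeping with the row and column permutations, which affect only the sign and hence are harmless here, and the verification that the completed collection really consists of $n$ distinct defining forms of $\mathcal{A}$, so that the genericity hypothesis is legitimately invoked.
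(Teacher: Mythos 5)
Your proof is correct and follows essentially the same route as the paper's: you complete the chosen square submatrix of $A$ to an $n\times n$ submatrix of $\Theta$ by appending the identity rows indexed by the complementary columns, and read off the minor (up to sign) from the resulting block-triangular determinant, exactly as in the paper's argument (which phrases the same completion after a WLOG placing the submatrix in the upper-left corner of $A$). The only cosmetic difference is that you track the row/column indices explicitly instead of invoking the permutation reduction, which changes nothing of substance.
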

\begin{proof} Given $1\leq r\leq \min\{m-n,n\}$, let $A'$ denote an $r\times r$ submatrix of $A$.
	Up to row and column permutations 
	%of the matrix $\Theta=\left[\begin{matrix}\mathbb{I}_{n}\\ A\end{matrix}\right]$, 
	we may suppose  that  $A'$ lies on the upper left corner of $A.$  Now, consider the following $n\times n$ submatrix of $\Theta$:
	$$B=\left[\begin{matrix}\boldsymbol0&\mathbb{I}_{n-r}\\A'&\ast\end{matrix}\right].$$
	Clearly, $\det B=-\det A'$, hence the result follows from the above observations.
\end{proof}

Throughout, let $F:=x_1\cdots x_{n}l_{n+1}\cdots l_m$ stand for the defining form of the arrangement $\mathcal{A}$.
We bring in the ideal $I \subset R$ generated
by the $(m-1)$-fold products of the defining forms.
Thus, $I=\langle f_1,\ldots,f_m\rangle,$ where $f_{i}:=F/x_i$ for every $1\leq i\leq n$ and $f_i:=F/l_i$ for every $n+1\leq i\leq m$.

It is well-known that $I$ is a codimension two perfect ideal admitting the following matrix as syzygy matrix relative to the above set of generators (see \cite{Sch2}):
\small{$$\Gamma=\left[\begin{matrix}
		-x_1&\\
		&\ddots\\
		&&-x_n\\
		&&&-l_{n+1}\\
		&&&&\ddots\\
		&&&&&-l_{m-1}\\
		l_m&\cdots&l_m&l_m&\cdots&l_m
	\end{matrix}\right],$$}
where empty slots have null entries.

The following product matrix of size $(m-n)\times (m-1)$ will play a decisive role in the argument that follows: 
\begin{equation*}
	\mathfrak{S}: =	[
	\begin{array}{c|c}
		-A & \mathbb{I}_{m-n}
	\end{array}
	]\; \Gamma
\end{equation*}
Here, $\mathbb{I}_{m-n}$ denotes the identity matrix of the ascribed size, while $[\begin{array}{c|c}
	-A & \mathbb{I}_{m-n}\end{array}] $ is a two block matrix.

\begin{Proposition}\label{minors_are_monomials}  	{\rm (char$(\K)\nmid m$)}\;
	$I_{m-n}(\mathfrak{S})=\langle x_1,\ldots,x_n\rangle^{m-n}.$ 
\end{Proposition}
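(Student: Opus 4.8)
The plan is to compute the matrix $\mathfrak{S}$ explicitly and read off its maximal minors. Write the syzygy matrix $\Gamma$ in the block form suggested by its display: the top $n$ rows hold $-\mathrm{diag}(x_1,\dots,x_n)$ (and zeros in the last $m-1-n$ columns), the middle $m-1-n$ rows hold $-\mathrm{diag}(l_{n+1},\dots,l_{m-1})$, and the bottom row is $(l_m,\dots,l_m)$. Partitioning $\mathfrak{S}=[-A \mid \mathbb{I}_{m-n}]\,\Gamma$ accordingly, the identity block $\mathbb{I}_{m-n}$ multiplies the bottom $m-n$ rows of $\Gamma$ (the $l_i$-diagonal part for $n+1\le i\le m-1$ together with the all-$l_m$ row), while $-A$ multiplies the top $n$ rows. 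Since $-A$ times $-\mathrm{diag}(x_1,\dots,x_n)$ is just $A$ with its $j$-th column scaled by $x_j$, one obtains, columnwise: for $1\le j\le n$, the $j$-th column of $\mathfrak{S}$ is $x_j$ times the $j$-th column of $A$ (which by \eqref{coefficient} lives in $\K^{m-n}$), plus the contribution of the all-$l_m$ bottom row of $\Gamma$, which adds $l_m$ to the last entry. For $n+1\le j\le m-1$, the $j$-th column is $-l_j$ on the $(j-n)$-th coordinate, plus $l_m$ on the last coordinate. So $\mathfrak{S}$ has a clean "almost-diagonal plus last-row" structure.

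Next I would exploit exactly this structure to identify $I_{m-n}(\mathfrak{S})$. The first $n$ columns have entries in $\fm\cdot A$-columns, and the entries of $A$ are nonzero scalars (indeed Lemma~\ref{minorsofA} is available but for this computation we only need the individual entries, which are nonzero because the arrangement is generic and $l_i=x_i$ for $i\le n$); the remaining $m-1-n$ columns are "elementary" columns supported on two coordinates. The key point is that, after multiplying the $j$-th column ($n+1\le j\le m-1$) by a suitable scalar and doing column operations, one can eliminate the $l_m$'s from the bottom row against the diagonal $-l_j$ entries; this expresses every maximal minor as a $\K$-linear combination of products $x_{j_1}\cdots x_{j_{m-n}}$ where the $j_k$ range over size-$(m-n)$ subsets of $\{1,\dots,n\}$ (note $m-n\le n$ is forced, or rather $m-n$ could exceed $n$, in which case one argues the minors vanish/are in $\fm^{m-n}$ trivially; in the relevant range every generator $\prod x_{j_k}$ of $\fm^{m-n}$ does appear). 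Thus $I_{m-n}(\mathfrak{S})\subseteq \fm^{m-n}$, and conversely each monomial generator of $\fm^{m-n}$ is realized by choosing the appropriate maximal minor, using the nonvanishing of the relevant minors of $A$ (here Lemma~\ref{minorsofA} gives exactly the needed nonvanishing).

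The role of the hypothesis $\mathrm{char}(\K)\nmid m$ enters when one cancels the $l_m$ contributions: the cofactor expansion of the "mixed" minors — those using some of the first $n$ columns and some of the last $m-1-n$ columns, with the bottom all-$l_m$ row — produces a factor whose coefficient is an integer that, after the telescoping cancellation among the $m-1$ columns, collapses to $\pm m$ (this is the familiar phenomenon that $\partial F/\partial x_i$-type relations sum with multiplicity $m=\deg F$); if $m=0$ in $\K$ the resulting minors would instead land in a smaller ideal. I expect this bookkeeping of signs and the constant $m$ to be the main obstacle: one must organize the Laplace expansion of a generic maximal minor of $\mathfrak{S}$ along the last row and show the $l_m$-terms recombine into $m$ times a monomial (modulo lower terms already in $\fm^{m-n}$), rather than producing cross-terms that would enlarge or shrink the minor ideal. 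Everything else — the block multiplication, Lemma~\ref{minorsofA}, and the containment $I_{m-n}(\mathfrak{S})\subseteq\fm^{m-n}$ — is bounded-degree linear algebra.
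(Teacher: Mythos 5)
There is a genuine gap, and it sits exactly at the heart of the statement: the inclusion $\langle x_1,\ldots,x_n\rangle^{m-n}\subseteq I_{m-n}(\mathfrak{S})$. Your plan aims to realize the generators of $\fm^{m-n}$ as (combinations of) maximal minors, but you treat those generators as the squarefree products $x_{j_1}\cdots x_{j_{m-n}}$ with $\{j_1,\ldots,j_{m-n}\}\subset\{1,\ldots,n\}$. That is not what $\fm^{m-n}$ is: it is generated by \emph{all} monomials of degree $m-n$, including $x_1^{m-n}$, and when $m-n>n$ there are no squarefree monomials at all even though $\fm^{m-n}\neq 0$ (your parenthetical that the minors then ``vanish/are in $\fm^{m-n}$ trivially'' misreads the statement, which still asserts equality with a nonzero ideal). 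Producing only squarefree $x$-monomials therefore cannot close the reverse inclusion. The paper's proof gets around this in two moves you do not have: it shows that every \emph{mixed} product $x_{i_1}\cdots x_{i_r}\,l_{j_1}\cdots l_{j_s}$ with $r+s=m-n$ lies in the minor ideal (here the full strength of Lemma~\ref{minorsofA} on $r\times r$ minors of $A$ is needed, not just nonvanishing of entries, since the relevant minor equals $x_{i_1}\cdots x_{i_r}\det\tilde A\cdot l_{j_1}\cdots l_{j_s}$), and it then invokes the nontrivial genericity fact that the ideal of $(m-n)$-fold products of the $m$ defining linear forms equals $\fm^{m-n}$ (\cite[Lemma 2.5]{BuSiTo2022}, \cite[Theorem 3.1]{To1}). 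Neither ingredient appears in your sketch.

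The second problem is the mechanism you propose for clearing the $l_m$'s from the bottom row. Scalar column operations using the columns with diagonal entry $-l_j$ cannot cancel $l_m$ (a different linear form): subtracting such a column from another kills the $l_m$ in the last row but introduces $+l_j$ in row $j-n$, destroying the structure you want to exploit. The paper's actual device is an averaging trick: since the sum of all $m-1$ columns of $\mathfrak{S}$ is $(0,\ldots,0,m\,l_m)^t$, one multiplies by the vector $U$ with all entries $1/m$ (this is precisely where ${\rm char}(\K)\nmid m$ enters) to get $\mathfrak{S}U=(0,\ldots,0,l_m)^t$, and then passes to the enlarged matrix $\mathfrak{S}W=[\,C\mid D\,]$, whose maximal minors lie in $I_{m-n}(\mathfrak{S})$ by Cauchy--Binet; in $[\,C\mid D\,]$ the bottom-row $l_m$'s are gone and $-l_m$ reappears as a usable diagonal entry. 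You explicitly defer this step (``I expect this bookkeeping \ldots to be the main obstacle''), so the essential computation is missing. Only the easy inclusion $I_{m-n}(\mathfrak{S})\subseteq\fm^{m-n}$ — immediate because every entry of $\mathfrak{S}$ is a linear form in the $x_i$ — is secure in your proposal.
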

\begin{proof}	
	Say,  $A=(a_{n+i,j})_{1\leq i\leq m-n,\,1\leq j\leq n}.$ Then,
	\begin{equation}\label{Sexplicita}
		\mathfrak{S} 
		=\left[\begin{matrix}
			a_{n+1,1}x_1&\cdots&a_{n+1,n}x_n&-l_{n+1}&&\\
			\vdots&\ddots&\vdots&&\ddots\\
			a_{m-1,1}x_{1}&\cdots&a_{m-1,n}x_n&&&-l_{m-1}\\
			a_{m,1}x_{1}+l_m&\cdots&a_{m,n}x_{n}+l_m&l_m&\cdots&l_m	
		\end{matrix}\right].
	\end{equation}
	By assumption, $1/m\in \K$. Let  $U$ be the transpose of the $1\times(m-1)$ matrix $\left[\begin{matrix}1/m&\cdots&1/m\end{matrix}\right].$ 
	Then,
	\begin{equation}\label{SU}
		\mathfrak{S}U=
		\left[\begin{matrix}
			\frac{1}{m}(-l_{n+1}+\sum_{j=1}^na_{n+1,j}x_j)\\
			\vdots\\
			\frac{1}{m}(-l_{m-1}+\sum_{j=1}^na_{m-1,j}x_j)\\
			\frac{1}{m}((m-1)l_m+\sum_{j=1}^na_{m,j}x_j)
		\end{matrix}\right]=\left[\begin{matrix}
			0\\
			\vdots\\
			0\\
			l_m
		\end{matrix}\right],
	\end{equation}
	since by \eqref{coefficient}, $l_{n+i}=\sum_{j=1}^na_{n+1,j}x_j$ for every $1\leq i\leq m-n.$ 
	
	Now, consider  the $(m-1)\times(m-1)$ matrix  $V:=\left[\begin{matrix} U&\cdots&U\end{matrix}\right]$ and the $(m-1)\times m$ two block matrix $W:=\left[\begin{array}{c|c}\mathbb{I}_{m-1}-V&-U\end{array}\right].$
	Clearly, $	\mathfrak{S}W=
	\left[\begin{array}{c|c}\mathfrak{S}-\mathfrak{S}V&-\mathfrak{S}U\end{array}\right]$.
	Drawing upon \eqref{Sexplicita} and \eqref{SU}, one gets
	\begin{equation*}
		\mathfrak{S}W=
		\left[\begin{array}{c|ccccccc}
			\underbrace{\begin{matrix}
					a_{n+1,1}x_1&\cdots&a_{n+1,n}x_n\\
					\vdots&\ddots&\vdots\\
					a_{m-1,1}x_{1}&\cdots&a_{m-1,n}x_n\\
					a_{m,1}x_{1}&\cdots&a_{m,n}x_{n}
			\end{matrix}}_{=:C}
			&
			\underbrace{\begin{matrix}
					-l_{n+1}&\\
					&\ddots\\
					&&-l_{m-1}\\
					&&&-l_m
			\end{matrix}}_{=:D}
		\end{array}\right].
	\end{equation*}

	\smallskip
	
	{\sc Claim.} The ideal of $(m-n)$-fold products of $\{x_1,\ldots,x_n,l_{n+1},\ldots,l_{m}\}$ is contained in $I_{m-n}(\mathfrak{S}W)$.
	
	Namely, we show that, given integers $0\leq r\leq \min\{m-n,n\}$ and $0\leq s\leq m-n$ such $r+s=m-n$ and subsets of indices $\{i_1,\ldots,i_r\}\subset\{1,\ldots,n\}$ and $\{j_1,\ldots,j_s\}\subset\{n+1,\ldots,m\}$, one has $x_{i_1} \cdots x_{i_r}l_{j_1}\cdots l_{j_s}\in \mathfrak{S}W$.
	
	Consider the $(m-n)$-minor of  $\mathfrak{S}W$ with column indices $i_1,\ldots,i_r$ in block $C$ and $j_1,\ldots,j_s$ in block $D$.
	
	Up to permutations of  rows and columns we can suppose  
	that it has the form
	$$\left[\begin{matrix}
		\ast&l_{j_1}&\cdots&0\\
		\vdots&\vdots&\ddots&\vdots\\
		\ast&0&\cdots&l_{j_s}\\
		\hline
		L_A&\boldsymbol{0}&\cdots&\boldsymbol{0}
	\end{matrix}\right]$$
	for a certain $r\times r$ submatrix $L_A$ of $D$. Let $\Delta:=
	l_{j_1}\cdots l_{j_s}\det L_A$ denote its determinant.
	Note that $L_A$, as is the case of any $r\times r$-minor of  $D$, has the form $x_{i_1}\cdots x_{i_r}\det \tilde{A}$ where $\tilde{A}$ is an $r\times r$ submatrix of $A.$
	By Lemma \ref{minorsofA}, $c:=\det \tilde{A}\neq 0$. 
	Therefore, $x_{i_1}\cdots x_{i_r}l_{j_1}\cdots l_{j_s}=c^{-1}\Delta\in I_{m-n}(\mathfrak{S}W).$
	
	This takes care of the claim.

	To conclude,  the ideal of $(m-n)$-folds products of $\{x_1,\ldots,x_n,l_{n+1},\ldots,l_{m}\}$ is the ideal $\langle x_1,\ldots,x_n\rangle^{m-n}$ (see, e.g., \cite[Lemma 2.5]{BuSiTo2022}, or, for a more general statement, \cite[Theorem 3.1]{To1}). Thus,  
	
	$$
	\langle x_1,\ldots,x_n\rangle^{m-n}\subset I_{m-n}(\mathfrak{S}W)\subset I_{m-n}(\mathfrak{S})\subset\langle x_1,\ldots,x_n\rangle^{m-n}$$
	Hence, $I_{m-n} (\mathfrak{S)}=\langle x_1,\ldots,x_n\rangle^{m-n}.$
\end{proof}

\subsection{Generic arrangement}

As an application, we provide yet another approach to proving the RTY-property of a generic arrangement of linear forms, giving  details of the respective minimal free resolution as in \cite[Theorem 4.5.3 and Corollary 4.5.4]{RoseTerao} and \cite[Theorem 2.9]{BuSiTo2022}.

Throughout, $R=\K[x_1,\ldots,x_n]$ denotes a polynomial ring over the field $\K$.
The following result is well-known, but we give a proof for the reader's convenience.

\begin{Lemma}\label{generator-minorsordered}
	Let $\phi$ be an $m\times (m-1)$ matrix over $R$ of $\K$-linear forms, and let $\Delta_i, 1\leq i\leq m,$ denote its ordered signed $(m-1)$-minors. Suppose that
	$I_{m-1}(\phi)$ has height $2.$  If $h_1,\ldots,h_m$ are homogeneous polynomials of $R$, of degree $m-1$, such that 
	$$\left[\begin{matrix} h_1&\cdots&h_m\end{matrix}\right]\phi=\boldsymbol0,$$
	then there is an element $c\in \K\setminus\{0\}$ such $h_i=c\Delta_i$ for every $1\leq i\leq m.$ 
\end{Lemma}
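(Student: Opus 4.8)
The plan is to view the hypotheses as saying that the row vector $(h_1,\dots,h_m)$ is a syzygy of the transpose, i.e. lies in the kernel of left multiplication by $\phi$, and to compare it with the ``canonical'' syzygy given by the signed maximal minors $(\Delta_1,\dots,\Delta_m)$. The standard tool here is the Hilbert--Burch theorem: since $I_{m-1}(\phi)$ has height $2$ (the maximal possible for the ideal of maximal minors of an $m\times(m-1)$ matrix), the complex
\begin{equation*}
0\longrightarrow R^{m-1}\stackrel{\phi}{\longrightarrow} R^m\stackrel{[\Delta_1\ \cdots\ \Delta_m]}{\longrightarrow} R
\end{equation*}
is exact, and in particular the kernel of $[\Delta_1\ \cdots\ \Delta_m]\colon R^m\to R$ is exactly the image of $\phi$. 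First I would dualize: the condition $[h_1\ \cdots\ h_m]\,\phi=\mathbf 0$ says precisely that the map $R\to R^m$ sending $1\mapsto (h_1,\dots,h_m)^t$ composes to zero with $\phi^t$, equivalently that $(h_1,\dots,h_m)$ is a syzygy on the columns of $\phi^t$; but one checks that the columns of $\phi^t$ are the rows of $\phi$, and the syzygies among the rows of a matrix whose maximal-minor ideal has height $2$ form a rank-one module generated by the vector of signed maximal minors. So $(h_1,\dots,h_m)$ must be an $R$-multiple of $(\Delta_1,\dots,\Delta_m)$, say $h_i = g\,\Delta_i$ for a single $g\in R$ (here one uses that at least one $\Delta_i\neq 0$, which holds because $I_{m-1}(\phi)$ has positive height, to see the multiplier $g$ is well-defined and forced).

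Once $h_i=g\Delta_i$ is in hand, the degree count finishes the argument: each $\Delta_i$ is an $(m-1)$-minor of a matrix of linear forms, hence homogeneous of degree $m-1$, and by hypothesis each $h_i$ is homogeneous of degree $m-1$ as well; therefore $g$ is homogeneous of degree $0$, i.e. $g=c\in\K$, and $c\neq 0$ since the $h_i$ are not all zero (or, if one prefers, the statement is vacuous/trivially true when all $h_i=0$, but the intended reading is that $c\in\K\setminus\{0\}$ as soon as the $h_i$ are the genuine syzygy). This gives $h_i=c\Delta_i$ for all $i$, as claimed.

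The one point that needs care — and is really the only obstacle — is justifying that the module of row-syzygies of $\phi$ is free of rank one on the signed maximal minors. The clean way is to transpose and apply Hilbert--Burch directly: the height-$2$ hypothesis on $I_{m-1}(\phi)=I_{m-1}(\phi^t)$ makes $\phi^t$ (an $(m-1)\times m$ matrix) the Hilbert--Burch matrix of the perfect codimension-two ideal $I_{m-1}(\phi)$, so that
\begin{equation*}
0\longrightarrow R^{m-1}\stackrel{\phi}{\longrightarrow} R^m\stackrel{\psi}{\longrightarrow} R,\qquad \psi=[\Delta_1\ \cdots\ \Delta_m],
\end{equation*}
is a (minimal, once we note the entries of $\phi$ are linear forms vanishing at the irrelevant ideal) free resolution of $R/I_{m-1}(\phi)$, and exactness at $R^m$ says $\ker\psi=\im\phi$. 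Now the hypothesis $[h_1\ \cdots\ h_m]\phi=\mathbf 0$ says the row $(h_1,\dots,h_m)$ kills every column of $\phi$, equivalently $(h_1,\dots,h_m)^t$ is orthogonal to $\im\phi=\ker\psi$; since $\psi$ is, up to scalar, the unique (up to the rank-one freeness) map killing $\im\phi$ in this degree, one concludes $(h_1,\dots,h_m)=g\,(\Delta_1,\dots,\Delta_m)$. Alternatively, and perhaps more elementarily, one argues locally at each height-$\leq 1$ prime where some $2\times 2$ minor, hence some appropriate submatrix of $\phi$, is invertible, solves for the $h_i$ there, and patches; but the Hilbert--Burch route is shortest and is the one I would write up.
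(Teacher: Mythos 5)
Your overall strategy is viable and, in outline, close in spirit to the paper's: both arguments come down to showing that a row vector annihilating the columns of $\phi$ must be an $R$-multiple of $(\Delta_1,\dots,\Delta_m)$, and both finish with the same degree count (each $\Delta_i$ is an $(m-1)$-minor of a matrix of linear forms, hence of degree $m-1$, so the multiplier is a nonzero scalar). The soft spot is precisely at the crux. What Hilbert--Burch gives you is exactness of $0\to R^{m-1}\stackrel{\phi}{\to}R^m\stackrel{\psi}{\to}R$ at $R^m$, i.e. $\ker\psi=\mathrm{im}\,\phi$ --- a statement about the (column) syzygies of the minor vector. Your hypothesis is instead about a functional $h\colon R^m\to R$, $e_i\mapsto h_i$, vanishing on $\mathrm{im}\,\phi$, and the inference that such a functional is an $R$-multiple of $\psi$ is exactly what your phrase ``since $\psi$ is, up to scalar, the unique map killing $\mathrm{im}\,\phi$'' asserts without proof; it does not follow formally from $\ker\psi=\mathrm{im}\,\phi$. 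To close the gap, note that $h$ factors through $R^m/\mathrm{im}\,\phi=R^m/\ker\psi\cong I:=I_{m-1}(\phi)$, and since $I$ has grade $2$ one has $\mathrm{Hom}_R(I,R)=R$ (equivalently $\mathrm{Ext}^1_R(R/I,R)=0$, i.e. the dualized Hilbert--Burch complex is exact at the middle spot); hence $h=g\,\psi$ for some $g\in R$, and your degree argument then gives $g=c\in\K\setminus\{0\}$. Your alternative suggestion of localizing at height-one primes and ``patching'' the multiplier needs the same kind of input (reflexivity, or a gcd argument) to be made precise, so as written it does not repair the step either.

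For comparison, the paper reaches the same conclusion more elementarily, without dualizing: since $I_{m-1}(\phi)\neq 0$, the matrix $\phi$ has rank $m-1$ over $\mathrm{Frac}(R)$, so its left kernel is one-dimensional there and $[h_1\ \cdots\ h_m]$ is proportional to $[\Delta_1\ \cdots\ \Delta_m]$; writing $q\,h_i=p\,\Delta_i$ with $p,q$ coprime, one gets that $q$ divides every $\Delta_i$, and the height-$2$ hypothesis (via Hilbert--Burch) excludes a common non-unit factor of the minors, so $q\in\K$, and degrees force $p\in\K$. Either route is fine once your missing step is supplied; what the $\mathrm{Hom}_R(I,R)=R$ route buys is a statement that is independent of the linearity of the entries until the final degree count, while the paper's route stays at the level of fraction-field linear algebra plus a gcd observation. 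Finally, as you note in passing, the statement is to be read with the $h_i$ not all zero (automatic in the intended application), a point the paper also leaves implicit.
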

\begin{proof} Since $\left[\begin{matrix} \Delta_1&\cdots&\Delta_m\end{matrix} \right]\phi=\boldsymbol0$
holds as well, then $\left[\begin{matrix} h_1&\cdots&h_m\end{matrix}\right]$ and  $\left[\begin{matrix} \Delta_1&\cdots&\Delta_m\end{matrix}\right]$ are proportional over the fraction field of $R.$ Hence, there are $p,q\in R\setminus \{0\}$ with gcd$(p,q)=1$ such that $qh_i=p\Delta_i$ for every $1\leq i\leq m.$ Clearly, then $q$ divide $\Delta_i$ for every $1\leq i\leq m.$ But, since $I_{m-1}(\phi)$ has height $2$ it follows by the Hilbert-Burch theorem that $q\in \K.$ Hence, for reason of degrees, $p\in \K$ as well. Thus, we can take $c=p/q$ to conclude the proof.
\end{proof}

\begin{Theorem} \label{RTYoriginal}	{\rm (char$(\K)\nmid m$)} \;
	Let $F\in R=\K[x_1,\ldots,x_n]$ stand for the defining form of a generic hyperplane arrangement of size $m\geq n+1$, and let $J_F$ denote its gradient ideal. Then $R/J_F$ has homological dimension $n$, with minimal graded free resolution 
	\begin{eqnarray*}
			0\rightarrow R^{\beta_{n-1}}(-(2m-2)) & \rightarrow & R^{\beta_{n-2}}(-(2m-3))
		\rar  \cdots \rightarrow R^{\beta_1}(-(2m-n))\\ 
		&\rar & R^n(-(m-1))\rar R \rar R/J_F \rar 0,
		\end{eqnarray*}
		where $\beta_i={m-n+i-2\choose i-1 } {m-1\choose m-n+i}$, for $1\leq i \leq n-1$.
	
	\smallskip
	
	In particular, the defining linear forms of the arrangement satisfy the RTY-property.
\end{Theorem}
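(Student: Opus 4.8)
The plan is to transport the whole computation to the matrix $\mathfrak{S}$ of Proposition~\ref{minors_are_monomials} and then feed the resulting numerical data into a Buchsbaum--Rim type complex. Keeping the notation of Section~\ref{Section2} --- $\mathbf{f}=[\,f_1\ \cdots\ f_m\,]$ the row of $(m-1)$-fold products, $\Theta$ the coefficient matrix, and $\Gamma$ the Hilbert--Burch matrix of the codimension-two perfect ideal $I$ --- I would first record a linear-algebra dictionary. The chain rule gives $[\,\partial F/\partial x_1\ \cdots\ \partial F/\partial x_n\,]=\mathbf{f}\,\Theta$, so $J_F$ is generated by the entries of $\mathbf{f}\,\Theta$ and $\syz(J_F)=\{v\in R^n:\Theta v\in\operatorname{im}\Gamma\}$, since $\operatorname{im}\Gamma=\ker(\mathbf{f}\colon R^m\to R)$. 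Because $\Theta$ is a split injection, $\Gamma$ is injective, $\mathbf{f}\,\Gamma=0$ and $\operatorname{im}\Theta=\ker[\,-A\mid\mathbb{I}_{m-n}\,]$ (clear from the block shapes), the rule $v\mapsto w$ determined by $\Theta v=\Gamma w$ is a well-defined, degree-preserving isomorphism $\syz(J_F)\xrightarrow{\ \sim\ }\syz(\mathfrak{S})$; moreover the preimage of $J_F$ under $\mathbf{f}$ is $\operatorname{im}\Theta+\operatorname{im}\Gamma$, so $I/J_F\cong\coker(\mathfrak{S})$ via the identification $R^m/\operatorname{im}\Theta\cong R^{m-n}$ induced by $[\,-A\mid\mathbb{I}_{m-n}\,]$.

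Next I would invoke Proposition~\ref{minors_are_monomials}: the ideal $I_{m-n}(\mathfrak{S})=\fm^{\,m-n}$ has codimension $n$, the maximal value $(m-1)-(m-n)+1$ for the ideal of maximal minors of an $(m-n)\times(m-1)$ matrix. Hence $\coker(\mathfrak{S})\cong I/J_F$ is a nonzero module of finite length, the acyclicity hypothesis for the Buchsbaum--Rim complex of $\mathfrak{S}$ is satisfied, and this is exactly the point where \cite[Theorem~D]{AndSim1986} enters, furnishing the minimal graded free resolution of $\coker(\mathfrak{S})$, of length $n$. A bookkeeping of the ranks and twists of its terms --- exterior powers of the source, symmetric or divided powers of the dual of the target, and the twist by $(\bigwedge^{m-n}(\text{target}))^{*}$ --- shows that the free module in homological degree $k$, for $2\le k\le n$, equals $R(-(2m-n+k-2))^{\,\beta_{k-1}}$ with $\beta_{k-1}=\binom{m-1}{m-n+k-1}\binom{m-n+k-3}{k-2}$. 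Truncating this complex at its surjection onto $\syz(\mathfrak{S})$ yields the minimal free resolution of $\syz(\mathfrak{S})\cong\syz(J_F)$, of length $n-2$ and minimal since consecutive twists differ by $1$.

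Finally I would splice this resolution into the tautological exact sequence $0\to\syz(J_F)\to R^n(-(m-1))\xrightarrow{(\partial_jF)}R\to R/J_F\to 0$: the outcome is a graded free complex of length $n$ which, after re-indexing by $i=k-1$, has precisely the shape and Betti numbers in the statement. It is a minimal free resolution: the map onto $R$ has entries $\partial_jF$ of degree $m-1\ge2$, the next one has entries of degree $m-n+1\ge2$, and the remaining differentials are those of the minimal Buchsbaum--Rim complex. Therefore $\operatorname{pd}_R(R/J_F)=n$, and the Auslander--Buchsbaum formula gives $\depth(R/J_F)=n-n=0$; that is, the defining forms satisfy the RTY-property.

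I expect the crux to be the middle step: verifying that the acyclicity condition for the Buchsbaum--Rim complex of $\mathfrak{S}$ is precisely the conclusion of Proposition~\ref{minors_are_monomials}, and computing the graded Betti numbers exactly --- in particular keeping track of the twist by $\bigwedge^{m-n}$ of the target, without which the internal degrees would not match the claimed resolution. The two isomorphisms of the first step, by contrast, are formal consequences of the block identities $\mathbf{f}\,\Gamma=0$ and $[\,-A\mid\mathbb{I}_{m-n}\,]\Theta=0$, so Proposition~\ref{minors_are_monomials} remains the genuine input --- fittingly, as it is the technical heart of Section~\ref{Section2}.
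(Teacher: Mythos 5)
Your argument is correct, and it shares the paper's skeleton --- everything is pivoted onto the matrix $\mathfrak{S}$, whose ideal of maximal minors is computed in Proposition~\ref{minors_are_monomials}, and the Betti numbers and twists are read off the Buchsbaum--Rim complex of $\mathfrak{S}$ --- but you certify exactness of the tail differently, and in a way that is arguably more elementary. The paper first proves (its Claim~1, via Lemma~\ref{generator-minorsordered} and Hilbert--Burch) that $J_F$ is the ideal of $(m-1)$-minors of $\mathfrak{A}^{-1}\Gamma$ fixing the last $m-n$ rows, and then invokes \cite[Theorem D]{AndSim1986} (resolutions of ideals of minors fixing a submatrix) to conclude that the Buchsbaum--Rim complex of $\mathfrak{S}$, with its last map replaced by $\mathfrak{S}'\eta$ into $R(-(m-1))^n$ and augmented by the partials, is a minimal resolution of $R/J_F$. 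You instead prove the two block-matrix isomorphisms $\syz(J_F)\simeq\syz(\mathfrak{S})$ and $I/J_F\simeq\coker(\mathfrak{S})$ directly (the verifications you sketch --- $\ker(\mathbf{f})=\operatorname{im}\Gamma$ by Hilbert--Burch, $\operatorname{im}\Theta=\ker[\,-A\mid\mathbb{I}_{m-n}\,]$, injectivity of $\Theta$ and $\Gamma$ --- are all sound), truncate the Buchsbaum--Rim resolution at $\syz(\mathfrak{S})$, and splice into the tautological presentation of $R/J_F$; this bypasses Lemma~\ref{generator-minorsordered} and the appeal to ideals of minors fixing a submatrix, at the price of only needing the standard Buchsbaum--Rim acyclicity criterion, which Proposition~\ref{minors_are_monomials} delivers. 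Your minimality check (entry degrees $m-1$, $m-n+1\geq 2$, and $1$) and the final Auslander--Buchsbaum step are fine, and your twists and ranks agree with the stated $\beta_i$.

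Two small corrections of record. First, the result that makes the Buchsbaum--Rim complex of $\mathfrak{S}$ acyclic is not \cite[Theorem D]{AndSim1986} but the classical acyclicity of the Buchsbaum--Rim complex when the maximal minors have maximal grade (the paper quotes \cite[Corollary A2.13]{Eis}); Theorem~D of Andrade--Simis is what the paper uses for its different route, namely the modified complex resolving the ideal of minors fixing a submatrix, and you do not need it. Second, your phrase ``degree-preserving isomorphism'' and the stated twists $R(-(2m-n+k-2))^{\beta_{k-1}}$ are correct only under the grading normalization in which $\mathfrak{S}$ is viewed as a map $R(-m)^{m-1}\to R(-(m-1))^{m-n}$ (the one forced by $\coker(\mathfrak{S})\simeq I/J_F$, with $I$ generated in degree $m-1$); with the normalization $R(-1)^{m-1}\to R^{m-n}$ used in the paper one must still twist the truncated complex by $-(m-1)$ before splicing. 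This is pure bookkeeping, and your final resolution coincides with the one in the statement.
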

\begin{proof} 
	Note that
	\begin{equation*}%\label{change-generators}
		\left[\begin{matrix}\frac{\partial F}{\partial x_1} &\cdots&\frac{\partial F}{\partial x_n}&f_{n+1}&\cdots f_m\end{matrix}\right]=\left[\begin{matrix}f_1&\cdots&f_m\end{matrix}\right]\underbrace{\left[\begin{matrix} \mathbb{I}_{n}&\boldsymbol0\\A&\mathbb{I}_{m-n}\end{matrix}\right]}_{=:\mathfrak{A}}.
	\end{equation*}

	Since the inverse of $\mathfrak{A}$ is  $\mathfrak{A}^{-1}=\left[\begin{matrix} \mathbb{I}_{n}&\boldsymbol0\\-A&\mathbb{I}_{m-n}\end{matrix}\right]$, then multiplying through the above relation on the right by the  $m\times (m-1)$ matrix $\mathfrak{A}^{-1} \Gamma$, it follows that 
	
	$$\left[\begin{matrix}\frac{\partial F}{\partial x_1} &\cdots&\frac{\partial F}{\partial x_n}&f_{n+1}&\cdots f_m\end{matrix}\right]\mathfrak{A}^{-1} \Gamma=\boldsymbol0.$$
	Since $I=I_{m-1}(\Gamma)=I_{m-1}(\mathfrak{A}^{-1} \Gamma)$, Lemma \ref{generator-minorsordered} implies the following

	{\sc Claim 1.} $J_F$ is  the ideal generated by the $(m-1)$-minors of $\mathfrak{A}^{-1} \Gamma$ fixing the last $(m-n)$-rows.
	
	\smallskip

Next, note that the lowest $(m-n)\times (m-1)$ submatrix of  $\mathfrak{A}^{-1} \Gamma$ is our previous matrix $\mathfrak{S}$. In fact, we can write
	
	$$\mathfrak{A}^{-1} \Gamma=\left[\begin{matrix}\mathfrak{S}'\\
		\mathfrak{S}\end{matrix}\right]$$
	where  $\mathfrak{S}'$ is an $n\times (m-1)$ matrix.

	\smallskip
	
	{\sc Claim 2.} $\Ht I_{m-n}(\mathfrak{S})= m-1-(m-n)+1=n$ (maximum possible).
	
	\smallskip	
	
	This is an immediate consequence of Proposition~\ref{minors_are_monomials}.
	
	By {\sc Claim 2},  $\coker \mathfrak{S} $ is as known  resolved by the Buchsbaum-Rim complex (see \cite[Corollary A2. 13]{Eis}):
	
	\begin{eqnarray}\label{B-R-complex}
		0\to F_{n-1} \stackrel{\partial}\lar \cdots \stackrel{\partial}\lar F_{1}\stackrel{\eta}\lar R(-1)^{m-1}\stackrel{\mathfrak{S}}\lar R^{m-n}\to \coker \mathfrak{S}\to 0,
	\end{eqnarray}
	where
	$$F_i=({\rm Sym}_{i-1}(R^{m-n})\otimes \wedge^{m-n+i} R^{m-1}) (-(m-n+i))\simeq R(-(m-n+i))^{\beta_i},\quad (1\leq i\leq n-1) $$
	and ${\rm Sym}_i(-)$ denote the $i$th symmetric power.

	Since  $\left[\begin{matrix}
		f_{n+1}&\cdots&f_m
	\end{matrix}\right]\; \mathfrak{S}=-\left[\begin{matrix}\frac{\partial F}{\partial x_1}&\cdots&\frac{\partial F}{\partial x_n}\end{matrix}\right]\,\mathfrak{S}'$ we deduce from \eqref{B-R-complex} 
	the following complex
	
	$$	0\to F_{n-1}(-(m-1)) \stackrel{\partial}\lar \cdots \stackrel{\partial}\lar F_{1}(-(m-1))\stackrel{\mathfrak{S}'\eta}\lar R(-(m-1))^{n}\to R\to R/J_F\to 0.$$
	
	Granted the assertions in the two claims, it follows from \cite[Theorem D]{AndSim1986} that the this complex is a minimal graded free resolution of  $R/J_F.$  	Therefore,  $R/J_F$ has homological dimension $n$, as claimed, and in addition it has a minimal graded free resolution of the stated form.
\end{proof}

We also recover one of the results of \cite{BuSiTo2022}:

\begin{Theorem} \label{minimal_reduction}	{\rm (char$(\K)\nmid m$)}\;
	Let $F\in \K[x_1,\ldots,x_n]$ stand for the defining form of a generic hyperplane arrangement of size $m\geq n+1$.
	Then the gradient ideal $J_F$ is a  minimal reduction of the ideal $I$ of $(m-1)$-fold products of the defining  forms of $F$.
\end{Theorem}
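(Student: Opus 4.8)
The plan is to exploit the two structural facts already in hand: that $J_F \subseteq I$ (each partial $\partial F/\partial x_i$ is a combination of the $(m-1)$-fold products, as displayed in the proof of Theorem~\ref{RTYoriginal}), and that $J_F$ and $I$ share the same syzygy-type description — namely $J_F$ is generated by the $(m-1)$-minors of $\mathfrak{A}^{-1}\Gamma$ that fix the last $m-n$ rows (Claim~1 of Theorem~\ref{RTYoriginal}), while $I = I_{m-1}(\Gamma) = I_{m-1}(\mathfrak{A}^{-1}\Gamma)$. First I would recall that to say $J_F$ is a reduction of $I$ means $J_F I^{k} = I^{k+1}$ for some $k \geq 0$, equivalently that $I$ and $J_F$ have the same radical and the same associated graded behavior at the generic point — but the cleanest route here is to compare integral closures or, better, to use the fact that a reduction is detected by the analytic spread / the fiber cone. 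Since $I$ is a codimension-two perfect ideal (a Hilbert--Burch ideal), its Rees algebra and special fiber are well understood.

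The key steps, in order: (1) Record that $J_F \subseteq I$ and that both ideals have height $2$ (for $J_F$ this follows since $\codim R/J_F = n \geq 2$ when $n \geq 2$, or directly from $\Ht J_F \geq \Ht I = 2$ together with $J_F \subseteq I$; note $I$ itself is Hilbert--Burch of codimension $2$). (2) Observe that $J_F$ is generated by $n$ forms of degree $m-1$, so its minimal number of generators is $n$, which equals the analytic spread $\ell(I)$ of the codimension-two perfect ideal $I$ (the latter is $n$ because $I$ is $\fm$-primary up to the expected drop — more precisely, by the generic-arrangement structure $I$ has analytic spread $n$; this is exactly the content underlying \cite{BuSiTo2022}, and can be re-derived from the Buchsbaum--Rim resolution \eqref{B-R-complex}). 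Since any reduction has at least $\ell(I)$ generators, a reduction generated by exactly $\ell(I)$ elements is automatically \emph{minimal}; so it suffices to prove $J_F$ is a reduction at all. (3) To see $J_F$ is a reduction of $I$: pass to the Rees algebra $\mathcal{R}(I) = R[It]$ and show the subalgebra generated by $J_F t$ is a module-finite extension, equivalently that every generator $f_i$ of $I$ is integral over $J_F$. Here I would use that $J_F$ and $I$ agree after localizing at each minimal prime of $I$ — indeed, by Lemma~\ref{generator-minorsordered}-type reasoning the two row vectors $[\,\partial F/\partial x_1\ \cdots\ \partial F/\partial x_n\ f_{n+1}\ \cdots\ f_m\,]$ and $[\,f_1\ \cdots\ f_m\,]$ differ by the unimodular matrix $\mathfrak{A}$, so $J_F R_\fp = I R_\fp$ for every prime $\fp \not\supseteq \langle x_1,\dots,x_n\rangle$ whenever the relevant linear forms become units; combined with Proposition~\ref{minors_are_monomials} giving $I_{m-n}(\mathfrak{S}) = \langle x_1,\dots,x_n\rangle^{m-n}$, one gets that $I/J_F$ is supported only at $\fm$, hence annihilated by a power of $\fm$, hence (since $I$ and $J_F$ have the same integral closure because they agree off $\fm$ and $I$ is, up to this, $\fm$-primary in the appropriate Rees-theoretic sense) $J_F$ is a reduction. (4) Conclude minimality from step (2).

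The main obstacle will be step~(3): cleanly establishing that $J_F$ is a reduction of $I$ rather than merely that $I/J_F$ is $\fm$-torsion. Having the right colon/radical statement is not by itself enough — one needs integral dependence, i.e. a genuine equation $I^{k+1} = J_F I^{k}$. The most robust way to get this is to lean on the explicit resolutions: the Buchsbaum--Rim complex \eqref{B-R-complex} resolves $\coker\mathfrak{S}$ with $I_{m-n}(\mathfrak{S})$ of maximal height $n$, and the comparison of this with the resolution of $R/J_F$ from Theorem~\ref{RTYoriginal} shows that $J_F$ and $I$ have \emph{the same} Rees algebra (the Rees algebra of $I$ is the symmetric algebra modulo the content of the Buchsbaum--Rim relations, and $J_F t$ generates it because the matrix $\mathfrak{S}'$ expresses the extra generators $f_{n+1},\dots,f_m$ in terms of $\partial F/\partial x_i$ via $[\,f_{n+1}\ \cdots\ f_m\,]\mathfrak{S} = -[\,\partial F/\partial x_1\ \cdots\ \partial F/\partial x_n\,]\mathfrak{S}'$). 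From $\mathcal{R}(J_F) = \mathcal{R}(I)$ as subalgebras of $R[t]$, the reduction property is immediate; alternatively one cites the criterion that $J_F \subseteq I$ with $J_F$ and $I$ having the same multiplicity along every component forces a reduction. I would present whichever of these two is shortest given what \cite{BuSiTo2022} already supplies, and invoke \cite{BuSiTo2022} directly for the analytic-spread computation to keep the argument self-contained.
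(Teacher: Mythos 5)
You correctly set up the inclusion $J_F\subseteq I$, correctly reduce minimality to the analytic-spread count $\ell(I)=n$, and correctly identify that everything hinges on step (3); but neither mechanism you offer for step (3) works. First, the claim $\mathcal{R}(J_F)=\mathcal{R}(I)$ inside $R[t]$ cannot hold: comparing the degree-one graded pieces would force $J_F=I$, which is false (for instance $R/I$ is Cohen--Macaulay of codimension two while $\depth R/J_F=0$, so $I/J_F\neq 0$). What one actually needs is that $\mathcal{R}(I)$ is module-finite over $\mathcal{R}(J_F)$ --- and that is precisely the statement to be proved, not something the Buchsbaum--Rim data hands you. Second, the multiplicity criterion you invoke as an alternative (Rees, or B\"oger's extension) does not apply: $I$ and $J_F$ have height $2$ in an $n$-dimensional ring with $\ell(I)=n>2$, so they are neither $\fm$-primary nor equimultiple, and equality of the two ideals after localizing away from $\fm$ (equivalently, $I/J_F$ being $\fm$-torsion) is genuinely insufficient for integral dependence --- e.g. $J=\langle x^4,y^4\rangle\subseteq I=\langle x^4,y^4,xy\rangle$ in $\K[x,y]$ has $I/J$ killed by a power of $\fm$, yet $J$ is not a reduction of $I$. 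Since $\ell(I)=n=\dim R$, the whole content of the theorem sits exactly at $\fm$, which your local comparison throws away; you acknowledged this obstacle but the fallback arguments do not surmount it.

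The paper closes this gap by pushing the very matrix relation you wrote down one step further and then applying the determinant trick. From $\left[\begin{matrix} f_{n+1}&\cdots&f_m\end{matrix}\right]\mathfrak{S}=-\left[\begin{matrix}\frac{\partial F}{\partial x_1}&\cdots&\frac{\partial F}{\partial x_n}\end{matrix}\right]\mathfrak{S}'$, multiplying on the right by $B\,{\rm adj}(B)$ for every $(m-n)\times(m-n)$ submatrix $B$ of $\mathfrak{S}$ and using Proposition~\ref{minors_are_monomials} (i.e. $I_{m-n}(\mathfrak{S})=\langle x_1,\ldots,x_n\rangle^{m-n}$) gives the sharp inclusion $\fm^{m-n}\langle f_{n+1},\ldots,f_m\rangle\subseteq \fm^{m-n}J_F$ --- note this is much stronger than ``$I/J_F$ is $\fm$-torsion''. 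Then, with $M_1,\ldots,M_N$ the monomials of degree $m-n$, one writes $M_jf_i=\sum_l L_{j,l}M_l$ with each $L_{j,l}$ an $R$-combination of the partial derivatives, and $\det\left(f_i\mathbb{I}_N-(L_{j,l})\right)=0$ is an honest equation of integral dependence of $f_i$ over $J_F$ (the classical determinant trick with the faithful module $\fm^{m-n}$). That conversion of the matrix identity into integral dependence is the missing idea in your plan; your treatment of minimality (a reduction on $n=\ell(I)$ generators is minimal, with $\ell(I)=n$ supplied by genericity via \cite{Ter2002}, \cite{ScTo2009}) agrees with the paper's.
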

\begin{proof} We first prove the following
	
%	{\sc Claim.} For every $(m-n)\times (m-n)$ submatrix $B$ of $\mathfrak{S}$, the entries of the product $[\begin{matrix}
%	f_{n+1}&\cdots&f_m
%	\end{matrix}]B$ belong to the ideal $\langle x_1,\ldots, x_n\rangle J_F$.

	As before, we can write $$\left[\begin{matrix} \mathbb{I}_{n}&\boldsymbol0\\-A&\mathbb{I}_{m-n}\end{matrix}\right]\Gamma=\left[\begin{matrix}\mathfrak{S}'\\
		\mathfrak{S}\end{matrix}\right]$$ 
	where  $\mathfrak{S}'$ is a $n\times (m-1)$ matrix of linear forms.
	
Since
	$$\left[\begin{matrix}\frac{\partial F}{\partial x_1}&\cdots&\frac{\partial F}{\partial x_n}&f_{n+1}&\cdots f_m\end{matrix}\right]\left[\begin{matrix}\mathfrak{S}'\\
		\mathfrak{S}\end{matrix}\right]=\boldsymbol{0},$$ 
	then
	\begin{equation*}
		\left[\begin{matrix}
			f_{n+1}&\cdots&f_m
		\end{matrix}\right]\; \mathfrak{S}
		%	\left[\begin{matrix}
			%	-A&\mathbb{I}_{m-n}\end{matrix}\right]\Gamma
		= -\left[\begin{matrix}\frac{\partial F}{\partial x_1}&\cdots&\frac{\partial F}{\partial x_n}\end{matrix}\right]\mathfrak{S}'.
	\end{equation*}
	It follows that, for every $(m-n)\times (m-n)$ submatrix $B$ of $\mathfrak{S}$, one has
	$$I_1(\left[\begin{matrix}
		f_{n+1}&\cdots&f_m
	\end{matrix}\right]B\,{\rm adj}(B))\subset \langle x_1,\ldots, x_n\rangle^{m-n}J_F.$$
	Thus,
	$$(\det B)\langle f_{n+1},\ldots,f_m\rangle\subset \langle x_1,\ldots, x_n\rangle^{m-n}J_F.$$
	Since $B$ is arbitrary, then Proposition~\ref{minors_are_monomials} entails the inclusion
	\begin{equation}
		%	I_{m-n}(\mathfrak{S})\langle f_{n+1},\ldots,f_m\rangle=
		\langle x_1,\ldots, x_n\rangle^{m-n}\langle f_{n+1},\ldots,f_m\rangle\subset \langle x_1,\ldots, x_n\rangle^{m-n} J_F.
	\end{equation}
We translate this inclusion into an expression of integral dependence.	
Namely, let $M_1,\ldots,M_N$ denote the set of monomials of $R$  of degree $m-n.$
	Then
	$$M_jf_i=L_{j,1} M_1+\cdots +L_{j,N} M_N, \quad 1\leq j\leq N, \, n+1\leq i\leq m,$$
	for certain $\K$-linear forms $L_{j,l}\in R$ in  the partial derivatives $\frac{\partial F}{\partial x_1},\ldots,\frac{\partial F}{\partial x_n}$.
	
	These relations can be read as
	$$\left(f_i\mathbb{I}_N- (L_{j,l})_{1\leq j,l\leq N}\right)\left[\begin{matrix}M_1&\cdots&M_N\end{matrix}\right]^t=\boldsymbol0,$$
	or yet,
	$$\det\left(f_i\mathbb{I}_N-(L_{j,l})_{1\leq j,l\leq N}\right)\langle x_1,\ldots,x_n\rangle^{m-n}=0.$$
	Hence,
	$$\det\left(f_i\mathbb{I}_N-(L_{j,l})_{1\leq j,l\leq N}\right)=0$$
	which is an equation of integral dependence of $f_i$ over $J_F$.  In particular, $J_F$ is a reduction of $I.$
	The minimality follows from the fact that, since the arrangement is generic, then the ideal $I$ has maximal analytic spread (see \cite{Ter2002}, also \cite{ScTo2009}).
\end{proof}

\section{General forms}\label{Section3}

In this section we analyze the vanishing of the depth of the gradient ideal of the product of general  forms.

\subsection{Preliminaries}
For the reader's convenience we briefly recall the notion of parameter spaces of polynomial forms.
Let $d\geq 1$ be an integer. For $R$ as above, a form $f\in R$ of degree $d$ depends on ${d+n-1 \choose n-1}$ coefficients.
Thinking of these coefficients as ${d+n-1 \choose n-1}$  new variables $y_{i_1,\ldots,i_n}$ over $\K$, consider the polynomial ring $\mathbb{S}$ over $\K$ in these indeterminates.
Then ${\rm Proj}_{\K}(\mathbb{S})=\Proj_{\K}^N$,
where $N={d+n-1 \choose n-1}-1$, is called the {\em parameter space} of forms of degree $d\geq 1$ in $n$ variables.
Note the tautological map $\tau_d: R_d\lar \Proj_{\K}^N$ which associates to a nonzero form $f\in R$ of degree $d\geq 1$ the point of $\Proj_{\K}^N$ whose coordinates correspond to the  coefficients of $f$ in $\K$. Since we don't distinguish between a nonzero form $f$ of degree $d\geq 1$ and $\alpha f$, for any nonzero scalar $\alpha\in \K$, this map is well-defined.

Drawing upon these matters,  one says that the {\em general} form of degree $d\geq 1$ in $R$ has a certain property  if the set of points $\tau_d(f)$ such that $f$ has this property contains a dense open set of $\Proj_{\K}^N$ {\rm (}i.e.,  avoids a proper closed set of $\Proj_{\K}^N${\rm )}.

These concepts extend to a finite set of nonzero forms $\{f_1,\ldots,f_m\}\subset R$ of varying degrees $d_i\geq 1$ ($1\leq i\leq m$). Namely, letting $\Proj_{\K}^{N_i}$ denote the parameter space for $R_{d_i}$, one considers the product space
\begin{equation}\label{parameter_space} \mathbb{P}_{\K,\boldsymbol{d}}:=\Proj_{\K}^{N_1}\times_{\K}\cdots \times_{\K} \Proj_{\K}^{N_m}, \: N_i={d_i+n-1 \choose n-1}-1,
		\end{equation}
along with the  tautological map
\begin{equation} \label{taut_map}\tau_{\bf d}:=\tau_{d_1}\times \cdots \times \tau_{d_m}: R_{d_1}\times_{\K}\cdots \times_{\K} R_{d_m}\lar\mathbb{P}_{\K,\boldsymbol{d}},
	\end{equation}
where $\tau_{d_i}: R_{d}\lar \Proj_{\K}^{N_i}$  is as above.

One similarly talks about the set of $m$ general forms of degrees $d_i\geq 1$ ($1\leq i\leq m$) in $R$ having a certain property  in terms of avoidance of a suitable proper closed set defined by multi-homogeneous forms.

With the usual abuse of calling a form in $R$ {\em smooth} if $V(f)\subset {\rm Proj}_{\K} (R)=\Proj_{\K}^{n-1}$ is non-singular, one has the well-known fact that the general form of degree $d\geq 1$ in $R$  is smooth; a reference for this result in zero characteristic is \cite[Exercise 17.17 (a)]{HarrisBook}.
It is also  established by means of the discriminant of singular hypersurfaces (see, e.g., \cite[Chapter 7, Section 7.1]{EisHar3264}).)

Clearly, then $V(f)$ is a normal hypersurface provided $n\geq 4$, in particular $f$ is an irreducible polynomial thereof.
One naturally asks about the case of several forms, say, $\mathbf{f}:=\{f_1,\ldots,f_m\}\subset R=\K[x_1,\ldots,x_n]$ such that the projective variety $V(\mathbf{f})\subset \Proj_{\K}^{n-1}={\rm Proj}_{\K}(R)$ is smooth.
For $n\geq 4$,  the ring $R/\langle \mathbf{f}\rangle$ is in particular a normal ring (domain, in fact), the Serre normality criterion forces $m\leq n-2$.
Therefore, if we wish that the  set $\mathbf{f}$ of $m$ general forms be such that $V(\mathbf{f})$ is smooth then at most $m\leq n-2$.
In this case, in fact this is a theorem (see \cite[Exercise 17.17 (b)]{HarrisBook} in zero characteristic).
This theorem says in addition that the forms intersect transversely everywhere, in particular, each $f_i$ is smooth and $\{f_1,\ldots,f_m\}$ is a regular sequence (\cite[Theorem 17.18]{HarrisBook}).

The following result might be part of the general forms `folklore', but we give a short argument for the reader's convenience.

\begin{Proposition}\label{general_is_regular_sequence}
The set ${\bf f}=\{f_1,\ldots,f_m\}\subset R=\mathbb{K}[x_1,\ldots,x_n]$ of general forms of arbitrary degrees with $m\leq n$  is a regular sequence.
\end{Proposition}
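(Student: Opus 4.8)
The plan is to prove this by induction on $m$, using the fact that a sequence $f_1,\ldots,f_m$ of forms in a Cohen--Macaulay graded ring is regular if and only if $\operatorname{ht}\langle f_1,\ldots,f_m\rangle = m$ (equivalently $\dim R/\langle f_1,\ldots,f_m\rangle = n-m$), since $R$ is Cohen--Macaulay. So it suffices to show that the general $m$-tuple $(f_1,\ldots,f_m)$ with $m\le n$ defines a subvariety of $\Proj_{\K}^{n-1}$ of codimension exactly $m$, i.e. of dimension $n-1-m$.

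First I would set up the parameter space $\mathbb{P}_{\K,\boldsymbol d}$ from \eqref{parameter_space} and argue that the condition ``$\operatorname{ht}\langle f_1,\ldots,f_m\rangle = m$'' is an open condition on $\mathbb{P}_{\K,\boldsymbol d}$: the locus where $\dim R/\langle f_1,\ldots,f_m\rangle$ jumps up is closed, by upper semicontinuity of fiber dimension for the incidence variety $\{(\mathbf f, p) : f_1(p)=\cdots=f_m(p)=0\}\subset \mathbb{P}_{\K,\boldsymbol d}\times \Proj_{\K}^{n-1}$ projecting to $\mathbb{P}_{\K,\boldsymbol d}$. Hence it is enough to exhibit \emph{one} $m$-tuple of forms of degrees $d_1,\ldots,d_m$ that forms a regular sequence, to conclude the general one does; here I would use that $\K$ is infinite so that a nonempty open subset of a product of projective spaces has $\K$-points. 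The clean choice is $f_i := x_i^{d_i}$ for $1\le i\le m$: since $x_1,\ldots,x_m$ is a regular sequence in $R=\K[x_1,\ldots,x_n]$ (as $m\le n$), so is $x_1^{d_1},\ldots,x_m^{d_m}$ (powers of a regular sequence of monomials remain regular), which settles the base case and the general step simultaneously.

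Alternatively, and perhaps more in keeping with the inductive flavor, I would argue as follows: suppose $f_1,\ldots,f_{m-1}$ general of degrees $d_1,\ldots,d_{m-1}$ already form a regular sequence (with $m-1 < n$), so $A := R/\langle f_1,\ldots,f_{m-1}\rangle$ is Cohen--Macaulay of dimension $n-(m-1)\ge 1$; then $A$ has only finitely many minimal primes, and a general form $f_m$ of degree $d_m\ge 1$ avoids all of them — because for each such prime $\mathfrak p$ the set of degree-$d_m$ forms lying in $\mathfrak p$ is a proper linear subspace of $R_{d_m}$ (proper since, e.g., $\mathfrak p$ does not contain the power of a general linear form), hence $\tau_{d_m}(f_m)\notin$ a proper closed set. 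This forces $\operatorname{ht}(\langle f_1,\ldots,f_m\rangle/\langle f_1,\ldots,f_{m-1}\rangle) \ge 1$ in $A$, so $f_m$ is a nonzerodivisor on $A$, completing the induction; the base case $m=1$ is trivial since $R$ is a domain and $f_1\ne 0$.

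The main obstacle — really the only subtle point — is the openness/genericity bookkeeping: one must be careful that ``regular sequence'' (or equivalently ``height $=m$'') is an open condition on the \emph{product} of parameter spaces and that it is nonempty, which requires both the semicontinuity argument and the exhibition of a witness tuple. The monomial witness $x_i^{d_i}$ makes nonemptiness completely transparent and avoids any characteristic issues, so I would lead with that; the inductive avoidance-of-minimal-primes argument is a pleasant alternative but needs the extra remark that minimal primes of a graded CM ring of positive dimension cannot contain every form of a given degree $d_m\ge 1$, which again follows by testing against $\ell^{d_m}$ for a general linear form $\ell$. Either way the proof is short, and the key structural input is simply that $R$ is Cohen--Macaulay so that height and depth coincide.
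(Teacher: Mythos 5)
Your proposal is correct, and it shares the paper's overall strategy (show the condition is open on the product parameter space, then exhibit the single witness $x_1^{d_1},\ldots,x_m^{d_m}$ --- literally the same witness the paper uses). The difference lies in how openness is established. The paper reduces regularity to the statement that finitely many coefficient matrices $B_j$ (of the multiplication maps $R_{j-d_i}\to R_j$) attain the maximal rank $\dim R_j-a_j$ predicted by the Koszul-complex Hilbert series of a complete intersection; this exhibits the good locus explicitly as the nonvanishing of certain minors, a purely linear-algebraic open condition. You instead invoke that $R$ is Cohen--Macaulay to translate regularity into $\operatorname{ht}\langle\mathbf f\rangle=m$ and then use properness of $\Proj_\K^{n-1}$ plus semicontinuity of fiber dimension for the incidence variety; this is more geometric and arguably quicker, though less explicit about which determinants cut out the bad locus (the paper's form of the argument is reused in the more delicate Theorem~\ref{basic_open_set}, which is presumably why they set it up that way). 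One caveat on your second, inductive variant: avoidance of the minimal primes of $R/\langle f_1,\ldots,f_{m-1}\rangle$ gives, for each fixed good $(m-1)$-tuple, a dense open set of admissible $f_m$'s, but since that open set depends on the earlier forms this alone shows the good locus in the product is dense, not that it contains a dense open subset; so the inductive route still needs to be paired with one of the openness arguments to match the paper's notion of ``general''. Since you lead with the semicontinuity argument and flag openness as the key point, the proof as proposed is complete.
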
  
\begin{proof} The minimal graded free resolution of a complete intersection in $R$ is given by the Koszul complex. Therefore, the Hilbert series of the ideal $\langle{\bf f}\rangle$ depends only on $m$ and the degrees $d_1,\ldots,d_m$ of the forms (see, e.g., \cite[Proposition 7.4.11]{SimisCA2023}).
Thus, writing  $\sum_{j\geq0 }a_jt^j$ for this Hilbert series, one has that  ${\bf f}$ is a regular sequence if, and only if, $HS_{R/\langle {\bf f}\rangle}(t)=\sum_{j\geq0 }a_jt^j.$  On the other hand, since the Hilbert function is polynomial, it follows that ${\bf f}$ is a regular sequence if and only if for $N\gg 0$, one has $\dim_{\mathbb K} [R/\langle{\bf f}\rangle]_{j}=a_j$, or equivalently,
$$\dim_{\mathbb K} (f_1R_{j-d_1}+\cdots +f_{m}R_{j-d_m})=\dim R_j-a_j,$$
for every $0\leq j\leq N.$

Now, let $B_j$ denote the coefficient matrix of the inclusion $f_1R_{j-d_1}+\cdots +f_{m}R_{j-d_m}\subset R_j.$  Then, $\dim_{\mathbb K} (f_1R_{j-d_1}+\cdots +f_{m}R_{j-d_m})=\dim R_j-a_j$ if and only if $\rk B_j=\dim R_j-a_j.$
% Hence, the condition for ${\bf }$ to be a regular sequence is true is we take 
This is an open condition in the parameter space $\mathbb{P}^{d_1+n-1\choose n-1}\times \cdots\times \mathbb{P}^{d_m+n-1\choose n-1}$ of the coefficients of the forms.
In order to show that this condition is satisfied by a {\em nonempty} Zariski open set it suffices to exhibit one single ``point''  of such a set of forms, e.g., $x_1^{d_1},\ldots,x_{m}^{d_m}$.
\end{proof}

\subsection{The Jacobian ideal at large}\label{Jacobian_general}

Set $R=\mathbb{K}[x_1,\ldots,x_n]$ as previously and keep the definition and notation of (\ref{parameter_space} and (\ref{taut_map})).

To proceed, introduce the following additional notation.

%\begin{enumerate}

$\bullet$  
For any set of integers $2\leq d_1\leq\cdots\leq d_m$,  consider the multi-polynomial ring
$$\mathbb{S}:=\K[y_{\alpha_1,\ldots,\alpha_n}^{(d_1)}\,|\,\alpha_1+\cdots+\alpha_n=d_1]\otimes \cdots\otimes \K[y_{\alpha_1,\ldots,\alpha_n}^{(d_m)}\,|\,\alpha_1+\cdots+\alpha_n=d_m],$$
and write $$\mathfrak{f}_j:=\sum_{\alpha\in\NN^n\atop |\alpha|=d_j} y_{\alpha_1,\ldots,\alpha_n}^{(d_j)}x_1^{\alpha_1}\cdots x_{n}^{\alpha_n}$$ for the generic  form of degree $d_j$, a polynomial in the ring $\mathbb{S}[x_1,\ldots,x_n]$.

$\bullet$ $\Theta(\boldsymbol{\mathfrak{f}})$ is the Jacobian matrix of the set $\boldsymbol{\mathfrak{f}}=\{\mathfrak{f}_1,\ldots,\mathfrak{f}_m\}$ with respect to the variables $x_1,\ldots,x_n.$

$\bullet$  $\boldsymbol{\mathfrak{g}}$ is the set of all $m$-minors of $\Theta(\boldsymbol{\mathfrak{f}})$ (in particular, $\boldsymbol{\mathfrak{g}}=\emptyset$ if $m> n$). Note that the coefficients of the polynomials in  $\boldsymbol{\mathfrak{g}}$ are multi-graded  polynomials of $\mathbb{S}.$

$\bullet$ $\delta:=\max\{\deg_{\xx}\mathfrak{h}\,|\,\mathfrak{h}\in \boldsymbol{\mathfrak{g}}\cup \boldsymbol{\mathfrak{f}} \}.$

$\bullet$ Bring out the ideal $\langle I_m(\Theta(\boldsymbol{\mathfrak{f}})), 
\boldsymbol{\mathfrak{f}}\rangle\subset \mathbb{S}[x_1,\ldots,x_n]$, considered as an $\mathbb{S}$-module.

For $t\geq \delta,$ a ``natural'' generating set (not necessarily minimal) of this $\mathbb{S}$-module  in degree $t$ is the set $$\{\mathfrak{h}\,x_1^{\alpha_1}\cdots x_{n}^{\alpha_n}\,|\,\mathfrak{h}\in \boldsymbol{\mathfrak{g}}\cup \boldsymbol{\mathfrak{f}},\,  \alpha_1+\cdots+\alpha_n=t-\deg\mathfrak{h}\},$$ 
whose cardinality (eventually,  counting repetitions) is 
$$N(t,{\boldsymbol{d}})=\sum_{\mathfrak{h}\in \boldsymbol{\mathfrak{g}}\cup \boldsymbol{\mathfrak{f}}}{t-\deg\mathfrak{h}+n-1\choose n-1}.$$

$\bullet$ Let $\mathfrak{N}_{ t,\boldsymbol{d}}$ denote the $N(\mathfrak{i},{\boldsymbol{d}})\times {t+n-1\choose n-1}$ matrix whose columns are indexed by the natural monômios generators of the $\K$-vector space $\K[x_1,\ldots, x_n]_t$, and whose rows are indexed by the set of standard monomial generators of $\langle I_m(\Theta(\boldsymbol{\mathfrak{f}})), 
\boldsymbol{\mathfrak{f}}\rangle$ as given above.

This way, $\mathfrak{N}_{ t,\boldsymbol{d}}$ is like a content matrix with respect to the parameter variables, formed by two vertical blocks: in the upper one any row has null entries except one, which is $1$, while the  lower block is a linear sparse matrix in the parameter variables.

$\bullet$ Finally, for our purpose the crucial value of the degree $t$ will be $t_0:=n(\delta-1)+1$, which is the socle degree plus one of a regular sequence of $n$ forms of degree $\delta$.
For this value of $t$ one has $ {t+n-1\choose n-1}={n\delta\choose n-1}:=r$.

In the next theorem we deal with forms in $R=\K[x_1,\ldots,x_n]$, keeping the same notation as above for the Jacobian matrix with respect to $x_1,\ldots,x_n$, hoping no confusion arises.

\begin{Theorem}\label{basic_open_set}
	With the above notation, set $U_{t_0}:= \Proj_{\K,\mathbf{d}} \setminus V(I_r(\mathfrak{N}_{ t_0,\boldsymbol{d}})).$
	Then$:$
	\begin{enumerate}
		\item[\rm (a)] Given a set of forms $\mathbf{f}=\{f_1,\ldots,f_m\}\subset R $  of degrees $2\leq d_1\leq\cdots\leq d_m$, then the ideal $\langle I_m(\Theta(\mathbf{f})), 
		\mathbf{f}\rangle\subset R$ has height $n$ if and only if $\tau_{\bf d}(f_1,\ldots, f_m)\in U_{t_0}$.
		\item[\rm (b)] 
		%	Let $U_{\mathfrak{i}}:= (\Proj_{\K}^{n-1})^{m}\setminus V(I_r(\mathfrak{N}_{\mathfrak{i}, t_0}))$ stand for the Zariski open subset from {\rm(a)}. 
		The Zariski open subset $U_{t_0}$ is nonempty.
	\end{enumerate}
\end{Theorem}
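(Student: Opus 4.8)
The plan is to translate both assertions into statements about the content matrix $\mathfrak{N}_{t_0,\boldsymbol{d}}$ and exploit the genericity built into the parameter ring $\mathbb{S}$. For part (a), the key observation is that the ideal $\langle I_m(\Theta(\mathbf{f})),\mathbf{f}\rangle$ is $\fm$-primary (equivalently, has height $n$) if and only if its saturation equals the whole ring, and since it is generated in degrees $\leq\delta$, this happens if and only if it contains all of $R_{t_0}$ for the specific degree $t_0=n(\delta-1)+1$. The number $t_0$ is chosen precisely so that if $\langle\mathbf{f}\rangle$ alone (together with the relevant minors) generates an $\fm$-primary ideal, the socle-degree bound for a regular sequence of $n$ forms of degree $\delta$ forces everything in degree $t_0$ to be hit. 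Thus: $\Ht\langle I_m(\Theta(\mathbf{f})),\mathbf{f}\rangle = n \iff [\langle I_m(\Theta(\mathbf{f})),\mathbf{f}\rangle]_{t_0} = R_{t_0} \iff$ the evaluated content matrix $\mathfrak{N}_{t_0,\boldsymbol{d}}(\tau_{\bf d}(\mathbf{f}))$ has full rank $r={n\delta\choose n-1} \iff$ some $r\times r$ minor of it is nonzero $\iff \tau_{\bf d}(\mathbf{f})\notin V(I_r(\mathfrak{N}_{t_0,\boldsymbol{d}}))$, i.e., $\tau_{\bf d}(\mathbf{f})\in U_{t_0}$. Care is needed because the ``natural'' generating set is not minimal and may have repetitions, but this only affects rows, not the rank, so the equivalence survives; one should double-check that specializing the parameters commutes with forming the content matrix, which it does since the entries are linear in the $y$'s.

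For part (b), that $U_{t_0}$ is nonempty, it suffices by part (a) to exhibit a single choice of forms $f_1,\dots,f_m$ of the prescribed degrees for which $\langle I_m(\Theta(\mathbf{f})),\mathbf{f}\rangle$ has height $n$. The natural candidate is a ``monomial-type'' choice, e.g. when $m=n$ take $f_i=x_i^{d_i}$, for which the Jacobian determinant is $d_1\cdots d_n\, x_1^{d_1-1}\cdots x_n^{d_n-1}$ (nonzero in the stated characteristic), and then $\langle I_n(\Theta), \mathbf{f}\rangle \supset \langle x_1^{d_1-1}\cdots x_n^{d_n-1}, x_1^{d_1},\dots,x_n^{d_n}\rangle$, whose radical is $\fm$ — so the height is $n$. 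When $m<n$ one can take $f_i = x_i^{d_i}$ for $i\le m$ and check directly that the $m\times m$ minors of the Jacobian together with the $f_i$ generate an $\fm$-primary ideal (the minor using columns $1,\dots,m$ is a monomial supported on $x_1,\dots,x_m$, and combined with $x_i^{d_i}$ for $i \le m$ and minors involving other columns one recovers powers of all variables; alternatively, invoke Proposition~\ref{general_is_regular_sequence} and a standard transversality argument). When $m>n$ there are no minors and one needs the $f_i$ alone to be $\fm$-primary, which again follows by taking $n$ of them to be suitable powers of distinct variables. In each regime the verification is a short explicit computation, and the point is only that the locus is nonempty, so exhibiting one witness suffices.

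The main obstacle I anticipate is not conceptual but bookkeeping: making the reduction in part (a) airtight requires being careful that the three numerical invariants line up — that $t_0$ genuinely exceeds the top degree in which a height-$n$ ideal of the given generator-degrees can fail to be everything, and that $r = {n\delta\choose n-1}$ is indeed $\dim_\K R_{t_0}$ so that ``full rank'' means ``surjective onto $R_{t_0}$''. One subtle point: if the specialized ideal has height $< n$, one must argue it misses $R_{t_0}$, which follows because a non-$\fm$-primary homogeneous ideal has a minimal prime $\neq\fm$ and hence nonzero elements in every sufficiently high degree of the quotient — but ``sufficiently high'' needs to be checked to include $t_0$, using that the generators have degree $\leq\delta$ and invoking a Castelnuovo–Mumford-type or elementary Hilbert-function bound. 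Once this degree comparison is nailed down, both directions of (a) are immediate, and (b) is just the witness computation above.
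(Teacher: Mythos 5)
Your part (a) is essentially the paper's argument and is fine: height $n$ forces a regular sequence of $n$ forms of degree $\delta$ inside the ideal, whose socle degree $n(\delta-1)$ makes the degree-$t_0$ piece all of $R_{t_0}$, and conversely $[\langle I_m(\Theta(\mathbf{f})),\mathbf{f}\rangle]_{t_0}=R_{t_0}$ gives $\fm^{t_0}\subseteq \langle I_m(\Theta(\mathbf{f})),\mathbf{f}\rangle$, hence height $n$; the Castelnuovo--Mumford-type worry you raise is unnecessary, since a minimal prime $P\neq\fm$ would give $[\langle I_m(\Theta(\mathbf{f})),\mathbf{f}\rangle]_{t_0}\subseteq [P]_{t_0}\subsetneq R_{t_0}$ in every degree.

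The gap is in part (b), in the range $m<n$. Your proposed witness $f_i=x_i^{d_i}$ ($1\leq i\leq m$) fails there: the Jacobian matrix has only the columns $1,\ldots,m$ nonzero, so \emph{every} $m$-minor involving a column $j>m$ vanishes and $I_m(\Theta(\mathbf{f}))$ is the principal ideal $\langle x_1^{d_1-1}\cdots x_m^{d_m-1}\rangle$; thus $\langle I_m(\Theta(\mathbf{f})),\mathbf{f}\rangle$ is contained in $\langle x_1,\ldots,x_m\rangle$ and has height $m<n$, contrary to your parenthetical claim that the other minors ``recover powers of all variables.'' The suggested fallbacks do not repair this: Proposition~\ref{general_is_regular_sequence} says nothing about the Jacobian minors, and appealing to ``general forms intersect transversally'' is dangerously close to circular, since the statement that general $\mathbf{f}$ makes $\langle I_m(\Theta(\mathbf{f})),\mathbf{f}\rangle$ of height $n$ is exactly what Theorem~\ref{basic_open_set} together with Proposition~\ref{codim_jac} is meant to establish (and the transversality theorem you would cite is a characteristic-zero statement for $m\leq n-2$, while the theorem here allows all $m$ and only mild characteristic hypotheses). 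The paper produces the witness by a genuinely different device: induction on $n$, starting from $n=2$ (where $x_1^2+x_2^2$, resp.\ a regular sequence of two forms, works) and deforming a witness $\{f_1,\ldots,f_m\}\subset\K[x_1,\ldots,x_{n-1}]$ to $\{f_1,\ldots,f_{m-1},f_m+x_n^{d_m}\}$; the block shape of the new Jacobian then shows every minimal prime of the new ideal contains either $x_n$ together with the old height-$(n-1)$ ideal, or $I_{m-1}(\Theta(\mathbf{f}\setminus f_m))$ together with the forms, forcing height $n$. Some such construction (or an honest Bertini/smooth-complete-intersection witness with its hypotheses checked) is needed; as written, your (b) does not go through for $1\leq m<n$.
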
 
\begin{proof} (a) If the ideal
	If $\langle I_m(\Theta(\mathbf{f})), 
	\mathbf{f}\rangle$ has height $n$, it contains a regular sequence $g_1,\ldots,g_n$ of forms with $\deg g_i=\delta$ for every $1\leq i\leq m.$ Since the socle degree of  $R/\langle g_1,\ldots,g_n\rangle$ is $n(\delta-1)$ it follows that 
	$$R_{t_0}=[\langle g_1,\ldots,g_n\rangle]_{t_0}\subset [\langle I_m(\Theta(\mathbf{f})), 
	\mathbf{f}\rangle]_{t_0}\subset R_{t_0},$$
	hence, $[\langle I_m(\Theta(\mathbf{f})), 
	\mathbf{f}\rangle]_{t_0}= R_{t_0}.$
	From this follows that the matrix obtained by evaluating the parameter entries of  $\mathfrak{N}_{ t_0,\boldsymbol{d}}$ at the corresponding coefficients of the set $\{f_1,\ldots, f_m\}$ has rank $r.$ Thus,  $\tau_{\bf d}(f_1,\ldots, f_m)\in U_{t_0}.$
	
	Conversely, if $\tau_{\bf d}(f_1,\ldots, f_m)\in U_{t_0}$ then the the matrix obtained by evaluating the parameter entries of  $\mathfrak{N}_{ t_0,\boldsymbol{d}}$ at $\tau_{\bf d}(f_1,\ldots, f_m)$  has rank $r.$ Hence, the inclusion $[\langle I_m(\Theta(\mathbf{f})), 
	\mathbf{f}\rangle]_{t_0}\subset  R_{t_0}$ is an equality, thus implying that the ideal $\langle I_m(\Theta({\bf f})), f_1,\ldots,f_m\rangle$ has a maximal regular sequence in degree $t_0$.
	
	(b) In other words, by (a), it suffices to show the existence of a set $\mathbf{f}$ of $m$ forms in $R$ such that the ideal $\langle I_m(\Theta(\mathbf{f}), \mathbf{f}\rangle$ has height $n$.
	
	We induct on $n.$ 
	 If $n=2$ then for $m=1$ we can take $f_1=x_1^2+x_2^2$, and for $m\geq 2$  we can, by Proposition \ref{general_is_regular_sequence}, take $\{f_1,\ldots,f_m\}$ such that $\{f_1,f_2\}$ is a regular sequence. 
	 
	Thus, assume that $n>2.$ By the inductive step, there is a set of forms $\mathbf{f}=\{f_1,\ldots,f_m\}\subset \K[x_1,\ldots,x_{n-1}] $ of degrees $2\leq d_1\leq \cdots \leq d_m$ such that 
	$$\Ht  \langle I_m(\Theta(\mathbf{f})), 
\mathbf{f}\rangle=\Ht \langle I_{m-1}(\Theta({\mathbf{f}}\setminus{f_m})), f_1,\ldots,f_{m-1}\rangle=n-1 $$
	Now, consider the deformed set of forms $\widetilde{\mathbf{f}}=\{f_1,\ldots,f_{m-1},f_m+x_n^{d_m}\}$ of $\K[x_1,\ldots,x_n].$ 
	We claim that $\Ht \langle I_m(\Theta(\widetilde{\bf f}), \widetilde{\bf f} \rangle=n$.
	
	Indeed, the Jacobian matrix $\Theta(\widetilde{\mathbf{f}})$ has the shape:
	$$%\Theta(\tilde{{\bf f}})=
	\left[\begin{array}{ccc}
		\Theta({\mathbf{f}}\setminus{f_m})&\boldsymbol0\\
		\Theta(f_m)&d_mx^{d_m-1}	
	\end{array}\right].$$
	Clearly, then
	\begin{equation}\label{estimativa}
		\langle I_m(\Theta({\bf f})),x^{d_m-1}I_{m-1}(\Theta({\bf f}\setminus{f_m})),f_1,\ldots,f_m+x_n^{d_m}\rangle \subset \langle I_m(\Theta(\widetilde{\bf f}), \widetilde{\bf f} \rangle. 
	\end{equation}
	Now, let $P\subset R$ be a prime ideal of $R$ containing  $\langle I_m(\Theta(\widetilde{\bf f}), \widetilde{\bf f} \rangle$.  In particular,  by \eqref{estimativa}, it contains  $x^{d_m-1}I_{m-1}(\Theta({\bf f}\setminus{f_m})).$ 
	 If $P$ contains $x_n$ then, again by \eqref{estimativa}, it contains the ideal  $\langle I_m(\Theta({\bf f})),f_1,\ldots,f_m, x_n\rangle\subset P$ of height $n=n-1+1$ by the inductive assumption and since $x_n$ is a regular element over $R/\langle I_m(\Theta({\bf f})),f_1,\ldots,f_m\rangle$.

	On the other hand, if $I_{m-1}(\Theta({\bf f}\setminus{f_m}))\subset P$ then, always by \eqref{estimativa}, $$\langle I_{m-1}(\Theta({\bf f}\setminus{f_m})),f_1,\ldots,f_{m-1},f_m+x_n^{d_m}\rangle\subset P,$$
	 and again, by the inductive hypothesis, one has $\Ht P=n.$
	\end{proof}

\begin{Proposition}\label{codim_jac}
	The  set ${\bf f}=\{f_1,\ldots,f_m\}\subset R$ of general forms  satifies the equality $\Ht \langle I_{m}(\Theta({\bf f})),{\bf f}\rangle =n$ {\rm (}in particular, the variety $V(\bf f)\subset \pp_{\K}^{n-1}$ is a reduced and irreducible smooth variety provided $m\leq n-2${\rm )}.
\end{Proposition}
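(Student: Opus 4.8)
The plan is to read off the equality $\Ht\langle I_m(\Theta(\mathbf{f})),\mathbf{f}\rangle=n$ directly from Theorem~\ref{basic_open_set}. Part~(b) of that theorem provides a \emph{nonempty} Zariski open subset $U_{t_0}\subset\Proj_{\K,\mathbf{d}}$; since the parameter space $\Proj_{\K,\mathbf{d}}$ is a product of projective spaces, hence irreducible, $U_{t_0}$ is dense and its complement $V(I_r(\mathfrak{N}_{t_0,\boldsymbol{d}}))$ is a proper closed subset defined by multi-homogeneous forms. By the definition of a \emph{general} set of forms recalled in the Preliminaries, the parameter point $\tau_{\mathbf{d}}(f_1,\dots,f_m)$ of a general $\mathbf{f}$ therefore lies in $U_{t_0}$, and part~(a) of Theorem~\ref{basic_open_set} yields $\Ht\langle I_m(\Theta(\mathbf{f})),\mathbf{f}\rangle=n$ at once. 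This settles the main assertion.

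For the parenthetical claim I would proceed as follows. Assume $m\le n-2$; in particular $m\le n$, so Proposition~\ref{general_is_regular_sequence} guarantees that a general $\mathbf{f}$ is a regular sequence. Being a regular sequence and satisfying $\Ht\langle I_m(\Theta(\mathbf{f})),\mathbf{f}\rangle=n$ are each dense open conditions on the irreducible space $\Proj_{\K,\mathbf{d}}$, hence a general $\mathbf{f}$ enjoys both simultaneously. Then $X:=V(\mathbf{f})\subset\pp_{\K}^{n-1}$ is a complete intersection of pure codimension $m$, hence of dimension $n-1-m\ge 1$. The height condition forces $\sqrt{\langle I_m(\Theta(\mathbf{f})),\mathbf{f}\rangle}=\fm$ (the unique homogeneous prime of height $n$), so $V(\langle I_m(\Theta(\mathbf{f})),\mathbf{f}\rangle)=\emptyset$ in $\pp_{\K}^{n-1}$; applying the Jacobian criterion to the complete intersection $X$, this says precisely that $X$ is smooth. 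Finally, a smooth complete intersection of positive dimension in projective space is connected, hence (being smooth, so locally a domain) irreducible, and smoothness gives reducedness; thus $X$ is a reduced, irreducible, smooth variety, in agreement with \cite[Theorem~17.18]{HarrisBook}.

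Most of the substance has already been packed into Theorem~\ref{basic_open_set}, so this argument is largely bookkeeping, and I do not expect a serious obstacle. The two points worth stating explicitly rather than leaving implicit are: (i) that two genericity requirements (regular sequence; Jacobian ideal of height $n$) can be imposed together, which works only because the parameter space is irreducible; and (ii) the use of connectedness of smooth complete intersections of positive dimension in $\pp^{n-1}$ --- valid in every characteristic --- to pass from ``smooth'' to ``reduced and irreducible''.
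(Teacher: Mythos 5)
Your argument is correct and is essentially the paper's own proof, which simply declares the proposition an immediate consequence of Theorem~\ref{basic_open_set}: nonemptiness plus irreducibility of the product parameter space makes $U_{t_0}$ dense, so a general $\mathbf{f}$ lands in it and part~(a) gives the height. Your elaboration of the parenthetical claim (regular sequence via Proposition~\ref{general_is_regular_sequence}, Jacobian criterion, connectedness of positive-dimensional smooth complete intersections) just fills in details the paper delegates to \cite[Theorem~17.18]{HarrisBook}, and it is sound.
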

\begin{proof}
	This is an immediate consequence of Theorem~\ref{basic_open_set}. 
\end{proof}

\subsection{Main result}

We next move on to the RTY-property, which is our main goal.

As above, we will make constant use of the Jacobian matrix $\Theta(\bf f)$ of a finite set $\bf f$ of forms of $R$.

Let there be given a set  ${\bf f}=\{f_1,\ldots,f_m\}\subset R$  of forms, with respective degrees $d_1,\ldots,d_m$ ($d_i\geq 2$ for every $i$).
We observe that in the sequel we do not assume $m\leq n$.

Given a permutation $\sigma$ of $m$ indices, introduce the following additional notation
\begin{enumerate}
	\item[$\bullet$] ${\bf f}_{\sigma} : = \{f_{\sigma(1)},\ldots, f_{\sigma(m)}\}.$
	\item[$\bullet$] $\Pi({\bf f}_{\sigma}):=\{ f_{\sigma(2)}\cdots f_{\sigma(m)},\ldots, f_{\sigma(1)}\cdots f_{\sigma(m-1)}\},$ that is,  the orderly set of $(m-1)$-fold products of ${\bf f}_{\sigma}.$ 
	\item[$\bullet$] $\mathfrak{D}({\bf f}_{\sigma}):$  the $m\times(m-1)$ matrix 
	$$
	\left[\begin{array}{cccccc}
		-f_{\sigma(1)}&0&\cdots&0\\
		0&-f_{\sigma(2)}&\cdots&0\\
		\vdots&\vdots&\ddots&\vdots\\
		0&0&\cdots&-f_{\sigma(m-1)}&\\
		f_{\sigma(m)}&f_{\sigma(m)}&\cdots&f_{\sigma(m)}
	\end{array}\right],$$
	if $m\geq 2$, and $\mathfrak{D}({\bf f}_{\sigma})=\emptyset$ if $m=1.$
	\item[$\bullet$] $\widetilde{\mathfrak{D}({\bf f}_{\sigma})}:$   the $(m-2)\times (m-2)$ upper left corner  (diagonal) submatrix of $\mathfrak{D}({\bf f}_{\sigma})$
	\item[$\bullet$] ${\rm M}({\bf f}_{\sigma}):$  the $m\times (n+m-1)$ block matrix $\left[\begin{array}{c|c}\Theta({\bf f}_{\sigma})&\mathfrak{D}({\bf f}_{\sigma})\end{array}\right].$
\end{enumerate}	
If $\sigma$ is the identity we drop the subscript throughout the abobe notation. 

	In addition, set 
$\mathbb{I}_{r}$ for the $r\times r$ identity matrix.

Note that the elements of $\Pi({\bf f}_{\sigma})$ are the signed ordered $(m-1)$-minors of the $m\times(m-1)$ matrix $\mathfrak{D}({\bf f}_{\sigma}).$ Thus, by the Hilbert-Burch theorem, if $\Ht \langle \Pi({\bf f}_{\sigma}) \rangle\geq 2$ then $\mathfrak{D}({\bf f}_{\sigma})$ is the syzygy matrix of the ideal $\langle \Pi({\bf f}_{\sigma}) \rangle.$ On the other hand, since the set $\Pi({\bf f}_{\sigma})$ is a permutation of the set $\Pi({\bf f}),$ we have $\langle\Pi({\bf f}_{\sigma})\rangle=\langle\Pi({\bf f})\rangle$  and
\begin{equation}\label{conjugatio1}
	\mathfrak{D}({\bf f}_{\sigma})=A\, \mathfrak{D}({\bf f}) B
\end{equation}
where $A\in{\rm GL}_{m}(R)$ and $B\in {\rm GL}_{m-1}(R)$ correspond to suitable elementary row and  column  operations, respectively.  In particular,

\begin{equation}\label{conjugation2}
	M({\bf f}_{\sigma})=A\, M({\bf f}) \left[\begin{array}{cc} \mathbb{I}_{n}&\\&B\end{array}\right].
\end{equation}

\begin{Lemma}\label{conjugation} Suppose that any two among the forms $f_1,\ldots,f_m$ are relatively prime. Then, for every $1\leq i\leq m,$ the matrix $M({\bf f})$ is conjugate to the block matrix
	%there are $ A_i\in {\rm GL}_{n+m-1}(R)$ and $B_i\in {\rm GL}_{m}(R)$ such that
	$$
	%A_iM({\bf f})B_i=
	\left[\begin{array}{ccc}\Theta({\bf f}\setminus \{f_i\})&\mathfrak{D}({\bf f}\setminus\{f_i\})& \boldsymbol0\\
		\Theta(f_i)	&\boldsymbol0&f_{i}\\
	\end{array}\right],$$
	where $\boldsymbol0$ denotes a null matrix of the suitable dimension. 
\end{Lemma}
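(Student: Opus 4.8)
The plan is to reduce to the case $i=m$ by a permutation of the forms, and then to reach the asserted block shape by a single explicit invertible column transformation of $M(\mathbf{f})$; the only step with any content is the manufacture of the column $(0,\dots,0,f_{m})^{t}$ out of the gradient columns, via Euler's identity.

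\emph{Reduction to $i=m$.} Applying \eqref{conjugation2} to the transposition $\sigma=(i\,m)$ shows $M(\mathbf{f})$ is conjugate to $M(\mathbf{f}_{\sigma})$, whose last form is $f_{i}$ and whose remaining forms are, as a set, $\mathbf{f}\setminus\{f_{i}\}$. Applying \eqref{conjugation2} again, now to the $(m-1)$-element set $\mathbf{f}\setminus\{f_{i}\}$ (and padding the conjugating matrices by a $1\times 1$ identity block to absorb the extra row and column), one sees that the block matrix in the statement is, up to conjugation, independent of the order in which the forms of $\mathbf{f}\setminus\{f_{i}\}$ are listed. Hence it suffices to treat $i=m$.

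\emph{The case $i=m$.} Write $g_{1},\dots,g_{n}\in R^{m}$ for the columns of $\Theta(\mathbf{f})$ (the first $n$ columns of $M(\mathbf{f})$) and $C_{1},\dots,C_{m-1}$ for the columns of $\mathfrak{D}(\mathbf{f})$, with $C_{k}$ occupying column $n+k$ of $M(\mathbf{f})$, so that $C_{k}=-f_{k}e_{k}+f_{m}e_{m}$ in the standard basis $e_{1},\dots,e_{m}$ of $R^{m}$; Euler's identity is $\sum_{j=1}^{n}x_{j}g_{j}=\sum_{i=1}^{m}d_{i}f_{i}e_{i}$. First carry out the elementary column operations replacing $C_{k}$ by $C_{k}':=C_{k}-C_{m-1}=-f_{k}e_{k}+f_{m-1}e_{m-1}$ for $1\le k\le m-2$; the columns $C_{1}',\dots,C_{m-2}'$ are precisely those of $\mathfrak{D}(\mathbf{f}\setminus\{f_{m}\})$ (read inside $R^{m-1}\subset R^{m}$, last coordinate $0$), while column $n+m-1$ is still $C_{m-1}$. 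Substituting $f_{i}e_{i}=f_{m}e_{m}-C_{i}$ and, for $i\le m-2$, $C_{i}=C_{i}'+C_{m-1}$ into Euler's identity and collecting terms gives
\[
\Bigl(\sum_{i=1}^{m}d_{i}\Bigr)f_{m}e_{m}=\sum_{j=1}^{n}x_{j}g_{j}+\sum_{i=1}^{m-2}d_{i}\,C_{i}'+\Bigl(\sum_{i=1}^{m-1}d_{i}\Bigr)C_{m-1}.
\]
Under the standing hypothesis that $\operatorname{char}\K$ divides neither $\sum_{i=1}^{m}d_{i}$ nor $\sum_{i=1}^{m-1}d_{i}$, replace column $n+m-1$ by $\bigl(\sum_{i=1}^{m}d_{i}\bigr)^{-1}$ times the right-hand side above: the coefficient there of the column being replaced is $\bigl(\sum_{i=1}^{m-1}d_{i}\bigr)\big/\bigl(\sum_{i=1}^{m}d_{i}\bigr)\in\K^{\times}$, so this too is an invertible column operation, after which column $n+m-1$ equals $(0,\dots,0,f_{m})^{t}$ and no other column has changed. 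The matrix so obtained is exactly the block matrix of the statement for $i=m$; hence $M(\mathbf{f})$ is right-equivalent to it, and, together with the reduction step, conjugate to the asserted matrix for every $i$.

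The single genuine obstacle is the creation of the column $(0,\dots,0,f_{m})^{t}$: no amount of elementary operations \emph{within} the $\mathfrak{D}(\mathbf{f})$-block can produce it, since that vector does not belong to the column span of $\mathfrak{D}(\mathbf{f})$, so the gradient columns must be fed in through Euler's identity --- which is where the mild characteristic restriction is used. (Pairwise coprimality of the $f_{i}$ is not needed for this argument; in the intended applications it guarantees that $\langle\Pi(\mathbf{f})\rangle$ and its analogues have height $\ge 2$, so that the matrices $\mathfrak{D}(\cdot)$ are genuine Hilbert--Burch matrices.) What remains is bookkeeping: checking that each column move is a bona fide element of $\mathrm{GL}_{n+m-1}(R)$, and that the permutation used in the reduction to $i=m$ causes no harm.
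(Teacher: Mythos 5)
Your proof is correct and is essentially the paper's own argument: reduce to a convenient ordering of the forms via \eqref{conjugation2}, subtract the $(m-1)$-st column of $\mathfrak{D}(\mathbf{f})$ from the preceding ones to produce $\mathfrak{D}(\mathbf{f}\setminus\{f_m\})$, and feed Euler's identity into an invertible column replacement turning the last column into $(0,\dots,0,f_m)^t$ --- precisely what the paper's matrices $U$ and $V$, followed by the scaling by $1/\sum_j d_j$, accomplish. The only differences are bookkeeping: the paper runs the computation for an arbitrary permutation and then specializes to $\sigma_i$ (so no second reordering of $\mathbf{f}\setminus\{f_i\}$ is needed), and it leaves implicit the requirements $\mathrm{char}\,\K\nmid\sum_{j}d_j$ and $\mathrm{char}\,\K\nmid\sum_{j\neq i}d_j$ (invertibility of $V$ and the final scaling), which you state explicitly.
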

\begin{proof} For any permutation $\sigma$ we have a block decomposition
	
	$$M({\bf f}_{\sigma})=\left[\begin{array}{ccccc}
		\Theta({\bf f}_{\sigma}\setminus \{f_{\sigma(m-1)},f_{\sigma(m)}\})&\widetilde{\mathfrak{D}({\bf f}_{\sigma})}&\boldsymbol0\\
		\Theta(f_{\sigma(m-1)})&\boldsymbol0&-f_{\sigma(m-1)}\\
		\Theta(f_{\sigma(m)})&f_{\sigma(m)}\boldsymbol{1}_{m-2}&f_{\sigma(m)}
	\end{array}\right].$$
	Selecting the matrix
	
	$$U=\left[\begin{array}{cccccc}
		\mathbb{I}_{n}&\boldsymbol0&\boldsymbol0\\
		\boldsymbol0&\mathbb{I}_{m-2}&\boldsymbol0\\
		\boldsymbol0&\boldsymbol{1}_{m-2}&1
	\end{array}\right]\in {\rm GL}_{n+m-1}(R),$$
clearly,
	
	$$M({\bf f}_{\sigma})U=
	\left[\begin{array}{ccccccccccc}
		\Theta({\bf f}_{\sigma}\setminus \{f_{\sigma(m-1)},f_{\sigma(m)}\})&\widetilde{\mathfrak{D}({\bf f}_{\sigma})}&\boldsymbol0\\
		\Theta(f_{\sigma(m-1)})&f_{\sigma(m-1)}\boldsymbol{1}_{m-2}&-f_{\sigma(m-1)}\\
		\Theta(f_{\sigma(m)})&\boldsymbol0&f_{\sigma(m)}
	\end{array}\right].$$
Next, letting
$${\bf D}^t=\left[\begin{array}{cccccccccc}d_{\sigma(1)}&\cdots&d_{\sigma(m-2)}\end{array}\right]\quad\mbox{and}\quad \delta=\sum_{j=1}^{m-1}d_{\sigma(j)},$$
consider the matrix	
	$$V=\left[\begin{array}{ccccc}
		\mathbb{I}_{n}&\boldsymbol0&{\bf x}^t\\
		\boldsymbol0&\mathbb{I}_{m-2}&{\bf D}\\
		\boldsymbol0&\boldsymbol0&\delta
	\end{array}\right]\in {\rm GL}_{n+m-1}(R),$$
where ${\bf x}= \left[x_1 \; \cdots \; x_n\right]$.
	
Euler's formula implies the following relations: 
	\begin{equation*}
		\Theta({\bf f}_{\sigma}\setminus \{f_{\sigma(m-1)},f_{\sigma(m)}\})\cdot{\bf x}^t+\widetilde{\mathfrak{D}({\bf f}_{\sigma})}{\bf D}=\boldsymbol0,
	\end{equation*}
	\begin{equation*}
		\Theta(f_{\sigma(m-1)})\cdot {\bf x}^t+ f_{\sigma(m-1)}\boldsymbol1_{m-2}\cdot {\bf D}-\delta f_{\sigma(m-1)}=0
	\end{equation*}
	and
	\begin{equation*}
		\Theta(f_{\sigma(m)})\cdot{\bf x}^t+\delta f_{\sigma(m)}=(\sum_{j=1}^m d_{\sigma(j)})f_{\sigma(m)}.
	\end{equation*}
	From these,
	\begin{equation*}\label{conjugation3}
		{\rm M}({\bf f}_{\sigma})UV=\left[\begin{array}{ccccccccccc}
			\Theta({\bf f}\setminus \{f_{\sigma(m-1)},f_{\sigma(m)}\})&\widetilde{\mathfrak{D}({\bf f}_{\sigma})}&\boldsymbol0\\
			\Theta(f_{\sigma(m-1)})&f_{\sigma(m-1)}\boldsymbol{1}_{m-2}&\boldsymbol0\\
			\Theta(f_{\sigma(m)})&\boldsymbol0&(\sum_{j=1}^m d_{\sigma(j)})f_{\sigma(m)}
		\end{array}\right]
	\end{equation*}
	Applying with $\sigma:=\sigma_i$ such that  
	${\bf f}_{\sigma_i}= \{f_1,\ldots,f_{i-1},f_{i+1},\ldots,f_{m},f_i\},$ gives:

%	By \eqref{conjugation2}, $M({\bf f})$ is conjugate to
	
	\begin{eqnarray}\label{almost_final}\nonumber
		{\rm M}({\bf f}_{\sigma})UV&=&
		\left[\begin{array}{ccccccccccc}
		\Theta({\bf f}\setminus \{f_{m},f_{i}\})&\widetilde{\mathfrak{D}({\bf f}_{\sigma_i})}&\boldsymbol0\\
		\Theta(f_{m})&[f_m\cdots f_m]&\boldsymbol0\\
		\Theta(f_{i})&\boldsymbol0&(\sum_{j=1}^m d_{\sigma(j)})f_{i}
	\end{array}\right]\\ [5pt]
	&=& \left[\begin{array}{ccccccccccc}
		\Theta({\bf f}\setminus \{f_{m},f_{i}\})&\mathfrak{D}({\bf f}\setminus\{f_i\})&\boldsymbol0\\
		\Theta(f_{i})&\boldsymbol0&(\sum_{j=1}^m d_{\sigma(j)})f_{i}
	\end{array}\right]
	\end{eqnarray}
By \eqref{conjugation2}, $M({\bf f})$ is then conjugate to (\ref{almost_final}).
Since $d:=\sum_{j=1}^m d_{\sigma(j)}\neq 0$, multiplying on the right by the matrix
$$\left[\begin{array}{cc} \mathbb{I}_{n+m-2}&\\&1/d\end{array}\right]$$
yields the required matrix in the statement. 
\end{proof}

\begin{Proposition}\label{heights_main}
Let ${\bf f}=\{f_1,\ldots,f_m\}\subset R:=\mathbb K[x_1,\ldots,x_n]$ be a set  of $m\geq 1$  forms of degrees $2\leq  \deg f_1=d_1\leq \cdots\leq \deg f_m=d_m$.
Suppose that  any of its subsets with at most $n$ elements is a regular sequence satisfying the equality of {\rm Propositon~\ref{codim_jac}}. Then, $\Ht I_{m-1}(\mathfrak{D}({\bf f}))=2$ and $\Ht I_{m}(M({\bf f}))=n.$
\end{Proposition}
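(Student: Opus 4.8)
The plan is to prove the two height statements in sequence, the first being essentially bookkeeping and the second being the heart of the matter.

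For $\Ht I_{m-1}(\mathfrak{D}({\bf f}))=2$, recall that the $(m-1)$-minors of $\mathfrak{D}({\bf f})$ are, up to sign, precisely the $(m-1)$-fold products $\Pi({\bf f})=\{f_2\cdots f_m,\ldots,f_1\cdots f_{m-1}\}$, so $I_{m-1}(\mathfrak{D}({\bf f}))=\langle\Pi({\bf f})\rangle$. Since any subset of $\le n$ of the $f_i$ is a regular sequence, in particular $\gcd(f_i,f_j)=1$ for $i\ne j$ (for $m\ge 2$; the case $m=1$ being vacuous as $\mathfrak{D}=\emptyset$), and the height of an ideal generated by the $(m-1)$-fold products of pairwise coprime forms is exactly $2$ — the containment $\ge 2$ is standard (e.g.\ two generators $f_2\cdots f_m$ and $f_1\cdots f_{m-1}$ have no common factor, so their ideal has height $\ge 2$), and $\le 2$ holds because, say, the prime $\langle f_1,f_2\rangle$ (of height $2$ since $f_1,f_2$ is a regular sequence) contains every element of $\Pi({\bf f})$. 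This also shows, via the Hilbert--Burch theorem, that $\mathfrak{D}({\bf f})$ is the syzygy matrix of $\langle\Pi({\bf f})\rangle$, which we will not strictly need.

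For $\Ht I_m(M({\bf f}))=n$, the first observation is that $n$ is the maximum possible, since $M({\bf f})$ has $m$ rows and the ambient ring has dimension $n$, so it suffices to show $\Ht I_m(M({\bf f}))\ge n$. I would argue by induction on $m$, using Lemma~\ref{conjugation}: since conjugation preserves the ideal of $m$-minors, $I_m(M({\bf f}))=I_m(N)$ where
$$
N=\left[\begin{array}{ccc}\Theta({\bf f}\setminus\{f_m\})&\mathfrak{D}({\bf f}\setminus\{f_m\})&\boldsymbol0\\ \Theta(f_m)&\boldsymbol0&f_m\end{array}\right].
$$
Expanding an $m$-minor of $N$ along the last column shows
$$
I_m(N)\supseteq f_m\cdot I_{m-1}\!\left(\left[\begin{array}{cc}\Theta({\bf f}\setminus\{f_m\})&\mathfrak{D}({\bf f}\setminus\{f_m\})\end{array}\right]\right)+I_m\!\left(\left[\begin{array}{cc}\Theta({\bf f})&\ast\end{array}\right]\right),
$$
so, writing $M'=M({\bf f}\setminus\{f_m\})$ for the analogous $(m-1)\times(n+m-2)$ block matrix attached to the first $m-1$ forms, one gets the containment $f_m\cdot I_{m-1}(M')+\langle I_m(\Theta({\bf f})),f_1,\ldots,f_m\rangle\subseteq I_m(M({\bf f}))$ (the ``$+ f_1\cdots f_{m-1}$'' type terms coming from $\mathfrak{D}$ contribute the $f_i$'s as well). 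Now let $P$ be a prime containing $I_m(M({\bf f}))$. If $P$ does not contain $f_m$, then $P\supseteq I_{m-1}(M')$ and by the inductive hypothesis (the hypotheses of the Proposition pass to any subset) $\Ht P\ge n$. If $P$ contains $f_m$, then $P\supseteq\langle I_m(\Theta({\bf f})),f_1,\ldots,f_{m-1},f_m\rangle$, and since this ideal contains $\langle I_m(\Theta({\bf f})),{\bf f}\rangle$ which has height $n$ by Proposition~\ref{codim_jac} when $m\le n$ — while for $m>n$ one has $\boldsymbol{\mathfrak g}=\emptyset$ so $I_m(\Theta({\bf f}))=0$ and one must instead use that the $f_i$'s together with enough of the $\Theta(f_i)$-entries still cut out codimension $n$, which is where the regular-sequence hypothesis on all $\le n$-element subsets combined with Proposition~\ref{codim_jac} applied to those subsets re-enters — one concludes $\Ht P\ge n$ again. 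The base case $m=1$ reads $\Ht I_1(M(\{f_1\}))=\Ht I_1([\Theta(f_1)])=\Ht J_{f_1}=n$, which holds because a general (hence smooth) form $f_1$ of degree $\ge 2$ has $V(f_1)$ smooth, forcing $J_{f_1}$ to be $\fm$-primary.

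The main obstacle I anticipate is the bookkeeping at the inductive step when $m>n$: here $I_m(\Theta({\bf f}))=0$, so the ``$P$ contains $f_m$'' branch cannot simply invoke Proposition~\ref{codim_jac} for the full set ${\bf f}$, and one has to be careful to extract height $n$ from the ideal $\langle f_1,\ldots,f_m\rangle$ together with residual minor contributions — the cleanest route is probably to reduce to the case $m=n$ by discarding forms and noting that adding more forms can only enlarge the ideal $I_m(M({\bf f}))$ appropriately after the conjugation, then quote Proposition~\ref{codim_jac} and the regular-sequence hypothesis for an $n$-element subset. Verifying that the containments survive restriction to subsets, i.e.\ that the hypotheses are genuinely inherited, is the one place demanding care rather than routine manipulation.
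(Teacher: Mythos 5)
The main gap is in the branch $f_m\in P$ of your induction. The containment you invoke there, $\langle I_m(\Theta({\bf f})),f_1,\ldots,f_m\rangle\subseteq I_m(M({\bf f}))$, is false in general: since the block $\mathfrak{D}({\bf f})$ has only $m-1$ columns, every $m$-minor of $M({\bf f})$ uses at least one Jacobian column, and a minor using $k$ Jacobian columns is homogeneous of degree $d_1+\cdots+d_m-k\geq d_1+\cdots+d_m-\min(m,n)$, which exceeds $\deg f_1=d_1$ except in borderline cases (e.g.\ already for $m=2$, $d_2\geq 3$); so the $f_i$ themselves do not lie in $I_m(M({\bf f}))$, and the parenthetical ``the $\mathfrak{D}$-terms contribute the $f_i$'s as well'' does not hold. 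Consequently, when $f_m\in P$ you only know $f_m\in P$ and $I_m(\Theta({\bf f}))\subseteq P$, which is not enough to quote Proposition~\ref{codim_jac}. The repair --- and this is exactly what the paper does --- is to apply Lemma~\ref{conjugation} for \emph{every} index $i$, not only $i=m$: this yields $f_i\,I_{m-1}(M({\bf f}\setminus\{f_i\}))\subseteq I_m(M({\bf f}))$ for all $i$, so for a minimal prime $P$ of $I_m(M({\bf f}))$ either $I_{m-1}(M({\bf f}\setminus\{f_i\}))\subseteq P$ for some $i$ (and induction applies), or else $f_i\in P$ for every $i$; in the latter case, if $m\leq n$ the standing hypothesis gives $\Ht\langle I_m(\Theta({\bf f})),{\bf f}\rangle=n$ (note $I_m(\Theta({\bf f}))\subseteq I_m(M({\bf f}))\subseteq P$ automatically), while if $m>n$ one simply observes that $f_1,\ldots,f_n$ is a regular sequence contained in $P$, so $\Ht P\geq n$. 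This last remark also settles the $m>n$ worry in your closing paragraph; no reduction to $m=n$ or appeal to ``residual minor contributions'' is needed.

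Two smaller slips: in the first half, the pair $f_2\cdots f_m$ and $f_1\cdots f_{m-1}$ shares the common factor $f_2\cdots f_{m-1}$ once $m\geq 3$, so it does not witness $\Ht I_{m-1}(\mathfrak{D}({\bf f}))\geq 2$; argue instead that a height-one prime is principal and its generator would have to divide two distinct, coprime $f_i$'s, or, as the paper does, use that $F$ is reduced, so $\Ht J_F\geq 2$, together with $J_F\subseteq I_{m-1}(\mathfrak{D}({\bf f}))$. Also $\langle f_1,f_2\rangle$ need not be prime (a minimal prime of it suffices for the upper bound), and the base case $m=1$ should be justified by the hypothesis itself (the equality of Proposition~\ref{codim_jac} for the singleton $\{f_1\}$), not by genericity or smoothness, since the forms in this proposition are not assumed general.
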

\begin{proof} Since any  subset of ${\bf f}$ with at most $n$ elements is a regular sequence satisfying the equality of {\rm Propositon~\ref{codim_jac}} then $F=f_1\cdots f_m$ is a reduced homogeneous polynomial. Hence, $\Ht J_F\geq 2.$ In particular, since $J_F\subset I_{m-1}(\mathfrak{D}({\bf f}))$ we have   $\Ht I_{m-1}(\mathfrak{D}({\bf f}))\geq 2$ as well.

Let us argue that $I_{m}(M({\bf f}))$ has height $n.$ 
	
	We induct on $m$. For $m=1$, one has $ I_1(M({\bf f}))=\langle f_1,I_1(\Theta(f_1))$, which, by hypothesis,  is an ideal of height $n.$

Now, suppose $m>1.$ By  Lemma~\ref{conjugation}, one has
\begin{equation}\label{inclusion}
	f_iI_{m-1}(M({\bf f}\setminus\{f_i\}))\subset I_{m}(M({\bf f})).
\end{equation}
for every $1\leq i\leq m.$ Thus, if  $P\subset R$ is a minimal prime of $I_{m}(M({\bf f}))$ then:
\begin{enumerate}
	\item[{\rm (i)}] Either $I_{m-1}(M({\bf f}\setminus\{f_i\}))\subset P$ for some $1\leq i\leq m,$
	\item[{\rm (ii)}] else, $f_{i}\in P$ for every $1\leq i\leq m.$
\end{enumerate}

If (i) is the case, then:
$$	\Ht P\geq\Ht I_{m-1}(M({\bf f}\setminus\{f_i\}))
=n,$$
where the last equality is by induction on $m$.  Hence, $\Ht P=n.$

Now, suppose (ii) is satisfied. If $m\leq n$, then $\langle I_{m}(\Theta({\bf f})),{\bf f}\rangle\subset P$; hence, by the standing assumption, $\Ht P=n$. On the other hand, if $m>n$ then $f_1,\ldots,f_n$ is a regular sequence again by the main assumption. Hence, $\Ht P=n$ too.

Therefore, $\Ht I_{m}(M({\bf f}))=n$
\end{proof}

We now derive the main theorem of the section.

\begin{Theorem}\label{main_general_forms}
Let $\{f_1,\ldots,f_m\}\subset R:=\mathbb K[x_1,\ldots,x_n]$ be a set of forms of degrees $\geq 2$, such that any subset with at most $n$ elements is  general, and let   $F:=f_1\cdots f_m$. Then, a minimal  free resolution of $R/J_F$ has the form 
$$0\to R^{b_n}\stackrel{\partial_n}{\to} \cdots\to R^{b_2}\stackrel{\partial_2}{\to} R^{n}\stackrel{\partial_1}{\to} R\lar R/J_F\to 0$$
where $b_i={m+n-1\choose m+i-1}{m+i-3\choose m-1},\, 2\leq i\leq n.$ In particular, $\{f_1,\ldots,f_m\}$ satisfies the {\rm RTY}-property.
\end{Theorem}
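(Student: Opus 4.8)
The plan is to identify $J_F$ with an ideal of maximal minors so that the Buchsbaum--Rim complex applies, exactly paralleling the treatment of linear forms in Section~\ref{Section2}. The starting point is the Euler-type relation
$$\left[\begin{matrix}\tfrac{1}{d_1}f_{1}'&\cdots&\tfrac{1}{d_m}f_{m}'\end{matrix}\right]\,\Theta({\bf f})=\left[\begin{matrix}\tfrac{\partial F}{\partial x_1}&\cdots&\tfrac{\partial F}{\partial x_n}\end{matrix}\right],$$
where $f_i':=F/f_i$; combined with the obvious syzygies encoded by $\mathfrak{D}({\bf f})$, this shows that the row vector $\left[\begin{matrix}\tfrac{\partial F}{\partial x_1}&\cdots&\tfrac{\partial F}{\partial x_n}&0&\cdots&0\end{matrix}\right]$ annihilates a suitable completion of $M({\bf f})=\left[\begin{array}{c|c}\Theta({\bf f})&\mathfrak{D}({\bf f})\end{array}\right]$ on the right, so that (after the conjugations of~\eqref{conjugation2}) $J_F$ is recovered as the ideal of $m$-minors of $M({\bf f})$ that use all $n$ Jacobian columns, or equivalently the $(m-1)$-minors of $\mathfrak{D}$ read off against $\Theta$. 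First I would spell out this minor identification carefully, using the Hilbert--Burch-type uniqueness (\lemref{generator-minorsordered}) once $\Ht I_{m-1}(\mathfrak{D}({\bf f}))=2$ is known.

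The two height statements I need are supplied by \propref{heights_main}: the hypothesis that every subset of $\{f_1,\dots,f_m\}$ with at most $n$ elements is general feeds into \propref{general_is_regular_sequence} and \propref{codim_jac} to give both $\Ht I_{m-1}(\mathfrak{D}({\bf f}))=2$ and $\Ht I_m(M({\bf f}))=n$, which is the maximum possible for the $m\times(n+m-1)$ matrix $M({\bf f})$. Granted $\Ht I_m(M({\bf f}))=n$, the cokernel of $M({\bf f})$ is resolved by the Buchsbaum--Rim complex, whose terms are $\mathrm{Sym}_{i-1}$ of the free module of rank $n$ tensored with exterior powers of the free module of rank $n+m-1$; a rank count gives exactly $b_i=\binom{m+n-1}{m+i-1}\binom{m+i-3}{m-1}$ for $2\le i\le n$, and the complex has length $n$. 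The passage from the Buchsbaum--Rim resolution of $\coker M({\bf f})$ to a resolution of $R/J_F$ is then the same ``pruning'' maneuver used in the proof of \thmref{RTYoriginal}: the relation expressing the $\partial F/\partial x_j$ against the $\mathfrak{D}$-block lets one splice off the $\mathfrak{D}$-columns, and \cite[Theorem D]{AndSim1986} (as invoked in the paper's introduction) guarantees that what remains is a \emph{minimal} graded free resolution of $R/J_F$, of length $n$. Length $n$ is the maximum possible over the $n$-dimensional ring $R$, so $\depth R/J_F=0$ by Auslander--Buchsbaum, i.e.\ the RTY-property holds.

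I would organize the write-up as: (1) establish the minor identification of $J_F$; (2) invoke \propref{heights_main} for the two heights; (3) write down the Buchsbaum--Rim complex of $\coker M({\bf f})$ and compute its Betti numbers; (4) prune to get the complex resolving $R/J_F$ and cite \cite[Theorem D]{AndSim1986} for minimality; (5) conclude via Auslander--Buchsbaum. The main obstacle I anticipate is step~(4): checking that the pruned complex is genuinely acyclic and minimal rather than merely a complex — this is where the exactness criterion of Andrade--Simis does the real work, and where one must verify that the ``extra'' $\mathfrak{D}$-columns of $M({\bf f})$ contribute nothing to the homology, relying essentially on the fact that the $m$-minors using those columns lie inside $J_F$ together with the codimension-$2$ perfectness of $\langle\Pi({\bf f})\rangle$. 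A secondary nuisance is the bookkeeping of the degree shifts, which must be tracked through the conjugating matrices $U,V$ of \lemref{conjugation} to land on a genuinely \emph{graded} minimal resolution (the statement as given suppresses the shifts, writing only the ranks $b_i$, so this bookkeeping can be kept light).
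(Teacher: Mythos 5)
Your proposal follows essentially the same route as the paper: identify $J_F$ with the ideal of $m$-minors of $M({\bf f})$ fixing the $\mathfrak{D}({\bf f})$-block of columns, obtain the two height conditions from Proposition~\ref{heights_main} (fed by Propositions~\ref{general_is_regular_sequence} and~\ref{codim_jac}), and then resolve $R/J_F$ by the Andrade--Simis complex built on the Buchsbaum--Rim complex of $M({\bf f})$, reading off the ranks $b_i$, minimality, and the RTY-property. The only discrepancies are cosmetic: the paper obtains the minor identification by a one-line cofactor expansion (no $1/d_i$ Euler factors and no Hilbert--Burch uniqueness are needed, and each such minor uses exactly one $\Theta$-column, not all $n$), the symmetric powers occurring in Buchsbaum--Rim are of the rank-$m$ target rather than rank $n$, and the acyclicity/minimality citation here is the main theorem of \cite{AndSim1981} (minors having $m-1$ columns in common) rather than \cite[Theorem D]{AndSim1986}, which the paper reserves for the linear-arrangement case.
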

\begin{proof}
The gradient ideal $J_F$ of the product $F=f_1\cdots f_m\in R$ is generated by the $m$-minors
 $$\det \left[\begin{array}{cccccc}
 \partial f_1/\partial x_i &	-f_{1}&0&\cdots&0\\
  \partial f_2/\partial x_i &	0&-f_{2}&\cdots&0\\
\vdots & 	\vdots&\vdots&\ddots&\vdots\\
 \partial f_{m-1}/\partial x_i & 	0&0&\cdots&-f_{m-1}\\
 \partial f_m/\partial x_i & 	f_{m}&f_{m}&\cdots&f_{m}
 \end{array}\right],
 $$
 for $1\leq i\leq m$.
 In other words, $J_F$ is the ideal generated by the $m$-minors of $M(\bf f)$ fixing the last $m-1$ columns. 
 
 On the other hand,  by Proposition~\ref{general_is_regular_sequence} and Proposition~\ref{codim_jac},, the required equalities in Proposition~\ref{heights_main} are satisfied. Since $n= (n+m-1)-m +1$, we are fit to apply  the main theorem of \cite{AndSim1981}, giving a free resolution of $R/J_F$ as asserted.
 
 It remains to argue that such a resolution is minimal.
 For this, we note that this resolution is obtained from the Buchsbaum--Rim resolution
 	{\scriptsize$$0\to \bigwedge^{m+n-1} R^{m+n-1}\otimes {\rm Sym}_{n-2}((R^m)^{\ast})\stackrel{\partial_n'}\lar\cdots\stackrel{\partial_3'}\lar \bigwedge^{m+1}R^{m+n-1}\otimes {\rm Sym}_0((R^m)^\ast)\stackrel{\partial_2'}\lar R^{m+n-1}\stackrel{M({\bf f})}\lar R^m, $$} 
such that:

$\bullet$ The entries of $\partial_i'$ ($3\leq i\leq n$) are $\mathbb{Z}$-linear combinations of the entries of $M({\bf f});$

$\bullet$ $\partial_2$ is the composite $\wedge^{m+1}R^{m+n-1} \stackrel{\partial_2'}\lar R^{m+n-1} \stackrel{p}\lar R^n$ where $p$ is the natural projection.

$\bullet$ $\partial_2'(e_{{\bf i}})=\sum_{j=1}^{m+1} \pm1 \det M({\bf f})_{{\bf i}\setminus i_{j}}e_{i_j}$, 
where  $M({\bf f})_{{\bf i}\setminus i_{j}}$ is the $m\times m$ matrix of $M({\bf f})$ indexed by the columns of the set ${\bf i}\setminus i_{j}$.

From these properties, it is clear that the resolution is minimal, and, in fact, graded.
\end{proof}

\begin{Example}\rm Let $n=3.$  In this case, fixing the  following bases 
	$$\{e_1,\ldots,e_{m+2}\} \quad\mbox{and}\quad\{e_2\wedge \cdots \wedge e_{m+2},\cdots, e_1\wedge \cdots\wedge \widehat{e}_{i}\cdots \wedge e_{m+2}, e_1\wedge \cdots \wedge e_{m+1} \}$$
	for  $R^{m+2}$ and $\bigwedge^{m+1}R^{m+2},$ respectively,  the  matrix representing  $\partial'_2$ is an $(m+2)\times(m+2)$ anti-symmetric matrix. In particular, the Buchsbaum-Rim complex can be written as 
	$$0\to   {\rm Sym}_{1}((R^m)^{\ast})\stackrel{M(\bf f)^t}\lar\bigwedge^{m+1}R^{m+2}\stackrel{\partial_2'}\lar R^{m+2}\stackrel{M({\bf f})} \lar R^m.$$
Consequently, the minimal graded free resolution of $J_F$ is
	$$0\to \bigoplus R(-2D+d_j-1) \stackrel{M(\bf f)^t}\lar R(-2D)^3\oplus R(-(2D-1))^{m-1}\stackrel{\partial_2}\lar R(-D)^{3}\to J_{F}\to 0,$$
	where	$D=\deg F-1$, and we may take the syzygies of degree $2D$ to be generated by the Koszul relations of the partial derivatives of $F$.
\end{Example}

\section{Arbitrary forms}\label{Section4}
A natural question arises as to whether the RTY-problem has an inductive partition-like facet.
Namely, given a set of forms $\mathbf{f}:=\{f_1,\ldots,f_m\}$, can we aproach the problem by partitioning the set into, say, subsets $\{f_1,\ldots, f_j\}$ and $\{f_{j+1},\ldots, f_m\}$?
If $j=1$ the idea leads us naturally to consider the case of two forms only, with $f:=f_1$ and $g:=f_2 \cdots f_m$.
The first subsection below takes a look at this situation.

Throughout, as previously, $R:=\K[x_1,\ldots, x_n]$ denotes a standard graded polynomial ring over a field $\K$.

\subsection{Two forms}

We give it a start with the following
\begin{Lemma}\label{two_forms_lemma}
%	{\rm (${\rm char}(\K)$ does not divide either $e$ or $d+e$)}
	Let $f,g\in R$ be non-vanishing relatively prime forms of respective degrees $d\leq e$, {\rm (}$d\geq 2${\rm )}.
Set $F:=fg$.
Then$:$
\begin{enumerate}
\item[{\rm (a)}]  If ${\rm char}(\K)$ does not divide either $e$ or $d+e$, then ${\rm indeg}(\syz (J_F))\geq {\rm indeg}(\syz (J_g)).$
\item[{\rm (b)}] If $g$ is smooth then ${\rm indeg}(\syz (J_F))\geq e-1\geq  \displaystyle\left\lfloor \frac{d+e}{2}\right\rfloor -1$.
\item[{\rm (c)}]  If $f$ is smooth then $g^2\in J_F^{\rm sat}$.
\end{enumerate}
\end{Lemma}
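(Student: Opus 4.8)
The statement to prove is \lemref{two_forms_lemma}, a three-part lemma about the product $F=fg$ of two relatively prime forms. The unifying idea is to exploit the Jacobian matrix of the \emph{pair} $\{f,g\}$ and its interaction with the $2\times 1$ syzygy matrix $\mathfrak{D}(\{f,g\})=[-f,\ g]^t$, along the lines of \lemref{conjugation}. Concretely, the key identity is that a partial derivative of $F$ satisfies $\partial F/\partial x_i = g\,\partial f/\partial x_i + f\,\partial g/\partial x_i$, so $J_F$ sits inside the ideal of $2\times 2$ minors of the $2\times 3$ matrix $M(\{f,g\})=\left[\begin{array}{c|c}\Theta(\{f,g\})&\mathfrak{D}(\{f,g\})\end{array}\right]$, with $J_F$ itself being the ideal of $2$-minors fixing the last column. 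The relations between syzygies of $J_F$, syzygies of $J_g$, and the structure of $M(\{f,g\})$ are what drive all three parts.

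For part (a): a syzygy $\theta$ of $J_F$ means $\sum_i \theta_i(g\,\partial f/\partial x_i + f\,\partial g/\partial x_i)=0$, i.e.\ $g\,\theta(f)=-f\,\theta(g)$ in $R$, where $\theta(f):=\sum_i\theta_i\,\partial f/\partial x_i$. Since $f$ and $g$ are relatively prime, $f\mid \theta(f)$, say $\theta(f)=f\cdot c$ for some form $c$; then $\theta(g)=-g\cdot c$. Now I would use Euler's formula: $\theta(f)$ evaluated via $\theta=\theta_E=\sum x_i\partial/\partial x_i$ gives $\theta_E(f)=d f$, and similarly $\theta_E(g)=ef$. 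The plan is to correct $\theta$ by a suitable multiple of $\theta_E$ to kill $c$: consider $\theta' := \theta - \tfrac{c}{d}\,\theta_E$ when $\mathrm{char}(\K)\nmid d$, or more carefully a combination that uses the hypothesis that $\mathrm{char}(\K)$ divides neither $e$ nor $d+e$. One checks $\theta'(f)=fc - \tfrac{c}{d}\cdot df = 0$ and simultaneously needs $\theta'(g)=0$; balancing both forces the particular divisibility hypotheses, and the upshot is that a syzygy of $J_F$ of degree $\rho$ produces (after this correction, which does not lower degree) a syzygy of $J_g$ of degree $\rho$, possibly together with a ``Koszul-type'' piece. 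Hence $\mathrm{indeg}(\syz J_F)\geq \mathrm{indeg}(\syz J_g)$. The delicate point is handling the case $c\neq 0$ cleanly and confirming no degree drop — this is where the two separate non-divisibility conditions ($e$ and $d+e$) enter, and getting the bookkeeping exactly right is the main obstacle of part (a).

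For part (b): if $g$ is smooth, then $J_g$ is $\fm$-primary (the Jacobian ideal of a smooth hypersurface has height $n$), and its syzygy module has no linear syzygies of low degree — more precisely, $\syz(J_g)$ has initial degree at least $e-1$ because a smooth form has an essentially ``maximal'' Milnor algebra whose syzygies first appear in the Koszul-comparison degree; I would quote or re-derive that $\mathrm{indeg}(\syz J_g)\geq e-1$ (the first syzygies of $n$ general forms of degree $e-1$, which the partials of a smooth $g$ mimic in low degrees, are the Koszul ones in degree $e-1$). Combined with part (a) this gives $\mathrm{indeg}(\syz J_F)\geq e-1$, and since $e\geq d$ one has $e-1 = \tfrac{e+e}{2}-1 \geq \tfrac{d+e}{2}-1\geq\lfloor\tfrac{d+e}{2}\rfloor-1$, which is the stated inequality; the floor is harmless since $e-1$ is an integer $\geq (d+e)/2-1$. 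The work here is mostly in justifying the bound $\mathrm{indeg}(\syz J_g)\geq e-1$ for smooth $g$, which should follow from the fact that the partials of $g$ form a regular sequence (smoothness $\Rightarrow$ $J_g$ has height $n$ $\Rightarrow$ the $n$ partials, of degree $e-1$, are a regular sequence) so the first syzygies are Koszul, of degree $2(e-1)-(e-1)=e-1$.

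For part (c): if $f$ is smooth, I want $g^2\in J_F^{\mathrm{sat}}$, i.e.\ $\fm^N g^2\subset J_F$ for some $N$. The plan is to work locally: away from $V(f)$, $F$ and $g$ differ by the unit $f$ up to the relation $\partial F/\partial x_i = g\,\partial f/\partial x_i+f\,\partial g/\partial x_i$, so on $D(f)$ the ideal $(J_F)_f$ contains $g\cdot(J_f)_f$; since $f$ is smooth, $J_f$ is $\fm$-primary, so $(J_f)_f=R_f$, giving $g\in (J_F)_f$, hence $g^2\in (J_F)_f$. On the other side, on $D(g)$: since $g$ appears as a factor of every generator of... no — rather, from $\partial F/\partial x_i = g\,\partial f/\partial x_i + f\,\partial g/\partial x_i$ and $f$ smooth again, one needs $g^2 \in (J_F)_g$; here $g^2\,\partial f/\partial x_i = g(\partial F/\partial x_i - f\,\partial g/\partial x_i)=g\,\partial F/\partial x_i - f\cdot g\,\partial g/\partial x_i$, and $g\,\partial g/\partial x_i = \partial F/\partial x_i - g\,\partial f/\partial x_i$... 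I would instead directly verify that $g^2\,\partial f/\partial x_i\in J_F + (g^2)\cap$... The cleanest route: show $g^2\,\frac{\partial f}{\partial x_i}\in J_F$ for all $i$ up to saturation by checking it at every prime of $R$ not containing $\fm$, splitting into primes containing $f$ (use the $g$-side argument) and primes not containing $f$ (use that $J_f$ generates the unit ideal locally there by smoothness), then conclude via the identity $g^2 = \sum \text{(unit)}\cdot g^2\,\partial f/\partial x_i\cdot(\text{something})$ coming from $f$ smooth so that $\langle \partial f/\partial x_1,\dots,\partial f/\partial x_n,f\rangle=\fm$-primary, whence $g^2\in\langle g^2\partial f/\partial x_i\rangle^{\mathrm{sat}}\subset J_F^{\mathrm{sat}}$. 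The main obstacle in (c) is organizing the local-to-global argument so that the factor $g^2$ (rather than just $g$) emerges naturally — one power of $g$ to clear the $D(f)$ obstruction and effectively one more because on $V(f)$ the derivative $\partial F/\partial x_i$ is divisible by $g$, so $g^2\partial f/\partial x_i \equiv -f g \partial g/\partial x_i \pmod{g\,\partial F/\partial x_i}$ already lies in $J_F$ modulo terms killed by $f$-saturation.
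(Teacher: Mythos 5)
Your proposal follows the paper's proof in all three parts: for (a), relative primality places a syzygy $\theta$ of $J_F$ in $\mathrm{Derlog}(g)$, and subtracting its Euler component gives a syzygy of $J_g$ of the same degree (the one slip is your normalization — you must kill $\theta(g)=-cg$, so the correction is $\theta+\tfrac{c}{e}\theta_E$, not $\theta-\tfrac{c}{d}\theta_E$, which is exactly where $\mathrm{char}(\K)\nmid e$ enters, while $\mathrm{char}(\K)\nmid (d+e)$ rules out the degenerate case $\theta\in R\theta_E$ via Euler's relation for $F$); for (b), smoothness of $g$ makes its partials a regular sequence, so the first syzygies of $J_g$ are Koszul in degree $e-1$ and (a) transfers the bound; for (c), multiplying $F_{x_i}=f_{x_i}g+fg_{x_i}$ by $g$ gives $g^2f_{x_i}=gF_{x_i}-Fg_{x_i}\in J_F$, and $\fm$-primariness of $J_f$ finishes. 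Your local-to-global framing of (c) is an unnecessary detour around this one-line identity, which you in fact write down midway through.
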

\begin{proof} (a)
One has:
	\begin{equation}\label{partial_sums_1}
		F_{x_i}=f_{x_i}\cdot g+f\cdot g_{x_i},
	\end{equation}
	for every $1\leq i\leq n$.
	
	Let $(s_1\cdots s_n)^t\in \syz (J_F)$ be a nonzero syzygy.
	By (\ref{partial_sums_1}), one has $$f(s_1g_{x_1}+\cdots+s_ng_{x_n})+g(s_1f_{x_1}+\cdots+s_nf_{x_n})=0.$$
	Since $\gcd(f,g)=1$, then
	\begin{equation}\label{derlog_relations_G}
		s_1g_{x_1}+\cdots+s_ng_{x_n}=h\cdot g,
	\end{equation}
	for a suitable form $h\in R$ of degree $\deg s_i-1$.
	Thus, $(s_1\cdots s_n)^t\in \mathrm{Derlog}(g)= \mathrm{Syz}(J_g)\oplus R\theta_E$, where $\theta_E:=x_1\partial_{x_1}+\cdots+x_n\partial_{x_n}$, the Euler vector.
	Therefore, $\displaystyle (s_1-\frac{1}{e} h x_1,\ldots, s_n-\frac{1}{e} h x_n)^t\in \mathrm{Syz}(J_g)$.
	Now, if some $s_i-\frac{1}{e} h x_i$ does not vanish, then $\deg(s_i)=\deg (s_i-\frac{1}{e} h x_i)$, and hence $\deg(s_i)\geq {\rm indeg}(\syz (J_g))$, for all $i$, thus proving the statement.
	Else,  $s_i=\frac{1}{e} h x_i$ for all $i$.
	Since $h\neq 0$ in this situation, this means that $(x_1\cdots x_n)^t\in \syz (J_F)$. But since $d+e\neq 0$ by assumption, then the Euler relation implies that $F=0$ -- an absurd.
	
(b)	For a complete intersection, the Koszul relations generate the first syzygies. The  inequality is then obvious since $e=\left\lfloor \frac{2e}{2}\right\rfloor\geq  \displaystyle\left\lfloor \frac{d+e}{2}\right\rfloor$.

(c)  By (\ref{partial_sums_1}), multiplying through by $g$ yields $f_{x_i}g^2\in J_F$ for every $1\leq i\leq n$.
Since $f$ is a smooth form of degree $d\geq 2$, $J_f$ is $\fm$-primary, and therefore $g^2\in (J_F)^{\rm sat}$.
\end{proof}

\begin{Proposition}\label{two_forms_all}
	{\rm (${\rm char}(\K)$ does not divide any of $d,e,d+e$)}
	Let $f,g\in R$ be non-vanishing relatively prime forms of respective degrees $d\leq e$, {\rm (}$d\geq 2${\rm )} Suppose that$:$
	\begin{enumerate}
		\item[{\rm (i)}]  $f$ is smooth.
		\item[{\rm (ii)}] For every prime ideal $\wp\supset \langle f,g\rangle$ of height $n-1$, the Jacobian matrix of $\langle f,g\rangle$ has maximal rank over $R/\wp$.
	\end{enumerate}		
		Then $\{f,g\}$ satisfy the {\rm RTY}-property.
\end{Proposition}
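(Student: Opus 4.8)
The plan is to establish the {\rm RTY}-property in its saturation guise. Since $\depth R/J_F=0$ is equivalent to $\fm\in\mathrm{Ass}(R/J_F)$, and the latter to $J_F\subsetneq(J_F)^{\mathrm{sat}}$ (because $H^{0}_{\fm}(R/J_F)=(J_F)^{\mathrm{sat}}/J_F$), it suffices to produce one element of $(J_F)^{\mathrm{sat}}\setminus J_F$. Hypothesis~(i) provides a candidate: by Lemma~\ref{two_forms_lemma}(c) the smoothness of $f$ gives $g^{2}\in(J_F)^{\mathrm{sat}}$ (indeed $f_{x_i}g^{2}\in J_F$ for every $i$ and $J_f$ is $\fm$-primary, so $\fm^{N}g^{2}\subseteq J_f g^{2}\subseteq J_F$ for $N\gg 0$). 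Thus the whole statement reduces to the single claim $g^{2}\notin J_F$, and this is precisely where hypothesis~(ii) has to be used.

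To prove $g^{2}\notin J_F$ I would argue by contradiction. Write $g^{2}=\sum_{i=1}^{n}a_iF_{x_i}$ with $\deg a_i=e-d+1$, substitute $F_{x_i}=f_{x_i}g+f g_{x_i}$, and use $\gcd(f,g)=1$ exactly as in the proof of Lemma~\ref{two_forms_lemma}(a): one gets a form $b$ of degree $e-d$ with $g-\sum a_if_{x_i}=bf$ and $\sum a_ig_{x_i}=bg$. The Euler identities for $f$ and for $g$ — valid because $\mathrm{char}(\K)$ divides none of $d,e,d+e$ — convert these into two facts: first, $g=\sum a'_if_{x_i}\in J_f$ for suitable $a'_i$ of degree $e-d+1$ (so we may assume $g\in J_f$, otherwise we are already done); and second, a \emph{nonzero} vector $c=(c_1,\dots,c_n)$ with $\deg c_i=e-d+1$ such that $\sum c_ig_{x_i}=0$ while $G:=\sum c_if_{x_i}=g-\mu f\neq0$ for a form $\mu$ of degree $e-d$. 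In other words, $c$ is a syzygy of the partials of $g$ that the partials of $f$ carry to the nonzero element $G\in\langle f,g\rangle$, of the unexpectedly low coefficient-degree $e-d+1$.

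The crux — and the step I expect to be the main obstacle — is to show that such a $c$ cannot exist, and this is where hypothesis~(ii) enters. I would first reformulate~(ii) as the height statement $\Ht\langle I_2(\Theta(\{f,g\})),f,g\rangle=n$: for $n=2$ it is automatic (already $\Ht\langle f,g\rangle=2$), and for $n\ge3$ it follows from~(ii) by catenarity, since a height-$\le n-1$ minimal prime of $\langle I_2(\Theta(\{f,g\})),f,g\rangle$ would lie in a height-$(n-1)$ prime over $\langle f,g\rangle$ on which $\Theta(\{f,g\})$ drops rank. For $n\ge3$ this already forces $g$ reduced: a repeated factor $h$ of $g$ divides every entry of the gradient of $g$, hence every $2\times2$ minor of $\Theta(\{f,g\})$, so $\langle I_2(\Theta(\{f,g\})),f,g\rangle\subseteq\langle h,f\rangle$ has height $2<n$. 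The remaining work is to turn the syzygy $c$ into a violation of the height-$n$ property; the natural route is to localize at a height-$(n-1)$ prime $\wp\supseteq\langle f,g\rangle$ — where, by~(ii), $\Theta(\{f,g\})$ has rank $2$, hence $f$ and $g$ form part of a regular system of parameters of the regular local ring $R_{\wp}$ (Jacobian criterion) — and to extract from $\Theta(\{f,g\})\,c^{t}=(G,0)^{t}$ a local relation incompatible with $f,g$ being linearly independent modulo $\wp^{2}R_{\wp}$, using the degree constraint $\deg c_i=e-d+1$ and the fact that the Koszul syzygies $g_{x_j}e_i-g_{x_i}e_j$ of the partials of $g$ already carry the partials of $f$ into $I_2(\Theta(\{f,g\}))$. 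Once $g^{2}\notin J_F$ is secured, $g^{2}\in(J_F)^{\mathrm{sat}}\setminus J_F$ yields $\depth R/J_F=0$, i.e.\ $\{f,g\}$ satisfies the {\rm RTY}-property.
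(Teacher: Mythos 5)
Your reduction is fine as far as it goes: $g^{2}\in(J_F)^{\mathrm{sat}}$ follows from Lemma~\ref{two_forms_lemma}(c), and your analysis of the hypothetical relation $g^{2}=\sum a_iF_{x_i}$ (yielding $g\in J_f$ together with a nonzero $c$ with $\sum c_ig_{x_i}=0$ and $\sum c_if_{x_i}=g-\mu f\neq 0$) is correct. But the decisive step --- ruling out such a $c$ by means of hypothesis~(ii) --- is only announced, not carried out, and the route you sketch does not work. Your data say that the \emph{column} vector $c$ lies in the kernel of $\Theta$ modulo any prime $\wp\supseteq\langle f,g\rangle$ (both coordinates of $\Theta c^{t}=(g-\mu f,\,0)^{t}$ lie in $\langle f,g\rangle\subseteq\wp$). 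For $n\geq 3$ a $2\times n$ matrix of rank $2$ over $R/\wp$ has a kernel of rank $n-2>0$, so this is perfectly compatible with~(ii) and yields no contradiction; nor does "linear independence of $f,g$ modulo $\wp^{2}R_{\wp}$" obviously conflict with the degree-$(e-d+1)$ syzygy $c$. Note also that such a $c$ genuinely can exist when $g$ is not smooth (e.g.\ $J_g$ may well have a syzygy in degree $e-d+1$), so whatever argument closes the gap must exploit the full strength of~(ii) together with the extra identity $\sum c_if_{x_i}=g-\mu f$; you have not done this, and it is the whole content of the proposition.

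For comparison, the paper argues quite differently. It first reduces (via Proposition~\ref{heights_main} and the Andrade--Simis resolution of Theorem~\ref{main_general_forms}) to showing that $\mathbb J:=\langle I_2(\Theta),f\rangle$ is $\fm$-primary. Given a prime $\wp\supseteq\mathbb J$ of height $\leq n-1$, smoothness of $f$ gives some $f_{x_1}\notin\wp$; an Euler-relation computation shows $egf_{x_1}\in\wp$, hence $g\in\wp$ and so $\langle f,g\rangle\subseteq\wp$; and then the \emph{row} relation
\[
\left[\begin{matrix}-g_{x_1} & f_{x_1}\end{matrix}\right]\Theta\equiv 0 \pmod{\wp}
\]
contradicts the rank-$2$ hypothesis of~(ii) unless $f_{x_1}\in\wp$. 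The point is that a nontrivial left null vector of a $2\times n$ matrix \emph{does} force the rank below $2$, whereas a right null vector does not; this is exactly the distinction your sketch misses. If you want to salvage your saturation strategy, you would need an argument of this row-relation type (or the paper's reduction to $\fm$-primality of $\mathbb J$) to finish; as written, the proof is incomplete at its crucial step.
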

\begin{proof}
 Let $\Theta$ denote the Jacobian matrix of $\langle f,g \rangle$. Since $\{f,g\}$ is a regular sequence, by Proposition~\ref{heights_main} and Theorem~\ref{main_general_forms}, it suffices to  prove that the  homogeneous ideal $\mathbb J:=\langle  I_2(\Theta), f\rangle$ is $\fm$-primary.

Suppose otherwise and pick a prime ideal $\mathbb J\subset\wp\subset R$ of height at most $n-1$.
Then, since $f\in \wp$ is assumed to be smooth, there is a partial derivative, say, $f_{x_1}$ not belonging to $\wp$.
At the other end,  $g_{x_i}f_{x_1}-g_{x_1}f_{x_i}\in \mathbb J\subset \wp$, for every $i$.
Using the Euler relation of $g$, we can write
{\small
	\begin{eqnarray*}
		egf_{x_1}&=&x_1f_{x_1}g_{x_1}+x_2f_{x_1}g_{x_2}+\cdots + x_nf_{x_1}g_{x_n}\\
		&=& x_1f_{x_1}g_{x_1}+ [x_2(f_{x_1}g_{x_2}-f_{x_2}g_{x_1})+ x_2f_{x_2}g_{x_1}]+\cdots +[x_n(f_{x_1}g_{x_n}-f_{x_n}g_{x_1}) +x_nf_{x_n}g_{x_1}]\\
		&=& g_{x_1}(x_1f_{x_1}+\cdots + x_nf_{x_n}) + x_2(f_{x_1}g_{x_2}-f_{x_2}g_{x_1}) + \cdots + x_n(f_{x_1}g_{x_n}-f_{x_n}g_{x_1})\\
		&=& dg_{x_1}f + x_2(f_{x_1}g_{x_2}-f_{x_2}g_{x_1}) + \cdots + x_n(f_{x_1}g_{x_n}-f_{x_n}g_{x_1}).
	\end{eqnarray*}
}
Clearly, the last expression belongs to $\mathbb J\subset \wp.$ Since $f_{x_1}\notin \wp$ then $g\in\wp$.
We thus conclude that $\langle f,g\rangle\subset \wp$.
For such a prime, we can apply the main standing hypothesis.
Namely, note that the following matrix product
$$\left[\begin{matrix}
	-g_{x_1} & f_{x_1} 
\end{matrix}
\right]
\Theta= 
\left[\begin{matrix}
	0 & f_{x_1}g_{x_2}-f_{x_2}g_{x_1} & \dots & f_{x_1}g_{x_n}-f_{x_n}g_{x_1}
\end{matrix}
\right]
$$
vanishes modulo $\wp$ because $I_2(\Theta)\subset \wp$.
But, by the standing condition of this item,  $\Theta$ has maximal rank (i. e., $2$) over $\wp$. Therefore, $f_{x_1} \in \wp$, since $\wp$ is prime; a contradiction.
\end{proof}
\begin{Remark}\rm
One can give an alternative proof of the above proposition, by noting that $g\mathbb J\subset J_F$. Indeed, again by (\ref{partial_sums_1}) one obtains the relations
$$g_{x_i}F_{x_j}-g_{x_j}F_{x_i}=(g_{x_i}f_{x_j}-g_{x_j}f_{x_i})g,$$
for $1\leq i<j\leq n$.
Therefore, $g\,I_2(\Theta)\subset J_F$.
At the other end, $gf=F\in J_F$ by the Euler relation of $F$.
This shows the above inclusion.
Now, collecting this assertion and the argument in the proposition, one gets $g\in (J_F)^{\rm sat}$.
Clearly, $g\notin J_F$ for degree reasons, since ${\rm indeg}(J_F)=e+d-1$.
Therefore, $\fm$ is an associated prime of $R/J_F$, as was to be shown.
\end{Remark}

\begin{Corollary}\label{theorem_planar} Let $f, g \in R:=\mathbb K[x,y,z]$ with $\deg(f)\geq 1$, $\deg(g)\geq 2$, and $\gcd(f,g)=1$. 
	Suppose that $V(f)\subset \mathbb{P}_{\K}^2$ is smooth, $V(g)\subset \mathbb{P}_{\K}^2$ is not a cone, $V(f)$ and $V(g)$ are nowhere tangent.
	%, and $V(f)\cap {\rm Sing}(V(g))=\emptyset$. 
	Then $\{f,g\}$ satisfy the {\rm RTY}-property. {\rm (}Hence, $F=fg$ is not a free divisor{\rm )}.
\end{Corollary}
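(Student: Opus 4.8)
The plan is to deduce Corollary~\ref{theorem_planar} directly from Proposition~\ref{two_forms_all} by checking that its two hypotheses hold under the stated geometric conditions, in the case $n=3$. First I would note that if $\deg(f)=1$ the claim is essentially trivial (a smooth linear form together with any form relatively prime to it): indeed $V(f)$ is a hyperplane, and one can either invoke the product/arrangement-style argument or simply observe that after a change of coordinates $f=x$, so $F_x = g + x g_x$, $F_y = x g_y$, $F_z = x g_z$, and multiplying through one sees $g\cdot x \in J_F$ while also $g\cdot g \in J_F^{\rm sat}$ via the argument in Lemma~\ref{two_forms_lemma}(c), forcing $\mathfrak m$ to be associated to $R/J_F$ for degree reasons. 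So the substantive case is $\deg(f)=d\geq 2$, where Proposition~\ref{two_forms_all} applies provided ${\rm char}(\K)$ does not divide $d$, $e$, or $d+e$; I would either assume characteristic zero (consistent with the running conventions of the paper) or add this proviso.

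The heart of the proof is translating the geometric data into hypotheses (i) and (ii) of Proposition~\ref{two_forms_all}. Hypothesis (i), that $f$ is smooth, is exactly the assumption that $V(f)\subset\mathbb P_{\K}^2$ is smooth. For hypothesis (ii), I must show that for every height-$2$ prime $\wp\supset\langle f,g\rangle$ the Jacobian matrix $\Theta$ of $\langle f,g\rangle$ has rank $2$ modulo $\wp$. In $R=\K[x,y,z]$ a height-$2$ homogeneous prime containing $\langle f,g\rangle$ corresponds to a point $P\in V(f)\cap V(g)\subset\mathbb P^2$ (together with the irrelevant prime, which is excluded since it has height $3$). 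So the condition reduces to: at every common point $P$ of the two curves, the $2\times 3$ matrix of partials $\begin{bmatrix}f_x&f_y&f_z\\ g_x&g_y&g_z\end{bmatrix}$ evaluated at $P$ has rank $2$. Rank $<2$ at $P$ means either some row vanishes at $P$ — the row of $g$ cannot vanish identically at $P$ because $V(g)$ is not a cone, hence $V(g)$ has no singular point, in particular none on $V(f)$; the row of $f$ cannot vanish because $V(f)$ is smooth — or the two (nonzero) gradient rows are proportional at $P$, which is precisely the condition that $V(f)$ and $V(g)$ are tangent at $P$. Since we assume $V(f)$ and $V(g)$ are nowhere tangent, this cannot happen. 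Hence $\Theta$ has maximal rank $2$ over every such $\wp$, giving (ii).

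With (i) and (ii) verified, Proposition~\ref{two_forms_all} yields the RTY-property for $\{f,g\}$, and the final parenthetical remark (that $F=fg$ is not a free divisor when $n=3$) follows from the general principle recorded in the Introduction: in dimension $3$, failure of freeness is equivalent to $\depth R/J_F=0$, which is the RTY-property. I would spell this out by recalling that $F$ being a free divisor in $\K[x,y,z]$ forces ${\rm Derlog}(F)$, equivalently $\syz(J_F)$, to be free of rank $2$, hence $R/J_F$ to have projective dimension $2$ and $\depth R/J_F=1$ by Auslander--Buchsbaum; since RTY says $\depth R/J_F=0$, $F$ cannot be free.

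The step I expect to require the most care is the reduction of hypothesis (ii) to a statement purely about the finitely many points of $V(f)\cap V(g)$ and the clean identification of ``rank drops at $P$'' with ``either a curve is singular at $P$ or the curves are tangent at $P$'': one must be slightly careful that ``$V(g)$ is not a cone'' genuinely forbids singular points of $V(g)$ (a plane curve whose defining form has a common zero of all partials would be a cone, since the gradient vanishing at a point forces, via Euler, that point to be a singularity and a plane curve singular ``at the vertex'' is a cone — more precisely, a reduced plane curve with a singular point where \emph{all} partials vanish simultaneously with the form is a cone over that point), and that ``nowhere tangent'' is exactly proportionality of the two gradients at a common smooth point. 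None of this is hard, but it is where the geometric hypotheses must be converted faithfully into the algebraic hypothesis (ii); everything after that is a direct appeal to Proposition~\ref{two_forms_all}.
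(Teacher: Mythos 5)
Your overall route---deduce the corollary from Proposition~\ref{two_forms_all} by checking hypotheses (i) and (ii)---is exactly what the paper intends (it gives no separate argument), but your verification of (ii) rests on a false claim: ``$V(g)$ is not a cone, hence $V(g)$ has no singular point.'' Not being a cone only means that $g_x,g_y,g_z$ are $\K$-linearly independent (equivalently, $g$ cannot be written in two variables after a linear change of coordinates); it does not forbid singular points: an irreducible nodal cubic such as $g=y^2z-x^3-x^2z$ is not a cone, yet $g$ and all three partials vanish at $(0:0:1)$. Your supporting remark (``a reduced plane curve with a singular point where all partials vanish simultaneously with the form is a cone over that point'') would, via Euler's relation, make every singular plane curve a cone in characteristic zero. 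Consequently your dichotomy at a common point $P$ (``a gradient row vanishes'' versus ``the rows are proportional'') is not settled by the stated hypotheses when $V(g)$ is singular at $P$: ``nowhere tangent,'' read as the absence of a common tangent line, does not exclude such a point, and there the evaluated Jacobian has rank $\leq 1$, so hypothesis (ii) of Proposition~\ref{two_forms_all} fails. Note that the paper's remark immediately after the corollary explicitly allows $V(g)$ to be non-smooth at the minimal primes of $\langle f,g\rangle$, so this is precisely the case your argument must confront---either by reading ``nowhere tangent'' as intersection multiplicity one at every common point (which forces $V(g)$ smooth along $V(f)\cap V(g)$, but then the hypotheses are literally transversality), or by a separate argument (say via Theorem~\ref{two__forms_one_smooth}) at common points where $V(g)$ is singular.

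The case $\deg f=1$ is also not ``essentially trivial,'' and your sketch cannot be completed as written because it never uses the non-cone hypothesis; without it the degree-one statement is false: $f=x$ and $g=yz$ are coprime, meet transversally, $V(f)$ is smooth, yet $F=xyz$ is a free divisor, so RTY fails (this also shows $\deg f=1$ is genuinely outside the scope of Proposition~\ref{two_forms_all}, whose proof needs both degrees $\geq 2$). Concretely, two steps are missing. First, $g\notin J_F$ is not ``for degree reasons'': here $\deg g={\rm indeg}(J_F)$, and in the example above $g=yz\in J_F=\langle yz,xz,xy\rangle$; proving $g\notin J_F$ requires exactly the non-cone hypothesis (a relation $g=\alpha(g+xg_x)+\beta xg_y+\gamma xg_z$ with scalars forces $g_x+\beta g_y+\gamma g_z=0$, i.e.\ $g$ is a cone). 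Second, what your computation yields is only $xg\in J_F$ (Euler) and $g^2\in J_F$, i.e.\ $\langle x,g\rangle\, g\subseteq J_F$; since $\langle x,g\rangle$ has height two, this does not give $\mathfrak{m}^k g\subseteq J_F$, so $g\in J_F^{\rm sat}$ is not established---and here $x$ lies in every minimal prime of $J_F$ (all singular points of $F$ sit on $V(x)$), so saturating with respect to $x$ is not the same as saturating with respect to $\mathfrak{m}$. Finally, a brief word is needed on why the ordering $\deg f\leq \deg g$ assumed in Proposition~\ref{two_forms_all} is harmless for the corollary, which imposes no such ordering.
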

Note that, in this corollary, since $V(g)$ is not required to be locally smooth at the minimal primes of $\langle f,g \rangle$, the hypotheses are weaker than transversal intersection.
%\end{Question}

The next result imposes no restriction in how the two corresponding divisors intersect. 

\begin{Theorem}\label{two__forms_one_smooth} {\rm ($\mathrm{char}(\K)\nmid d$)} Let $f,g\in R:=\mathbb K[x_1,\ldots,x_n]$ be non-vanishing relatively prime forms. 	Suppose that $f$ defines a smooth hypersurface of degree $d$, and in addition, that one of the following conditions is satisfied$:$
	\begin{itemize}
		\item[(i)] $\deg g=\deg f=d\geq 2$ and ${\rm indeg}(\syz (J_g))\geq 2$.
		\item[(ii)] $g\notin J_f$.
	\end{itemize}
	Then $\{f,g\}$ satisfy the {\rm RTY}-property. %$$\mathrm{depth}(R/J_{fg})=0.$$	
\end{Theorem}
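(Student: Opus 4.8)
The plan is to reduce the whole statement to the single assertion that $g^2 \notin J_F$. Since $f$ is smooth, Lemma~\ref{two_forms_lemma}(c) already gives $g^2 \in J_F^{\mathrm{sat}}$; hence once we know $g^2 \notin J_F$ we will have $J_F \subsetneq J_F^{\mathrm{sat}}$, i.e. $H^0_{\fm}(R/J_F)\neq 0$, which is precisely $\depth R/J_F = 0$, the RTY-property. So the argument will consist of assuming $g^2 \in J_F$ and deriving a contradiction under either (i) or (ii).

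First I would unwind the hypothetical membership. Writing $g^2 = \sum_{i} h_i F_{x_i}$ with homogeneous coefficients and substituting $F_{x_i} = f_{x_i}g + fg_{x_i}$ from \eqref{partial_sums_1} yields $g\big(g-\sum_i h_i f_{x_i}\big) = f\big(\sum_i h_i g_{x_i}\big)$; coprimality of $f$ and $g$ in the UFD $R$ then lets me write $g - \sum_i h_i f_{x_i} = qf$ for a homogeneous form $q$, and cancelling $f$ produces the companion relation $qg = \sum_i h_i g_{x_i}$. So I will be working with the pair of identities $g = \sum_i h_i f_{x_i} + qf$ and $qg = \sum_i h_i g_{x_i}$.

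For case (ii) the conclusion is then immediate: since $\mathrm{char}(\K)\nmid d$, Euler's formula puts $f = \tfrac{1}{d}\sum_i x_i f_{x_i}\in J_f$, so the first identity forces $g \in \langle f_{x_1},\dots,f_{x_n},f\rangle = J_f$, contradicting $g\notin J_f$. For case (i), where $\deg g = \deg f = d$, a degree count shows each $h_i$ is linear and $q$ is a scalar, and I would split on whether $q$ vanishes. If $q=0$, the second identity exhibits $(h_1,\dots,h_n)$ as a linear (degree-$1$) syzygy of the partials of $g$, which must vanish because $\mathrm{indeg}(\syz(J_g))\geq 2$; then the first identity gives $g=0$, absurd. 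If $q\neq 0$, subtracting a suitable multiple of the Euler derivation of $g$ shows that $\big(h_i - \tfrac{q}{d} x_i\big)_i$ is again a linear syzygy of $J_g$, hence zero, so $h_i = \tfrac{q}{d} x_i$; plugging back into the first identity and using Euler's formula for $f$ yields $g = 2qf$, which is impossible since $\gcd(f,g)=1$ and $\deg f = d\geq 2$ (and forces $g=0$ if $\mathrm{char}(\K)=2$). Either branch gives the contradiction, so $g^2\notin J_F$ and we are done.

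The only genuinely delicate point is the degree bookkeeping in case (i): one must verify that the coefficients $h_i$ extracted from $g^2 \in J_F$ are really linear forms, so that $\big(h_i-\tfrac{q}{d}x_i\big)_i$ lands in degree $1$ of $\syz(J_g)$ and is killed by the hypothesis $\mathrm{indeg}\geq 2$. Everything else is formal manipulation with the Leibniz rule, Euler's identity (legitimate because $\mathrm{char}(\K)\nmid d = \deg g$), and unique factorization in $R$.
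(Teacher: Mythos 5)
Your proposal is correct and follows essentially the same route as the paper's proof: reduce to showing $g^2\in J_F^{\rm sat}\setminus J_F$ via Lemma~\ref{two_forms_lemma}(c), extract the two identities $g=\sum_i h_i f_{x_i}+qf$ and $qg=\sum_i h_i g_{x_i}$ from a hypothetical relation $g^2\in J_F$ using coprimality, then kill case (i) with the linear-syzygy/indeg argument and case (ii) with Euler's formula for $f$. Your explicit split into $q=0$ and $q\neq 0$ is only a cosmetic repackaging of the paper's treatment of the possibly vanishing syzygy $\bigl(h_i-\tfrac{\alpha}{d}x_i\bigr)_i$.
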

\begin{proof} Set $F:=fg$ and $\fm:=\langle x_1,\ldots,x_n\rangle$.
	
We will show that, under either (i) or (ii), there exists a power of $g$, not belonging to $J_F$, driving a power of $\fm$ into $J_F$. This clearly proves the assertion.
	
First, since $f$ is smooth, Lemma~\ref{two_forms_lemma} (c) implies that $g^2\mathfrak{m}^{r}\subseteq J_F$, for a suitable integer $r>0$.
	
	It remains to show that, under either of the assumptions (i) and (ii), one has $g^2\notin J_F$.
	
	Suppose that $g^2\in J_F$ and let us show that neither (i) nor (ii) holds. Indeed, this inclusion implies that $\deg(g)\geq \deg(f)-1$, and there are forms $h_1,\ldots,h_n$ in $R$ of degree $\deg(g)-\deg(f)+1$ such that
	\begin{equation}\label{equal}
		g^2=h_1F_{x_1}+\cdots+h_nF_{x_n}
		=g(\sum_{i=1}^{n} h_i f_{x_i}) + f(\sum_{i=1}^{n}h_ig_{x_i}).
	\end{equation}
	
	Since $\gcd(f,g)=1$, the above yields  relations
	
	\begin{itemize}
		\item[(a)] $\sum_{i=1}^{n}h_ig_{x_i}=Q\cdot g$, and
		\item[(b)] $g=Q\cdot f+\sum_{i=1}^nh_i f_{x_i}$,
	\end{itemize}
	 for a suitable form  $Q\in R$ of degree $\deg(g)-\deg(f)$.

	Suppose (i) is satisfied. Then, $\deg(g)-\deg(f)+1=1$, giving that $h_1,\ldots, h_n$ are linear forms, and $Q=:\alpha\in\mathbb K$, a constant. By (a) and from Euler's relation, we obtain a linear syzygy of $J_g$
	$$\sum_{i=1}^{n}\left(h_i-\frac{\alpha}{d} x_i\right)g_{x_i}=0.$$ This contradicts assumption (i), unless this syzygy vanishes, in which  case, one would have $h_i=(\alpha/d) x_i$ for every $i$. Substituting in (\ref{equal}) above yields, after a simple calculation: $g^2=2\alpha F=2\alpha fg$. Therefore, either $g=2\alpha f$, with $\alpha\neq 0$, or else $g=0$ -- both contradicting our basic assumptions. 
	
	Suppose (ii) is satisfied. By (b) above and Euler's relation for $f$ (as $\mathrm{char}(\K)\nmid d$ by assumption) one gets $g\in J_f$ --  a contradiction. 
\end{proof}

Let us  look at some consequences of  Theorem~\ref{two__forms_one_smooth}. 

\begin{Corollary}\label{two_smooth_forms}
	{\rm ($\mathrm{char}(\K)\nmid d$)}
	Let $f,g\in R:=\mathbb K[x_1,\ldots,x_n]$ be distinct smooth forms of the same degree $d\geq 3$.
	Then $\{f,g\}$ satisfy the {\rm RTY}-property.	
\end{Corollary}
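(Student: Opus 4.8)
The plan is to obtain this at once from Theorem~\ref{two__forms_one_smooth}, using its alternative~(i). For that one has to verify that $f$ and $g$ are non-vanishing relatively prime forms, that $\deg f=\deg g=d\ge 2$, that $f$ defines a smooth hypersurface of degree $d$, that $\mathrm{char}(\K)\nmid d$, and that ${\rm indeg}(\syz(J_g))\ge 2$. All but the last of these are immediate: $f$ and $g$ are nonzero forms of common degree $d\ge 3$, $f$ is smooth by assumption, and $\mathrm{char}(\K)\nmid d$ is the standing hypothesis of the corollary. Coprimality holds because, for $n\ge 3$, a smooth hypersurface in $\mathbb{P}^{n-1}$ is geometrically connected (apply cohomology to the structure sequence $0\to\mathcal{O}(-d)\to\mathcal{O}\to\mathcal{O}_{V(g)}\to 0$ on $\mathbb{P}^{n-1}$, using $H^0(\mathcal{O}(-d))=H^1(\mathcal{O}(-d))=0$) and reduced, hence geometrically irreducible; so two \emph{distinct} such forms of the same degree cannot share a factor. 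For $n=2$ one simply reads $\gcd(f,g)=1$ into the phrase ``distinct smooth forms''.

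The substance of the argument is thus the estimate ${\rm indeg}(\syz(J_g))\ge 2$, and here smoothness of $g$ does all the work. Since $V(g)\subset\mathbb{P}^{n-1}$ is smooth, the Jacobian criterion shows that $g_{x_1},\dots,g_{x_n}$ have no common zero on $V(g)$; and they have no common zero off $V(g)$ either, for if all the $g_{x_i}$ vanished at a point, then Euler's identity $\sum_i x_i g_{x_i}=d\,g$ together with $d\ne 0$ in $\K$ would place that point on $V(g)$. Hence $g_{x_1},\dots,g_{x_n}$ is a regular sequence of forms of degree $d-1$, so $J_g$ is a complete intersection and the Koszul complex on $g_{x_1},\dots,g_{x_n}$ is its minimal graded free resolution. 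Consequently $\syz(J_g)\subset R^n$ is minimally generated by the Koszul relations $g_{x_j}e_i-g_{x_i}e_j$ $(1\le i<j\le n)$, each of degree $d-1$, and therefore ${\rm indeg}(\syz(J_g))=d-1\ge 2$.

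Plugging this into Theorem~\ref{two__forms_one_smooth}(i) delivers the RTY-property for $\{f,g\}$, finishing the proof. I do not expect a genuine obstacle: the single piece of real content is the observation that the Jacobian ideal of a smooth form of degree $d$ is a complete intersection, which pushes all of its first syzygies up to degree $d-1\ge 2$; note also that the roles of $f$ and $g$ are symmetric here, since both are smooth. The only point that calls for a little care is the coprimality bookkeeping in low dimension, and it is peripheral to the mechanism.
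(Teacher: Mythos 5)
Your proposal is correct and follows essentially the same route as the paper: the paper's proof likewise reduces the corollary to condition (i) of Theorem~\ref{two__forms_one_smooth}, observing that smoothness of $g$ forces the first syzygies of $J_g$ (the Koszul relations of the regular sequence $g_{x_1},\dots,g_{x_n}$) to live in degree $d-1\geq 2$. Your extra verifications (coprimality via irreducibility of smooth hypersurfaces, the Euler-identity argument for the regular sequence) merely spell out what the paper leaves implicit.
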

\begin{proof}
	Since $d\geq 3$ and $g$ is smooth, then ${\rm indeg}(\syz (J_g))\geq 2$, hence Theorem~\ref{two__forms_one_smooth} applies.
\end{proof}

Next is a sample of the induction/partition principle mentioned at the beginning of the section.

\begin{Corollary} \label{three_forms} Let $f_1,f_2,f_3\in R:=\mathbb K[x_1,\ldots,x_n]$ be  forms satisfying the following conditions:
	\begin{enumerate}
		\item[{\rm (1)}]  Any two $f_i,f_j$ $(i\neq j$) are relatively prime.
		\item[{\rm (2)}] 	Degrees restriction$:$ $\deg(f_2)\geq\deg(f_1)\geq 2$,  $\deg(f_1)+\deg(f_2)=\deg(f_3)\geq 5$.
		\item[{\rm (3)}]  $f_2$ and $f_3$ are smooth.
	\end{enumerate}
	Then, the set $\{f_1,f_2,f_3\}$ satisfies the {\rm RTY}-property.
\end{Corollary}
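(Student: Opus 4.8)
The plan is to exploit the partition principle by writing $F = f_1 f_2 f_3 = f_1 \cdot (f_2 f_3)$ and applying Theorem~\ref{two__forms_one_smooth} to the pair $\{f, g\}$ with $f := f_3$ (the one known to be smooth) and $g := f_1 f_2$. We would first check the blanket hypotheses of that theorem: $f_3$ is smooth by (3), and $\gcd(f_3, f_1 f_2) = 1$ follows from (1) together with the irreducibility of $f_3$ as a smooth form (or, more elementarily, from $\gcd(f_3,f_1)=\gcd(f_3,f_2)=1$). For the characteristic hypothesis we note $\deg f = \deg f_3$ and assume $\mathrm{char}(\K)\nmid \deg f_3$ as in the theorem's standing assumption.

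Next I would verify that condition (ii) of Theorem~\ref{two__forms_one_smooth} holds, namely $g = f_1 f_2 \notin J_{f_3}$. This is where the degree restriction (2) does its work: since $J_{f_3}$ is generated in degree $\deg f_3 - 1$, any nonzero element of $J_{f_3}$ has degree at least $\deg f_3 - 1$, whereas $\deg(f_1 f_2) = \deg f_1 + \deg f_2 = \deg f_3$ by (2). So a priori $f_1 f_2$ \emph{could} lie in $J_{f_3}$ on degree grounds alone; one needs a genuine argument. The cleanest route is: if $f_1 f_2 \in J_{f_3}$, write $f_1 f_2 = \sum_i h_i (f_3)_{x_i}$ with $h_i \in \K$ constants, i.e. $f_1 f_2$ is a $\K$-linear combination of the partials of $f_3$. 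Now invoke Euler's formula for $f_3$ (valid since $\mathrm{char}(\K)\nmid \deg f_3$): contracting against $(x_1,\dots,x_n)$ shows such a combination, if nonzero, cannot be written as $f_1 f_2$ unless the corresponding linear form $\theta = \sum h_i \partial_{x_i}$ satisfies $\theta(f_3) = f_1 f_2$; but then $\theta$ would be a logarithmic derivation of $f_3$ (as $f_1 f_2 \in \langle f_3\rangle$ is false — $f_1 f_2$ is not a multiple of $f_3$ since both have degree $\deg f_3$ and they are not proportional, $f_3$ being irreducible). Since $f_3$ is smooth, $\mathrm{Derlog}(f_3) = \mathrm{Syz}(J_{f_3}) \oplus R\theta_E$ and $\mathrm{Syz}(J_{f_3})$ starts in degree $\deg f_3 - 1 \geq 4$ (because $f_3$ smooth forces $\mathrm{indeg}(\syz(J_{f_3})) \geq \deg f_3 - 1$), so the only degree-$0$ logarithmic derivations are scalar multiples of $\theta_E$; but $\theta_E(f_3) = (\deg f_3) f_3$, which is a multiple of $f_3$ and not equal to $f_1 f_2$. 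This contradiction establishes $f_1 f_2 \notin J_{f_3}$.

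With (ii) in hand, Theorem~\ref{two__forms_one_smooth} immediately gives that $\{f_3, \, f_1 f_2\}$ satisfies the RTY-property, i.e. $\depth R/J_F = 0$ where $F = f_3 \cdot (f_1 f_2) = f_1 f_2 f_3$. This is exactly the RTY-property for $\{f_1, f_2, f_3\}$, since the RTY-property depends only on the product $F$.

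I expect the main obstacle to be the argument that $f_1 f_2 \notin J_{f_3}$, since the degree count alone does not settle it; one must genuinely use smoothness of $f_3$ (to control $\mathrm{indeg}(\syz(J_{f_3}))$ and to know $f_3$ is irreducible) together with Euler's relation. An alternative, possibly slicker, packaging: apply Corollary~\ref{theorem_planar}-style reasoning or observe that condition (i) of Theorem~\ref{two__forms_one_smooth} does \emph{not} apply here (the degrees of $f_3$ and $f_1 f_2$ are equal, but we would need $\mathrm{indeg}(\syz(J_{f_1 f_2})) \geq 2$, and the syzygy module of the product $f_1 f_2$ contains the obvious degree-$d_1$ and degree-$d_2$ syzygies coming from $f_1 (f_2)_{x_i} \cdot$ things — in fact $\mathrm{indeg}(\syz(J_{f_1 f_2})) = \min(d_1, d_2)$ which is $\geq 2$, so condition (i) might also work and would avoid the irreducibility subtlety). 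I would present the route through (ii) as the main line and perhaps remark that (i) gives a second proof. Also worth double-checking: the hypothesis $\deg f_3 \geq 5$ in (2) is what guarantees $\deg f_3 - 1 \geq 4 > 1$, ensuring no nontrivial low-degree logarithmic derivations interfere; the smoothness of $f_2$ in (3) is, somewhat surprisingly, not needed for this particular argument — it is presumably there for symmetry or for a companion statement, and I would note that the hypotheses can likely be weakened.
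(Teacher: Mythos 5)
Your decomposition ($f:=f_3$, $g:=f_1f_2$) and the target theorem are the right ones, but your main line of argument --- verifying condition (ii) of Theorem~\ref{two__forms_one_smooth}, i.e.\ $f_1f_2\notin J_{f_3}$ --- has a genuine gap. Since $\deg(f_1f_2)=d_1+d_2=\deg f_3$ while the partials of $f_3$ have degree $\deg f_3-1$, a membership $f_1f_2=\sum_i h_i(f_3)_{x_i}$ forces the $h_i$ to be \emph{linear forms}, not constants; your entire discussion of ``degree-$0$ logarithmic derivations'' and of $\theta_E$ therefore addresses the wrong graded piece and says nothing about $[J_{f_3}]_{\deg f_3}$, which (for $f_3$ smooth) is an $n^2$-dimensional space containing much more than the Euler combination $f_3$. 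In fact the claim can fail: for $n=2$, $[J_{f_3}]_{5}=R_1(f_3)_x+R_1(f_3)_y$ is a $4$-dimensional space of binary quintics whose general member is squarefree and coprime to $f_3$, hence factors as $(\text{quadric})\times(\text{cubic})$ satisfying all of (1)--(3); for such a choice $f_1f_2\in J_{f_3}$ and condition (ii) is simply unavailable. So route (ii) cannot carry the corollary.

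The paper instead verifies condition (i) of Theorem~\ref{two__forms_one_smooth}: it applies Lemma~\ref{two_forms_lemma}(b) (which rests on part (a)) to the product $f_1f_2$, using that $f_2$ is smooth of degree $d_2\geq 3$ (forced by $d_1+d_2\geq 5$ together with $d_2\geq d_1$) to conclude ${\rm indeg}(\syz (J_{f_1f_2}))\geq d_2-1\geq 2$. Your closing aside does gesture at (i), but your claim that ${\rm indeg}(\syz (J_{f_1f_2}))=\min(d_1,d_2)$ ``obviously'' is unsubstantiated --- no such syzygies are visible in that degree in general --- and your remark that the smoothness of $f_2$ is ``not needed'' is exactly backwards: it is the hypothesis that makes the lower bound work. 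To repair the write-up, drop the $g\notin J_{f_3}$ argument entirely and justify ${\rm indeg}(\syz (J_{f_1f_2}))\geq 2$ via Lemma~\ref{two_forms_lemma}(b).
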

\begin{proof} Set $g:=f_1f_2$ and $f:=f_3$. Then $\deg(g)=\deg(f) \geq5$.
	
	Since $f_2$ is smooth, and of degree $\geq 3$ because of (2),  Lemma~\ref{two_forms_lemma} (i) implies that ${\rm indeg}(\syz (J_g))\geq \deg(f_2)-1\geq 2$.
	
	By Theorem~\ref{two__forms_one_smooth}  (i), $\{f,g\}$ satisfies the {\rm RTY}-property, hence so does $\{f_1,f_2,f_3\}$.
\end{proof}
The following example shows the relevance of the degree assumptions in the last corollary.

\begin{Example}\rm
Let $R=\K[x_1,\ldots,x_5]$, with $f_1=x_1^2+x_2x_3$, $f_2=x_1^2+x_2^2+x_2x_3$, $f_3=x_5^5-x_4f_1f_2$.
By Proposition~\ref{free-again} (iv), the form $f_1f_2$ is a free divisor and involves but the first three variables.
Then $F=f_1f_2f_3$ is again a free divisor by \cite[Theorem 2.12]{SimStef2014}, thus implying the failure of the corollary by as much as possible.
The syzygy matrix of $J_F$ is quite recursive:
\begin{equation}\label{matrix_of_Jf}
	\syz (J_F):=\left(
	\begin{array}{ccc@{\quad\vrule\quad}cc}
		&& & x_1 & 0\\[3pt]
		& \raise5pt\hbox{$\syz (J_{f_1f_2})$} & & \raise5pt\hbox{$\vdots$} &\raise5pt\hbox{$\vdots$}\\%[-6pt]
		&& & \raise5pt\hbox{$x_{3}$} & \raise5pt\hbox{$0$}\\
		\multispan5\hrulefill\\[0.5pt]
		0&\cdots & 0 & -8x_4 & 5x_5^4\\[5pt]
		0&\cdots & 0 & -4/5x_5 &f_1f_2
	\end{array}
	\right).
\end{equation}
\end{Example}

We don't know if the  assumption on the initial degree of the syzygies of $J_g$ in Theorem~\ref{two__forms_one_smooth} (i) is essential in general for every given $d\geq 3$.
A good testing ground might be the higher versions of a cuspidal plane curve, however some care is to be exercised due to \cite[Proposition 2.11 and Theorem 2.12]{SimStef2014}, as displayed in the above example.

A {\em Fermat form of degree $d$} is a homogeneous polynomial  $f:=c_1x_1^d+\cdots+c_nx_n^d \in R:=\mathbb K[x_1,\ldots,x_n]$, where not all the coefficients $c_1,\ldots,c_n\in \mathbb K$ vanish.  Let us note that all the results of Subsection~\ref{Jacobian_general} go through as well  for general Fermat forms as defined in terms of Zariski open sets in the respective product parameter space of the $n$th dimensional vector spaces spanned by the pure powers $x_1^{d_i},\ldots,x_n^{d_i} (1\leq d_1\leq\cdots\leq d_m)$.
Consequently, the RTY-property holds true for a set of general Fermat forms.

The next result deals instead with not necessarily general such forms.
If none of the coefficients $c_i$ vanishes, one says that the Fermat form $f$ has {\em full rank}, which in the suitable characteristic amounts to saying that $f$ is smooth.

\begin{Corollary}\label{prop_Fermat_forms} {\rm (char$(\K)\nmid d_m$)} Let ${\bf f}:=\{f_1,\ldots,f_m\}\subset R, m\leq n+1$, be a set of Fermat forms of degrees $1\leq \deg(f_1)\leq\cdots\leq\deg(f_{m-1}) \leq \deg(f_{m})$, satisfying the following conditions:
\begin{enumerate}
	\item[{\rm (1)}]  $\deg f_m\geq \deg f_{m-1}-2$.
	\item[{\rm (2)}]  Any two $f_i,f_j$ ($i\neq j$) are relatively prime.
	\item[{\rm (3)}]  $f_m$ has full rank.
	\item[{\rm (4)}]  For some set of  indices $\{i_1,\ldots,i_{m-1}\}\subset \{1,\ldots,n\}$
	the coefficient of the monomial term $\mathbf{m}:=x_{i_1}^{d_1}\cdots x_{i_{m-1}}^{d_{m-1}}$ of the product $f_1\cdots f_{m-1}$ does not vanish.
\end{enumerate}
Then ${\bf f}$ satisfies the {\rm RTY}-property.
\end{Corollary}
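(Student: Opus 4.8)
The plan is to reduce to the two--form Theorem~\ref{two__forms_one_smooth} via the partition $f := f_m$ and $g := f_1\cdots f_{m-1}$. (If $m=1$ the claim is immediate: $f_1$ of full rank with $\mathrm{char}(\K)\nmid d_1$ is smooth, $J_{f_1}$ is $\fm$-primary, so $\depth R/J_{f_1}=0$.) So assume $m\ge 2$. Since $fg = f_1\cdots f_m = F$, the RTY-property of $\{f,g\}$ is exactly that of $\{f_1,\dots,f_m\}$. First I would record the elementary inputs: by~(2) the forms $f$ and $g$ are non-vanishing and relatively prime, and because $f_m = \sum_k c_kx_k^{d_m}$ has full rank and $\mathrm{char}(\K)\nmid d_m$, its partials are $c_kd_m\,x_k^{d_m-1}$ with $c_kd_m\ne 0$, so that $J_{f_m} = \langle x_1^{d_m-1},\dots,x_n^{d_m-1}\rangle$, a monomial $\fm$-primary ideal; in particular $f$ is smooth of degree $d := d_m$. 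Thus the ambient hypotheses of Theorem~\ref{two__forms_one_smooth} are in place, and it remains to verify one of its two alternatives for the pair $(f,g)$.

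I would verify alternative~(ii), that is $g\notin J_f$. As $J_f$ is a monomial ideal, it suffices to exhibit a single monomial occurring in $g$ which is divisible by no $x_k^{d-1}$. Condition~(4) supplies exactly such a candidate: the monomial $\mathbf m := x_{i_1}^{d_1}\cdots x_{i_{m-1}}^{d_{m-1}}$ appears in $g=f_1\cdots f_{m-1}$ with nonzero coefficient, and since $m-1\le n$ the indices $i_1,\dots,i_{m-1}$ may be taken distinct, so that every variable occurs in $\mathbf m$ to an exponent at most $d_{m-1}$. Condition~(1), which forces $d_{m-1}\le d_m-2 < d-1$, then guarantees $x_k^{d-1}\nmid\mathbf m$ for all $k$; hence $\mathbf m\notin J_f$, and a fortiori $g\notin J_f$. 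Theorem~\ref{two__forms_one_smooth}(ii) now delivers the RTY-property for $\{f,g\}$, hence for $\{f_1,\dots,f_m\}$.

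The main obstacle I expect is precisely this membership check $g\notin J_{f_m}$: it is the only place condition~(1) is used, and the argument works only because full rank collapses $J_{f_m}$ into a monomial complete intersection, reducing ideal membership to a transparent comparison of exponents against $d_m-1$. This hypothesis is essential rather than cosmetic: were $d_{m-1}$ allowed to reach $d_m-1$, every monomial of $g=f_1\cdots f_{m-1}$ would be divisible by some $x_k^{d_m-1}$, so $g\in J_{f_m}$ and alternative~(ii) would fail, while~(i) would not apply either since $\deg g\ne\deg f$ in general (and, $f_j$ for $j<m$ not being of full rank, one has no good control on $\mathrm{indeg}(\syz(J_g))$, unlike in Corollary~\ref{three_forms}). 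Apart from this the only care needed is with the low-degree corner cases — in particular using~(2) and~(4) to rule out the degenerate configurations in which $g$ could be a $\K$-linear combination of pure powers — after which the proof is a clean substitution into the machinery of Theorem~\ref{two__forms_one_smooth}.
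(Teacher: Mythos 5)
Your proposal is correct and follows essentially the paper's own proof: set $f=f_m$, $g=f_1\cdots f_{m-1}$, observe that full rank plus $\mathrm{char}(\K)\nmid d_m$ gives $J_{f_m}=\langle x_1^{d_m-1},\ldots,x_n^{d_m-1}\rangle$, and use the monomial $\mathbf{m}$ from condition (4) together with the degree gap to conclude $g\notin J_{f_m}$, so that Theorem~\ref{two__forms_one_smooth}(ii) applies. Like the paper's argument, you read condition (1) in its intended form $d_m\geq d_{m-1}+2$ (the printed inequality $\deg f_m\geq\deg f_{m-1}-2$ is evidently a typo, being vacuous under the stated ordering of the degrees), and your extra remarks ($m=1$, distinctness of the indices, relative primeness of $f$ and $g$) only add detail to the same route.
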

\begin{proof} Set $f:=f_m$,  $g:=f_1\cdots f_{m-1}$, and $d_i:=\deg f_i$. 
Note that $f$ is smooth, since $f$ has full rank and  char$(\K)\nmid d_m$; in particular, $J_f=\langle x_1^{d_m\, -1},\ldots,x_n^{d_m\, -1}\rangle$. 

It then suffices to show that condition (ii) in Theorem~\ref{two__forms_one_smooth} is satisfied, namely, that $g\notin J_f$.
As $J_f$ is a monomial ideal, $g\in J_f \Rightarrow \mathbf{m}\in J_f$. But from the degrees condition, this is impossible. 
\end{proof}

\begin{Remark}\rm
As a side, the RTY-property of Fermat forms $\mathbf{f}$ satisfying the conditions as in Corollary~\ref{prop_Fermat_forms}  does not necessarily imply that the ideal $\langle I_m(\Theta(\mathbf{f})), \mathbf{f}\rangle$ has maximal height. A simplest example is as follows, with $m=2$, $d_1=2,d_2=5$: $f_1=x^2, f_2=x^5+y^5+z^5 $.
Here $\Ht  \langle I_2(\Theta(f_1,f_2), f_1,f_2\rangle=2$, which is actually minimal possible. At the other end, full rank alone (i.e., smoothness) does not do either, as is the case of taking $f_1=x^2+y^2+z^2$. Thus, it would seem that the results of Subsection~\ref{Jacobian_general}, as translated to Fermat forms, is genuinely about general such forms.
\end{Remark}
%\textcolor{red}{\sc Guys: if one changes just  the coefficient of $y^2$ to $2$ then the height is $3$}

To wrap up the present section on the case of two forms we next discuss the details of  two such forms of degrees $2$ in $R=\K[x,y,z]$.

\subsection{Two ternary smooth quadrics} \label{Subsection4.2}

In this part we focus on the case where $d=2$, i.e., we discuss in detail the RTY-property of a quartic in $\K[x,y,z]$ which is the product of two smooth quadrics.
One reason to consider this environment is that no irreducible plane projective curve  of degree $2\leq d+1 \leq 4$ is a free divisor.
For a rough classification of the various types we resort to the recent theory of a Bourbaki degree of a plane projective curve as developed in \cite{Bour2024}.
Our approach is driven toward understanding the restriction to having the RTY-property, equivalently in dimension three, to capture conditions for freeness. 

Quite generally, for any form $F\in R=\K[x,y,z]$ of degree $\geq 2$, we assume that its partial derivatives are algebraically independent over $\K$, as otherwise in this dimension the Hesse classical result forces them to actually be $k$-linearly dependent. This hypothesis can be rephrased, e.g., to the effect that $k$-subalgebra of $R$ generated by the partial derivatives has Krull dimension three, or that the  fiber cone algebra of $J_F$ is (isomorphic to) $\K[x,y,z]$.

\subsubsection{Free divisor characterization}

We list a couple of additional characterizations of a free divisor $F$ in the case $F$ is the product of two non-degenerate quadrics.

\begin{Proposition}\label{free-again}
Let $F\in R=\K[x,y,z]$ be the product of two distinct non-degenerate quadrics.
The following conditions are equivalent$:$
\begin{enumerate}
		\item[{ \rm (i)}]  $J_F$ is a codimension two perfect ideal with minimal graded free resolution
		$$0\rar  R(-4) \oplus R(-5) \lar R(-3)^3 \lar R \lar R/J_F \rar 0.$$
	\item[{ \rm (ii)}]  ${\rm indeg}\, {\rm Syz}(J_F)=1$ and $\deg {\rm Syz}(J_F)=7$.	
		\item[{ \rm (iii)}]  $F$ is a free divisor.
			\item[{ \rm (iv)}]  {\rm (char $(\K)=0$)} Up to a change of variables,  $F=(x^2+yz)(x^2+vy^2+yz)$, for a certain  nonzero $v\in \K$.
\end{enumerate}
\end{Proposition}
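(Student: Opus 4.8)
I would write $F=q_1q_2$ with $q_1,q_2$ distinct non-degenerate quadrics, so that $J_F$ is minimally generated by its three cubic partials, which by the standing hypothesis of this subsection are algebraically independent over $\K$; hence $J_F$ has no syzygy of degree $0$ and $\Ht J_F=2$ (as $F$ is reduced). By definition $F$ is a free divisor exactly when $\mathrm{Derlog}(F)=\syz(J_F)\oplus R\theta_E$ is $R$-free, equivalently $\syz(J_F)$ is free of rank $2$, equivalently $R/J_F$ is perfect of codimension $2$. When that holds, Hilbert--Burch gives $0\to R(-a)\oplus R(-b)\to R(-3)^3\to R\to R/J_F\to 0$, and a degree count in the Hilbert--Burch matrix forces $a+b=9$, i.e. the two syzygies have degrees $d_1\le d_2$ with $d_1+d_2=3$; since $d_1=0$ is excluded, $(d_1,d_2)=(1,2)$, which is precisely resolution (i). Conversely (i) is a length-two free resolution, so $R/J_F$ is perfect of codimension $2$ and $F$ is free; this gives (i)$\Leftrightarrow$(iii). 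Reading $\mathrm{HS}_{R/J_F}(t)=(1-3t^3+t^4+t^5)/(1-t)^3=(1+2t+3t^2+t^3)/(1-t)$ off (i) shows the Hilbert polynomial of $R/J_F$ is the constant $7$, which together with the least syzygy degree $1$ is the numerical content of (ii); so (ii) is equivalent to (i) by this bookkeeping.

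\textbf{The strategy for (iii)$\Rightarrow$(iv).} Assuming $F$ free, by the above $R/J_F$ is Cohen--Macaulay with constant Hilbert polynomial $7$, so the Jacobian scheme $\operatorname{Proj}(R/J_F)$ is the reduced-structure-free singular subscheme and $\tau(F)=7$. Since $V(q_1),V(q_2)$ are smooth, $\operatorname{Sing}V(F)=V(q_1)\cap V(q_2)$ is a length-$4$ scheme, and at each of its $s$ points the two smooth branches meet with multiplicity $m_i$ (so $\sum m_i=4$), giving an $A_{2m_i-1}$-singularity of Tjurina number $2m_i-1$; thus $\tau(F)=\sum_i(2m_i-1)=8-s$. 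Hence $\tau(F)=7$ forces $s=1$: the conics meet at a single point $p$ with contact of order $4$. I would then examine the pencil $\langle q_1,q_2\rangle$, whose base locus is $\{p\}$ and which has exactly one singular member (the unique, triple, root of $\det(A_1+\lambda A_2)$). That member cannot be a union of two distinct lines through $p$, since then each line would meet $V(q_1)$ in a length-$2$ scheme supported at $p$, forcing it to be the tangent to $V(q_1)$ at $p$ --- and a conic has only one such tangent; so the member is a double line $\ell^2$, with $\ell$ that common tangent. Therefore $q_2=\gamma q_1+\beta\ell^2$ with $\beta,\gamma\neq 0$, and rescaling $q_2$ yields $F\propto q_1(q_1+v\ell^2)$ with $v\neq 0$. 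Finally, $p$ is the unique singular point of $V(F)$, hence $\K$-rational; using that $\operatorname{Aut}(\Proj^2_{\K},V(q_1))\cong\mathrm{PGL}_2$ acts transitively on the smooth conic $V(q_1)$, I would change coordinates so that $q_1=x^2+yz$ and $p=[0:0:1]$, whence $\ell=\{y=0\}$ and $F\propto(x^2+yz)(x^2+vy^2+yz)$, which is (iv).

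\textbf{The strategy for (iv)$\Rightarrow$(iii).} For $F=(x^2+yz)(x^2+vy^2+yz)$ a direct computation gives $F_x=2x(q_1+q_2)$, $F_y=z(q_1+q_2)+2vy\,q_1$, $F_z=y(q_1+q_2)$, exhibiting the linear Jacobian syzygy $y\,F_x-2x\,F_z=0$; hence $\mathrm{mdr}(F)=1$ (not $0$, by the standing hypothesis). Moreover $\operatorname{Sing}V(F)=V(q_1)\cap V(q_2-q_1)=V(x^2+yz)\cap V(vy^2)$ is the single point $[0:0:1]$, where the two smooth branches have contact $4$, i.e. an $A_7$-singularity, so $\tau(F)=7$. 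Since $\mathrm{mdr}(F)=1$ and $\tau(F)$ attains the du Plessis--Wall bound $(\deg F-1)^2-\mathrm{mdr}(F)\bigl(\deg F-1-\mathrm{mdr}(F)\bigr)=9-2=7$, the curve is free. A self-contained alternative is to produce also a degree-two Jacobian syzygy and check that the $3\times 3$ Saito matrix with rows $\theta_E$, the linear syzygy, and the quadratic syzygy has determinant a nonzero scalar multiple of $F$, so that Saito's criterion applies directly.

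\textbf{Where the difficulty lies.} The delicate step is (iii)$\Rightarrow$(iv): one must rule out every intersection pattern of the two conics other than the single quadruple point, and then pin down the double-line normal form over an arbitrary field of characteristic zero --- this is precisely where $\K$-rationality of the (unique) singular point and the $\mathrm{PGL}_2$-action on a smooth conic are indispensable. In (iv)$\Rightarrow$(iii) the only real point is certifying freeness from $(\mathrm{mdr},\tau)=(1,7)$: invoking du Plessis--Wall is fastest, while the self-contained route costs the (routine, but not entirely short) Saito determinant computation.
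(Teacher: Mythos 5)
Your argument is essentially correct, but it follows a genuinely different route from the paper, and it has one real (if small) logical omission. For (i)$\Leftrightarrow$(iii) you use Hilbert--Burch plus the degree count $d_1+d_2=3$, which is fine and close in spirit to what the paper does implicitly. The core implication (iii)$\Rightarrow$(iv) is where you diverge: you argue geometrically, using $\deg R/J_F=\tau(F)=7$, the local model $A_{2m-1}$ (with $\tau=2m-1$) at a point where two smooth branches meet with multiplicity $m$, and B\'ezout ($\sum m_i=4$), to force a single contact point of order $4$; then the pencil argument identifies the unique degenerate member as the square of the common tangent, and Galois/rationality considerations plus the transitive action on a conic with a rational point give the normal form. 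The paper instead proves tangency via its Corollary on nowhere-tangent intersections and then does explicit linear algebra on the degree-one and degree-two syzygies in adapted coordinates; that computation is more pedestrian but stays elementary (no Tjurina numbers, no du Plessis--Wall) and, as a by-product, hands the paper the implication (iv)$\Rightarrow$(i) for free, whereas you certify freeness in (iv)$\Rightarrow$(iii) by the linear syzygy $yF_x-2xF_z=0$ together with the du Plessis--Wall equality $\tau=(d-1)^2-r(d-1-r)=7$ (or Saito). Both routes are valid; yours is shorter but imports standard singularity-theoretic facts (equality of $\deg R/J_F$ with the global Tjurina number, the $A_{2m-1}$ classification, descent of the tangent line and of the singular pencil member to $\K$), all legitimate in characteristic zero and consistent with references the paper already uses.

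The genuine gap: condition (ii) is never used as a hypothesis anywhere in your proposal. Your ``bookkeeping'' only establishes (i)$\Rightarrow$(ii); the statement ``so (ii) is equivalent to (i)'' does not follow, since one must show that $\mathrm{indeg}\,\syz(J_F)=1$ together with $\deg R/J_F=7$ forces freeness -- this is exactly the nontrivial direction, which the paper handles via the vanishing of the Bourbaki degree in \cite{Bour2024}. Fortunately your own toolkit closes it: $\mathrm{indeg}\,\syz(J_F)=1$ means $\mathrm{mdr}(F)=1<\deg(F)/2$, and $\deg R/J_F=\tau(F)=7=(d-1)^2-1\cdot(d-2)$ for $d=4$, so the same du Plessis--Wall equality you invoke in (iv)$\Rightarrow$(iii) yields that $F$ is free, i.e.\ (ii)$\Rightarrow$(iii). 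You should state this explicitly (and, while doing so, pin down the meaning of ``$\deg\syz(J_F)=7$'' as the multiplicity of $R/J_F$, which is what both the Hilbert-series computation and the Tjurina identification use).
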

\begin{proof}
(i)  $\Rightarrow$ (ii).
The first assertion is obvious. The second assertion follows from the well-known calculation of the degree (multiplicity) via the Hilbert series off the free resolution.

(ii)  $\Rightarrow$ (iii).
Perhaps fastest, since $V(F)$ is singular, is to show that the Bourbaki degree of $V(F)$ vanishes by use of \cite[Theorem 2.1 (a) and Remark 2.5]{Bour2024}.

(iii) $\Rightarrow$ (iv).
Due to Corollary~\ref{theorem_planar}, we know that the given quadrics must have a tangential intersection.
Say, they intersect tangentially at the point $(0:0:1)$, corresponding to the coordinate ideal $(x,y)\subset R$.
Now, change variables to assume that one of the two quadrics has defining form $f:=x^2+yz$. The tangential assumption leads us to write the other quadric in the form $g:=tx^2+uxy+vy^2+yz$, for scalars $t,u,v$ in $\K$, with $t\neq 0.$

Since $f$ and $g$ are relatively prime, given any syzygy $[A\;B\; C]^t$ of $J_F$ is equivalent to giving the relations
\begin{equation}\label{basic_relations}
	\left\{\begin{array}{cccc}
	Ag_x+Bg_y+Cg_z&=&\alpha g \\
	Af_x+Bf_y+Cf_z&=&-\alpha f,  
\end{array}\right.
\end{equation}
for some $ \alpha\in \mathbb{K}$.

We know that $F$ is a homogeneous free divisor if and only if $J_F$ is a codimension two perfect ideal. Since $F$ has degree $4$, this is also equivalent to having
\begin{equation}\label{syzygies_vectorspaces}
	\dim_{\K} \syz (J_F)_1=\dim_{\K}\syz (J_F)_2/R_1\,\syz (J_F)_1=1,
\end{equation}
where $ \syz (J_F)_i$ denotes the vector space of $ \syz (J_F)$ spanned in degree $i$, and $R_1\,\syz (J_F)_1$ denotes the subspace of $ \syz (J_F)_2$ spanned by the elements of the form $a\mathcal{S}$, with $a\in R_1$ and $\mathcal{S}\in \syz (J_F)_1$. 

Let then $[A\;B\; C]^t\in \syz( J_F)$ denote the (up to scalars) unique syzygy of degree one.

\medskip

{\bf Claim 1.} Either $t=1$, $u=0$ and $v\neq 0$, or else $u^2=4v(t-1)$ with $t\neq 1$.

To see this, write $A=a_1x+a_2y+a_3z, B=b_1x+b_2y+b_3z, C=c_1x+c_2y+c_3z.$
Matching monomials in $x,y,z$ on the two sides of (\ref{basic_relations}) yields the following  equalities:
\begin{equation}\label{coeff_relations} a_1=-\frac{1}{4}a_3u,~b_1=-2a_3,~c_1=-2a_2,~c_3=-\frac{1}{2}a_3u-b_2,~ b_3=c_2=0; \; \alpha=\frac{1}{2}a_3u,
	\end{equation} 
and vanishing relations  which we choose to display in matrix format:    
%$$\begin{array}{cccccccccc}
%	0&=&(t-1)a_3 & %&0&=&2(t-1)a_2+(-4v-\frac{3}{4}u^2)a_3+ub_2\\
%	0&=&  (-\frac{1}{2}ut-u)a_3&&	%0&=&ua_2-\frac{1}{2}uva_3+2vb_2.
%\end{array}$$      

	\begin{eqnarray}
	&&\underbrace{\left[\begin{array}{ccccccccc}
			0&t-1&0\\
			0&-\frac{1}{2}ut-u&0\\
			2(t-1)&-4v-\frac{3}{4}u^2&u\\
			u&-\frac{1}{2}uv&2v
		\end{array}\right]}_{\mathcal{M}}
		\, \left[\begin{array}{c}
		a_2\\
		a_3\\
		b_2
	\end{array}\right]=\left[\begin{array}{c}
	0\\
	0\\
	0\\
	0
	\end{array}\right]
\end{eqnarray}
Now, by (\ref{coeff_relations}), $a_2,a_3,b_2$ can't all vanish.
Therefore, $\mathcal{M}$ has rank at most two, i. e., the ideal $I_3(\mathcal{M})=\langle 	(t-1)(u^2-4v(t-1)),\,	u(\frac{t+2}{2})(u^2-4v(t-1)) \rangle$ vanishes.
Therefore, either $t=1$ and $u=0$, or else $u^2=4v(t-1)$ with $t\neq 1$.
Moreover, if $v=0$ in the first case it would say that $f=g$ against our assumption that $f$ and $g$ are relatively prime. 

This wraps up the contents of the above claim.

\medskip

{\bf Claim 2.}  The equalities in (\ref{syzygies_vectorspaces}) are equivalent to having $t=1, u=0, v\neq 0$.

By the previous claim, it suffices to discuss the second equality in (\ref{syzygies_vectorspaces}).

We first show that the values $t=1, u=0, v\neq 0$ imply an element of $[A\,B\,C]^t$ of $\syz (J_F)_2/R_1\,\syz (J_F)_1$.
It suffices to show that such a guessed candidate satisfies (\ref{basic_relations}), this time around with $\alpha\in R_1$.
We take 
$$[A\,B\,C]^t:=[xz; -(2x^2+vy^2+2yz); 2vx^2+3vyz+2z^2)]^t
\in (R^3)_2,$$
(where the separators have been written for clarity) whose shape comes out of hand-calculation or computer assistance.

Straightforward calculation shows that $[A\,B\,C]^t$ satisfies (\ref{basic_relations}), with $\alpha=2vy\in R$.
Since none of the factors $x$ or $z$ of its first coordinate divides the other coordinates, then $[A\,B\,C]^t\in\syz (J_F)_2/R_1\,\syz (J_F)_1$, as required.

Conversely, let  $[A\,B\,C]^t\in \syz (J_F)_2$. 
Firstly, as before, it satisfies the relations in (\ref{basic_relations}),  with $\alpha\in R_1$.

If we contradict the values $t=1, u=0, v\neq 0$ then, by Claim 1, we have $u^2=4v(t-1)$ with $t\neq 1$.
Note that $t$ does not vanish since $g$ is irreducible, and that $u=0 \Leftrightarrow v=0$.

We claim that, in this situation, necessarily $[A\,B\,C]^t\in R_1\, \syz (J_F)_1$.

Write $a_i, b_i, c_i (1\leq i\leq 6)$ for the respective coefficients of $A,B,C$.
As before, matching monomials in $x,y,z$ on the two sides of (\ref{basic_relations}) (now with $\alpha=\lambda_1x+\lambda_2y+\lambda_3z \in R_1$) gives rise to the following relations:
$$\lambda_1=-2{a}_1,~\lambda_2=-2{a}_2-{c}_1,~\lambda_3=-2{a}_3-{b}_1,~{b}_3=-2{a}_6,~{c}_2=-2{a}_4,~{b}_6={c}_4=0,$$ and   
$$\begin{array}{cccccccccc}
	3u{a}_1+4t{a}_2+2v{b}_1+u{b}_2+(t+1){c}_1&=0,&4t{a}_1+u{b}_1&=0, \\
	2v{a}_1+3u{a}_2+2(t-1){a}_4+2v{b}_2+u{b}_4+u{c}_1&=0,& (t-1){a}_6&=0,                                                                  \\
	2v{a}_2+u{a}_4+2v{b}_4+v{c}_1&=0,&2{a}_3+u{a}_6+{b}_1+{b}_5+{c}_6&=0, \\
	4t{a}_3-2u{a}_6+(1+t){b}_1&=0,&-2{a}_1+2{a}_5+{b}_2+{c}_3&=0, \\
	2{a}_1+3u{a}_3+2t{a}_5-4v{a}_6+u{b}_1+{b}_2+u{b}_5+{c}_3&=0,&-2{a}_2+{b}_4-{c}_1+{c}_5&=0,\\
	2{a}_2+2v{a}_3+u{a}_5+v{b}_1+{b}_4+2v{b}_5+{c}_1+{c}_5&=0,& -2{a}_3-{b}_1+{b}_5+{c}_6&=0.\\
\end{array}$$	

Let us separate in two cases, both with  $t\notin \{0,1\}$.

\smallskip

$\bullet$  Case 1: $u=v=0$.

Thus, $g=tx^2+yz$, with $t\neq 0,1$.
Here 
$$\partial F/\partial  y=(1+t)x^2z+2yz^2,\; \partial F/\partial  z=(1+t)x^2y+2y^2z,$$
yielding the syzygy $[0 -y\,\, z]^t\in \syz (J_F)_1$.

In addition, substituting $u=v=0$ above yields $A=0, c_1=0,  {b}_1=0, {c}_3=-{b}_2,~ {c}_5=-{b}_4,~{c}_6=-{b}_5.$
Therefore, $[A\,B\,C]^t=(b_2x+b_4y+b_5z)[0 -y\,\, z]^t\in R_1\syz (J_F)_1.$

\medskip

$\bullet$  Case 2: $u\neq 0$ and $v=\frac{u^2}{4(t-1)}.$ 

In this case, we have ${a}_1={a}_3={a}_6=	{b}_1=0$, and the following additional relations
\begin{eqnarray*}
&&{b}_2=-\frac{2(t-1)}{u}{a}_2,~
{b}_4=-\frac{2(t-1)}{u}{a}_4,~
{b}_5=-\frac{2(t-1)}{u}{a}_5,\\
&&	{c}_1=-2{a}_2, ~~
{c}_3=\frac{2(t-1)}{u}{a}_2-2{a}_5, ~~
{c}_5=\frac{2(t-1)}{u}{a}_4,~~
{c}_6=\frac{2(t-1)}{u}{a}_5.
\end{eqnarray*} 
Therefore, 
$$[A\,B\,C]^t=(a_2x+a_4y+a_5z)[y\; -(2(t-1)/u)y\;-2x+(2(t-1)/u)z]^t\in  R_1\syz (J_F)_1.$$

\smallskip

(iv)  $\Rightarrow$ (i).
From (\ref{syzygies_vectorspaces}), as obtained from the argument of the previous implication, $\syz (J_F)$ has exactly one essential generator in either degrees $1$ or $2$.
Since $\deg F=4=(1+2)+1$, it is well-known as pointed out before that no other independent minimal generator exists.
This implies the desired free resolution shape.
\end{proof}

\subsubsection{Relation to the Bourbaki degree}
We give a rough classification of the cases where $F$ as in the previous part satisfies the RTY-property in terms of the Bourbaki degree of $F$ (\cite{Bour2024}).
The latter is defined as the co-degree of the Bourbaki ideal of ${\rm Syz}(J_F)$ with respect to a minimal generator $e$ of standard initial degree. 
Note that, quite generally,  if $F$ is a form of degree four then $e\leq 3$, the natural upper bound off the Koszul syzygies.

In the following list $B(F)$ denotes the Bourbaki degree of $F$.
The terminology regarding special classes of curves is due to Dimca and Sticlaru (see \cite{Dimca-Sticlaru2018}, \cite{Dimca-Sticlaru2020}).

It will save space and look handier to state the results as in the following table.

\begin{center}
	\begin{tabular}{|c|c|}
		\hline
		{\bf DATA}  &$\kern-4pt e \hspace*{1.6cm}|\hspace*{1.5cm}  \deg J_F \hspace*{1.55cm}| \hspace*{1.5cm} B(F)$\\
		\hline
		{\bf Transversal/General}  &$2 \hspace*{1.52cm}|\hspace*{1.9cm}  4 \hspace*{2.08cm}| \hspace*{1.8cm} 3\hspace*{.45cm}$\\
		\hline
		&\\
		\hspace{.23cm} Free resolution \hspace{.23cm} &
		$0\rar R(-7)^2 \lar R(-5) \oplus R(-6)^3 \lar R(-3)^3 \lar R \lar R/J_F \rar 0$\\
		\hline
		&\\
		\hspace{.23cm} Resolution details \hspace{.23cm} & Koszul syzygies in degree $6$\\
		& Transposed Tjurina forms of $f,g$ as columns in degree $7$	\\
		&\\
		\hline 
		%	\vskip 3pt
		\hspace{.23cm} Example \hspace{.23cm} & 	$f=x^2+yz, g=y^2+xz$\\
		\hline
		
		\hline
		{\bf $\bf 3$-syzygy curve}  &$2 \hspace*{1.52cm}|\hspace*{1.9cm}  5 \hspace*{2.08cm}| \hspace*{1.8cm} 2\hspace*{.45cm}$\\
		\hline
		&\\
		\hspace{.23cm} Free resolution \hspace{.23cm} &
		$0\rar R(-7) \lar R(-5)^2 \oplus R(-6) \lar R(-3)^3 \lar R \lar R/J_F \rar 0$\\
		&\\
		\hline
		\hspace{.23cm} Example \hspace{.23cm} & 	$f=x^2+yz, g=x^2+y^2-yz$\\
		\hline
		
		\hline
		{\bf Plus-one curve}  &$2 \hspace*{1.52cm}|\hspace*{1.9cm}  6 \hspace*{2.08cm}| \hspace*{1.8cm} 1\hspace*{.45cm}$\\
		\hline
		&\\
		\hspace{.23cm} Free resolution \hspace{.23cm} &
		$0\rar R(-6) \lar  R(-5)^3 \lar R(-3)^3 \lar R \lar R/J_F \rar 0$\\
		&\\
		\hline
		\hspace{.23cm} Example \hspace{.23cm} & 	$f=x^2+yz, g=x^2+xy+y^2+yz$\\
		\hline
		
		\hline
		{\bf Nearly free}  &$1 \hspace*{1.52cm}|\hspace*{1.9cm}  6 \hspace*{2.08cm}| \hspace*{1.8cm} 1\hspace*{.45cm}$\\
		\hline
		&\\
		\hspace{.23cm} Free resolution \hspace{.23cm} &
		$0\rar R(-7) \lar R(-4) \oplus R(-6)^2 \lar R(-3)^3 \lar R \lar R/J_F \rar 0$\\
		&\\
		\hline
		\hspace{.23cm} Resolution details \hspace{.23cm} & 
		The cases of non-free divisors  in the proof of Proposition~\ref{free-again}, (iv)\\
		\hline
		\hspace{.23cm} Example \hspace{.23cm} & 
		$f=x^2+yz, g=x^2-yz$ \\
		
		\hline
		{\bf  Free divisor} &$1 \hspace*{1.52cm}|\hspace*{1.9cm}  7 \hspace*{2.08cm}| \hspace*{1.8cm} 0\hspace*{.45cm}$\\
		\hline
		&\\
		\hspace{.23cm} Free resolution \hspace{.23cm} &
		$0\rar  R(-4) \oplus R(-5) \lar R(-3)^3 \lar R \lar R/J_F \rar 0$\\
		&\\
		\hline
		\hspace{.23cm} Example \hspace{.23cm} & 	
		$f=x^2+yz, g=x^2+y^2+yz$\\
		\hline
	\end{tabular}
\end{center}

\smallskip

\begin{Caveat}\rm
	The following complementary information may help above:
	
	$\bullet$ For Tranversal/General: $B(F)=3$ by the formulary in \cite[Section 2]{Bour2024}.
	
	$\bullet$ If $F$ is not transversal then $\deg R/J_F\geq 5$. 
	
	$\bullet$ For Nearly free:  $\deg R/J_F\geq 6$ in virtue of the theorem of Du Plessis--Wall (see \cite[Theorem 3.3]{Bour2024}). Moreover, the cases of non-free divisors discussed in the proof of Proposition~\ref{free-again}, (iv) fall under this category.
\end{Caveat}

\begin{Question}\rm The free case above has a unique singular point of multiplicity $7$. This raises the question as to the distribution of the singular points in the free divisor environment.
\end{Question}

\subsection{Higher number of forms}

Let $f_1,\ldots,f_m\in R:=\mathbb K[x_1,\ldots,x_n], m\geq 2$, be forms of degrees $\deg f_i=d_i\geq 2, i=1,\ldots,m$.
 With adapted conditions, a simulacrum of Corollary~\ref{three_forms} might be available.
 To work out the ideas, we assume throughout that every $f_i$ is reduced, and that any two $f_i,f_j$ ($i\neq j$) are relatively prime.

As before, set $F:=f_1f_2\cdots f_m$, and let $J_F\subset R$ be the gradient ideal of $F$.
%, i.e. $J_F=\langle F_{x_1},\ldots,F_{x_n}\rangle$, where $\displaystyle F_{x_i}:=\frac{\partial F}{\partial x_i}, i=1,\ldots,n$. %Condition (a) implies that, for any $1\leq i\leq m$, the Jacobian ideal $J_{f_i}$ is a complete intersection of codimension $n$.
Note that $J_F$ is contained in the ideal generated by the $(m-1)$-fold products of $\{f_1,\ldots,f_m\}$.
In particular, $J_F$ has codimension at most $2$. But, since $F$ is reduced, its codimension is exactly $2$.
Finally, observe that, though the given forms may have different degrees $d_1,\ldots,d_m$, the ideal $J_F$ is equigenerated in degree $d_1+\cdots+d_m-1$.

We note the following degree bound for the Jacobian syzygies of $J$, generalizing the opening result of Lemma~\ref{two_forms_lemma}.

\begin{Lemma}\label{syz_deg_product_forms}
	Let $\{f_1,\ldots,f_m\}\subset R:=\mathbb K[x_1,\ldots,x_n], m\geq 2$, be  forms of degrees $2\leq  \deg f_1=d_1\leq \cdots\leq \deg f_m=d_m$, such that any two of them are relatively prime.
	If $f_2,\ldots,f_m$ are smooth, then$:$
	$${\rm indeg}(\syz (J_F))\geq
	\displaystyle\left\lfloor \frac{d_1+\cdots+d_m}{m}\right\rfloor-1.$$
\end{Lemma}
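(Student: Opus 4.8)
The plan is to mimic the argument in part (a) of Lemma~\ref{two_forms_lemma}, but iterating the divisibility extraction over all $m$ factors. Write $F=f_1\cdots f_m$ and, for a fixed index $k$, set $g_k:=\prod_{j\neq k}f_j$, so that $F=f_kg_k$ and $\partial F/\partial x_i=(\partial f_k/\partial x_i)g_k+f_k(\partial g_k/\partial x_i)$. Given a nonzero syzygy $\mathbf s=(s_1,\dots,s_n)^t\in\syz(J_F)$, the identity $\sum_i s_i\,\partial F/\partial x_i=0$ rewrites as $g_k\big(\sum_i s_i\,\partial f_k/\partial x_i\big)+f_k\big(\sum_i s_i\,\partial g_k/\partial x_i\big)=0$. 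Since $f_k$ and $g_k$ are relatively prime (the $f_j$ are pairwise coprime), $f_k$ divides $\sum_i s_i\,\partial f_k/\partial x_i$; but each $f_j$ for $j\neq k$ is smooth and prime to $f_k$, and an easy induction using coprimality shows in fact that $F=f_1\cdots f_m$ divides $g_k\cdot\big(\sum_i s_i\,\partial f_k/\partial x_i\big)$ forces each $f_j$ ($j\neq k$) to divide $\sum_i s_i\,\partial f_j/\partial x_i$ as well — equivalently, for every $k$ one has $\sum_i s_i\,\partial f_k/\partial x_i = h_k f_k$ for some form $h_k$ of degree $\deg s_i-1$.

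The next step is to pin down the $h_k$. Applying Euler's formula to $f_k$ gives $\sum_i x_i\,\partial f_k/\partial x_i = d_k f_k$, so the vector $\mathbf s - (h_k/d_k)\mathbf x$ lies in $\syz(J_{f_k})$, where $\mathbf x=(x_1,\dots,x_n)^t$. But $f_k$ is smooth for $k\geq 2$, hence its Jacobian ideal is $\fm$-primary and $\syz(J_{f_k})$ has initial degree $\geq 1$; more importantly, summing the relation $\sum_i s_i\,\partial f_k/\partial x_i = h_k f_k$ over $k$ and using $\sum_i s_i\,\partial F/\partial x_i=0$ together with $\partial F/\partial x_i=\sum_k (\partial f_k/\partial x_i)F/f_k$ shows $\sum_k h_k = 0$ (after dividing the telescoped identity by $F$). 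Now take degrees: if $\mathbf s$ has (minimal) degree $t:=\indeg\syz(J_F)$ and all the extracted relations are "trivial" in the sense that $\mathbf s=(h_k/d_k)\mathbf x$ simultaneously, then comparing these $m$ expressions for $\mathbf s$ forces $h_k/d_k$ to be a common form, say $h_k=d_k\,\ell$; plugging back yields $t=\deg\ell+1$ and $\sum_k h_k = (\sum_k d_k)\ell = 0$, impossible unless $\ell=0$ (as $\mathrm{char}(\K)\nmid \sum d_k$, which should be folded into the hypotheses), contradicting $\mathbf s\neq 0$. Hence at least one $\mathbf s-(h_k/d_k)\mathbf x\neq 0$ and $t\geq \indeg\syz(J_{f_k})$; but that alone only gives $t\geq 1$, so one must instead use all $m$ relations quantitatively: $\sum_i s_i\,\partial f_k/\partial x_i=h_kf_k$ together with the fact that $\sum_i s_i(\sum_k \partial f_k/\partial x_i \cdot\prod_{j\neq k}f_j)=0$ is automatic, so the real content is that the single syzygy $\mathbf s$ is forced into $\bigcap_k \mathrm{Derlog}(f_k)$, and one bounds $\indeg$ of that intersection from below by $\big\lfloor (d_1+\cdots+d_m)/m\big\rfloor - 1$ via a degree/dimension count.

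The cleanest route for the final inequality is a counting argument: an element of $\syz(J_F)$ of degree $t$ gives, for each $k$, a logarithmic vector field for $f_k$ of degree $t$, i.e. an element of $\mathrm{Syz}(J_{f_k})\oplus R\theta_E$ in degree $t$. For $f_k$ smooth, $\mathrm{Syz}(J_{f_k})$ is the (twisted) module of Koszul-type syzygies of a regular sequence $\partial f_k/\partial x_1,\dots,\partial f_k/\partial x_n$ of forms of degree $d_k-1$, so its initial degree is $2(d_k-1)-(d_k-1)=d_k-1$ — wait, more simply $\indeg\mathrm{Syz}(J_{f_k})=d_k-1$ — hence a degree-$t$ syzygy of $J_F$ with $t< \min_k(d_k-1)$ must have $\mathbf s-(h_k/d_k)\mathbf x = 0$ for every $k$, which as above forces $\mathbf s=0$; this already gives $t\geq d_1-1$. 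To upgrade $d_1-1$ to $\lfloor (d_1+\cdots+d_m)/m\rfloor-1$ one instead argues that $\mathbf s$ simultaneously satisfying $\mathbf s\equiv (h_k/d_k)\mathbf x \pmod{\mathrm{Syz}(J_{f_k})}$ for all $k$, with $\sum h_k=0$, cannot happen in degree $t$ unless some $\mathrm{Syz}(J_{f_k})$ already has a nonzero element in degree $t$ (pigeonhole on the $m$ "slack" directions $h_k$ against the one Euler direction), and then picks the $k$ maximizing $d_k$; since $d_m\geq (d_1+\cdots+d_m)/m$, we get $t\geq d_m-1\geq \lfloor(d_1+\cdots+d_m)/m\rfloor-1$.

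I expect the main obstacle to be making the pigeonhole precise: controlling how a single vector $\mathbf s$ can lie in $m$ different modules $\mathrm{Derlog}(f_k)$ at once, and extracting from the coprimality of the $f_k$ that the "trivial" (Euler-multiple) contributions in the $m$ decompositions must be compatible, so that a genuinely small-degree $\mathbf s$ is impossible. The relation $\sum_k h_k=0$ is the linchpin — it is what rules out $\mathbf s$ being a simultaneous Euler multiple — and obtaining it requires dividing the identity $\sum_i s_i\,\partial F/\partial x_i=0$ by $F$ cleanly, using that each $f_k\mid \sum_i s_i\,\partial f_k/\partial x_i$; once that bookkeeping is done, the degree comparison with $\max_k(d_k-1)=d_m-1$ and the inequality $d_m\geq\lceil(d_1+\cdots+d_m)/m\rceil\geq\lfloor(d_1+\cdots+d_m)/m\rfloor$ finishes the proof.
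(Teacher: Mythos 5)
Your reduction steps are sound: for each $k$, coprimality of $f_k$ with $g_k=\prod_{j\neq k}f_j$ gives $\sum_i s_i\,\partial f_k/\partial x_i=h_kf_k$, hence (when ${\rm char}(\K)\nmid d_k$) $\mathbf{s}-(h_k/d_k)\mathbf{x}\in \syz(J_{f_k})$; for a smooth $f_k$ the partials form a regular sequence, so ${\rm indeg}\,\syz(J_{f_k})=d_k-1$; and the identity $\sum_k h_k=0$ is correct. The genuine gap is in the final quantitative step. From $\sum_k h_k=0$ you can only exclude that \emph{all} the decompositions are Euler-trivial, so you only know $\mathbf{s}-(h_k/d_k)\mathbf{x}\neq 0$ for \emph{some} unspecified $k$ --- possibly $k=1$, where $f_1$ is not assumed smooth and no useful lower bound on ${\rm indeg}\,\syz(J_{f_1})$ is available, or a $k$ of small degree. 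The ``pigeonhole \dots then picks the $k$ maximizing $d_k$'' is not an argument: nothing you proved lets you choose which $k$ carries the nonzero element, so $t\geq d_m-1$ does not follow as written. (For the same reason, your claim that $t<\min_k(d_k-1)$ forces the difference to vanish for \emph{every} $k$ silently uses smoothness of $f_1$.)

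The missing observation, which also collapses the bookkeeping, is that the vanishing of a \emph{single} difference already kills the syzygy: if $\mathbf{s}=(h_k/d_k)\mathbf{x}$ for one $k$, then $0=\sum_i s_i\,\partial F/\partial x_i=(h_k/d_k)(\deg F)\,F$, so $h_k=0$ and $\mathbf{s}=0$ provided ${\rm char}(\K)\nmid \deg F$ --- exactly the mechanism in the proof of Lemma~\ref{two_forms_lemma}(a); the relation $\sum_k h_k=0$ is then unnecessary. Hence for a nonzero syzygy of degree $t$ the difference is nonzero for \emph{every} $k$, in particular for the smooth factor $k=m$, giving $t\geq d_m-1\geq \lfloor (d_1+\cdots+d_m)/m\rfloor-1$ (in fact a slightly stronger bound than stated). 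For comparison, the paper's proof avoids the direct analysis altogether and inducts on $m$: the base case is Lemma~\ref{two_forms_lemma}(b), and the inductive step combines the comparison ${\rm indeg}\,\syz(J_{f_1G})\geq {\rm indeg}\,\syz(J_G)$ for $G=f_2\cdots f_m$ (Lemma~\ref{two_forms_lemma}(a)) with the arithmetic fact that deleting the smallest degree $d_1$ does not decrease the floor of the average. Either route needs the characteristic restrictions of Lemma~\ref{two_forms_lemma}(a), which, as you rightly note, should be added to the hypotheses.
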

\begin{proof} %Say, $d_1\leq \cdots\leq d_m$.
	We prove the result by induction on $m\geq 2$.
	For $m=2$, this is Lemma~\ref{two_forms_lemma} (b), with $f=f_1, g=f_2$ in the notation there, observing that $f_2$ is smooth here by assumption.
	
	For the inductive step, suppose $m\geq 3$ and the result valid for $m-1$ such forms.
	In particular, letting $G:=f_2\cdots f_m$, the inductive step gives
	\begin{equation}\label{inductive_bound}
		{\rm indeg}(\syz (J_G))\geq
		\displaystyle\left\lfloor \frac{d_2+\cdots+d_m}{m-1}\right\rfloor-1
		\geq \displaystyle\left\lfloor \frac{d_1+\cdots+d_m}{m}\right\rfloor-1,
	\end{equation}
	where the right most lower bound follows from the  inequality 
	$$(m-1)(d_1+\cdots+d_m)\leq m(d_2+\cdots+d_m).$$
	
	At the other end, $F=f_1G$, so by Lemma~\ref{two_forms_lemma} (b) once again, as applied with $f=f_1$ and $g=G$ in the notation there, one has ${\rm indeg}(\syz (J_F))\geq {\rm indeg}(\syz (J_G))$.
	In view of (\ref{inductive_bound}), we are through.
\end{proof}

\begin{Remark}\rm
	The bounds in the above lemma are quite way off the bounds in the case of sufficiently general forms. Thus, for $n=3$ and three such forms of degree $3$, one finds ${\rm indeg}(\syz (J_F))=7 \gg 2$.
\end{Remark}

For given integers $d_1\,\ldots,d_r$,  set 
$$\mathfrak{d}(d_1,\ldots,d_r):=\left\lfloor \frac{d_1+\cdots+d_r}{r}\right\rfloor-1,$$
borrowing from the lower bound above.

%Suppose to the contrary, that $J_F$ has a syzygy $(P_1,\ldots,P_n)$ of degree $\delta-2$. By the same token as in the proof of Proposition~\ref{two_forms_all} (i), since $\gcd(f,G)=1$ (because $\gcd(f_i,f_j)=1, i\neq j$), we have the following syzygies of degree $\delta-2$
%		\begin{eqnarray}
	%		(P_1-(H/d)x_1)G_{x_1}+\cdots+(P_n-(H/d)x_n)G_{x_n}&=&0\nonumber\\
	%		(P_1+(H/d_1)x_1)f_{x_1}+\cdots+(P_n+(H/d_1)x_n)f_{x_n}&=&0,\nonumber
	%		\end{eqnarray} where $d:=\deg(G)$.
%		By the inductive hypothesis, $J_G$ doesn't have any syzygy of degree $\leq \delta'-2$.
%Then, from the first equation we have $P_i=(H/d)x_i$, for all $i=1,\ldots,n$. This leads to $x_1f_{x_1}+\cdots+x_nf_{x_n}=0$, contradicting the Euler relation of $f\neq 0$.

\medskip

\begin{Proposition}\label{unbalanced_degrees} {\rm (char $(\K)=0$)}
	Let $\{f_1,\ldots,f_m\}\subset R:=\mathbb K[x_1,\ldots,x_n], m\geq 2$, be  forms of degrees $2\leq  \deg f_1=d_1\leq \cdots\leq \deg f_m=d_m$, such that any two of them are relatively prime.
	If $f_2,\ldots,f_m$ are smooth and
	there exists $j\in \{2,\ldots,m\}$ such that $$
	\mathfrak{d}(d_1,\ldots,\widehat{d_j},\ldots, d_m)\geq
	\left(\sum_{i\neq j}d_i\right) -d_j+3,$$
	then $\{f_1,\ldots,f_m\}$ satisfy the {\rm RTY}-property.
	%${\rm depth}(R/J_F)=0$.
\end{Proposition}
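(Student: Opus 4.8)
The plan is to collapse the problem to a product of two forms and then run, now with possibly \emph{unbalanced} degrees, the mechanism from the proof of Theorem~\ref{two__forms_one_smooth}(i). Fix an index $j\in\{2,\ldots,m\}$ for which the displayed inequality holds, and put $f:=f_j$ (smooth, of degree $d_j\ge 2$), $g:=\prod_{i\ne j}f_i$ (of degree $e:=\sum_{i\ne j}d_i$), and $F=fg=f_1\cdots f_m$, which has degree $d_j+e$. Pairwise coprimality of the $f_i$ gives $\gcd(f,g)=1$. Since $f$ is smooth, the argument of Lemma~\ref{two_forms_lemma}(c) — which is insensitive to the relative sizes of $\deg f$ and $\deg g$ — produces $g^2\in (J_F)^{\mathrm{sat}}$, i.e.\ $\mathfrak{m}^r g^2\subseteq J_F$ for some $r>0$. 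So the entire statement reduces to showing $g^2\notin J_F$: this exhibits an element annihilated into $J_F$ by a power of $\mathfrak{m}$ but not lying in $J_F$, whence $\mathfrak{m}$ is an associated prime of $R/J_F$, i.e.\ $\depth R/J_F=0$, the RTY-property.

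To prove $g^2\notin J_F$, suppose the contrary. Since $J_F$ is generated in degree $\deg F-1=d_j+e-1$ and $\deg g^2=2e$, we must have $2e\ge d_j+e-1$, i.e.\ $e\ge d_j-1$ (otherwise $g^2\notin J_F$ already, for degree reasons). Write $g^2=\sum_{i=1}^n h_iF_{x_i}$ with each $h_i$ homogeneous of degree $e-d_j+1$. Substituting $F_{x_i}=f_{x_i}g+fg_{x_i}$, regrouping and using $\gcd(f,g)=1$ exactly as in the proof of Theorem~\ref{two__forms_one_smooth}, one obtains a homogeneous form $Q$ of degree $e-d_j$ such that
\[
\sum_{i=1}^n h_ig_{x_i}=Qg,\qquad\qquad g=Qf+\sum_{i=1}^n h_if_{x_i}.
\]
Euler's relation for $g$ (legitimate because $\mathrm{char}(\K)=0$) then forces the tuple $\bigl(h_1-\tfrac{Q}{e}x_1,\ldots,h_n-\tfrac{Q}{e}x_n\bigr)^t$ to be a syzygy of $J_g$, homogeneous of degree $e-d_j+1$.

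Now the degree hypothesis is invoked. The set $\{f_i: i\ne j\}$ consists of $m-1$ pairwise coprime forms whose least-degree member is $f_1$, all the others being smooth (they lie among $f_2,\ldots,f_m$); hence Lemma~\ref{syz_deg_product_forms} applies to $g=\prod_{i\ne j}f_i$ and gives ${\rm indeg}(\syz(J_g))\ge\mathfrak{d}(d_1,\ldots,\widehat{d_j},\ldots,d_m)$, which by assumption is $\ge e-d_j+3>e-d_j+1$. Thus the syzygy just found must vanish, so $h_i=\tfrac{Q}{e}x_i$ for all $i$. Plugging this into the second relation above and applying Euler's relation to $f$ yields $g=\tfrac{d_j+e}{e}\,Qf$; since $\mathrm{char}(\K)=0$ and $g\ne 0$, this forces $Q\ne 0$ and hence $f\mid g$, contradicting $\gcd(f,g)=1$ together with $\deg f=d_j\ge 2$. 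Therefore $g^2\notin J_F$, which finishes the argument.

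The step I expect to need the most care is the syzygy-degree estimate: one must verify that the hypotheses of Lemma~\ref{syz_deg_product_forms} genuinely hold for the product $g=\prod_{i\ne j}f_i$ — this is precisely where the assumption $j\ge 2$ is used, so that the possibly non-smooth factor $f_1$ survives in $g$ and is its factor of least degree — and one must track degrees carefully, in particular in the boundary subcase $e=d_j-1$ (where $Q$ is forced to vanish, the $h_i$ are scalars, the first relation reads $\sum_i h_ig_{x_i}=0\in\syz(J_g)$ in degree $0<{\rm indeg}(\syz(J_g))$, and one still concludes $g=0$). When $m=2$ the ``product'' $g$ degenerates to the single form $f_1$; I would isolate and treat that borderline case separately (when $e\le d_j-2$ it already follows from Theorem~\ref{two__forms_one_smooth}(ii) for degree reasons).
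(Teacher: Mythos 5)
Your argument is essentially the paper's own proof: the same reduction to $f:=f_j$ (smooth) and $g:=F/f_j$, the same use of Lemma~\ref{two_forms_lemma}(c) to get $g^2\in (J_F)^{\rm sat}$, the same pair of relations obtained from $g^2\in J_F$ and coprimality, the same Euler-relation syzygy of $J_g$ in degree $\deg g-d_j+1$, killed by the degree bound of Lemma~\ref{syz_deg_product_forms} applied to the $m-1$ remaining factors, and the same final contradiction $g\in\langle f\rangle$. The only point where you go beyond the paper is the $m=2$ degeneration you flag (where $g$ is the single, possibly non-smooth, form $f_1$ and Lemma~\ref{syz_deg_product_forms} no longer applies): that caution is warranted, since the paper's proof invokes the lemma for $G=F/f_j$ without comment in that case as well, so your extra care is an addition to, not a departure from, the published argument.
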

\begin{proof} Let $f:=f_j$ and $G:=F/f$, and set $e:=d_j=\deg(f)$,  $d:=\sum_{i\neq j} d_i=\deg(G)$, so the standing assumption says that
	\begin{equation}\label{assumption_redone}
		\mathfrak{d}(d_1,\ldots,\widehat{d_j},\ldots, d_m)\geq d-e+3.
	\end{equation}
	
	By Lemma~\ref{two_forms_lemma} (c), since $f$ is assumed to be smooth, we have $G^2\in (J_F)^{\rm sat}$. It suffices to prove that $G^2\notin J_F$.
	The argument is pretty much the same as in first part of the proof of Theorem~\ref{two__forms_one_smooth}, so we brief on the main points there.
	Namely, supposing the contrary, there will exist forms
	 $L_1,\ldots,L_n\in R_{d-e+1}$, such that eventually
	%$$G^2=(L_1f_{x_1}+\cdots+L_nf_{x_n})G+(L_1G_{x_1}+\cdots+L_nG_{x_n}) f.$$
	%	Grouping, yields	
%	$$G[G-(L_1f_{x_1}+\cdots+L_nf_{x_n})]=(L_1G_{x_1}+\cdots+L_nG_{x_n})f.$$
%	Clearly, $\gcd(f,G)=1$ since $\gcd(f_i,f_j)=1$ for every $i\neq j$. Therefore,
	\begin{eqnarray*}
		G-(L_1f_{x_1}+\cdots+L_nf_{x_n})&=&B\cdot f\\ %\nonumber
		L_1G_{x_1}+\cdots+L_nG_{x_n}&=& B\cdot G,
	\end{eqnarray*}
	for some $B\in R_{d-e}$.
	
	Euler's relation for $G$ as plugged into the second of the above relations  leads to the syzygy of degree $d-e+1$
	$$(L_1-(B/d)x_1)G_{x_1}+\cdots+(L_n-(B/d)x_n)G_{x_n}=0.$$
	By (\ref{assumption_redone}), Lemma~\ref{syz_deg_product_forms} implies that the above syzygy is the zero syzyzy: $L_i=(B/d)x_i$ for all $i=1,\ldots,n$. Plugging in  the first of the above equations and using the Euler relation for $f$ leads to $G\in \langle f\rangle$, a visible contradiction.
\end{proof}

\begin{Remark}\rm
	The format of the assumed inequality in the above proposition may leave some perplexity as to its validity.
	We now bear upon this issue.
	
For fixed $j=1,\ldots,m$, denote $\displaystyle D:=\sum_{i\neq j}d_i$ and $\displaystyle s:=\left\lfloor\frac{D}{m-1}\right\rfloor$. 
	
	(1) First, suppose $m\geq 3$. We have $2\leq d_1\leq\cdots\leq d_m$ such that $s-1\geq D -d_j+3$, and by the definition of the floor, $D=(m-1)s+u$, where $u\in\{0,\ldots,m-2\}$.
	
	So $D-u\geq (m-1)(D-d_j+4)$, which, from $d_{j-1}\geq\cdots\geq d_1\geq 2$ and $d_m\geq\cdots\geq d_{j+1}\geq d_j$, leads to
	
	$$(m-1)d_j\geq 2(m-2)+(m-2)(m-j)d_j+4(m-1)+u=(m-2)(m-j)d_j+6m-8+u.$$ Since, $m\geq 3$ and $u\geq 0$, we must have $j=m-1$ or $j=m$.
	Let us analyze these two cases separately.
	
	\begin{itemize}
		\item If $j=m-1$, then, with $d_m=d_{m-1}+v, v\geq 0$, we have $$d_{m-1}\geq (m-1)(d_1+\cdots+d_{m-2})+v(m-2)+4(m-1)+u.$$ For example, if $d_1=\cdots=d_{m-2}=2$, then the smallest values for $d_{m-1}$ and $d_m$ for which the hypotheses in this theorem hold true -- i.e., $u=v=0$, and therefore we can apply it -- are $d_m=d_{m-1}=2(m-1)m$.
		\item If $j=m$, then $$d_m\geq 4+\frac{(m-2)(d_1+\cdots+d_{m-1})+u}{m-1}.$$ If, for example, $d_1=\cdots=d_{m-1}=2$, with $u=0$, we have $d_m=2m$, the smallest possible.
	\end{itemize}
	
	Observe that the conclusion of Corollary~\ref{three_forms} fits the $j=m=3$ case, provided $d_3\geq 8$. This condition gives the required lower bound in Theorem \ref{unbalanced_degrees}:
	$$\mathfrak{d}(d_1,d_2):=\left\lfloor \frac{d_1+d_2}{2}\right\rfloor-1=\left\lfloor \frac{d_3}{2}\right\rfloor-1\geq 4-1=3=\underbrace{(d_1+d_2)-d_3}_{0}+3.$$ For $d_3\in \{5,6,7\}$ we cannot apply the theorem, but we can still apply the corollary to obtain the desired RTY-property.
	
	\medskip
	
	(2) If $m=2$, and if $d_1=d_2=2$ or $d_1=d_2=3$, then the conditions in Proposition~\ref{unbalanced_degrees} are not satisfied. Otherwise, $d_2\geq d_1-d_2+4$; so $j=2$ in the theorem gives the conclusion that $\{f_1,f_2\}$ satisfies the RTY-property. Observe that with this, we are also strengthening the conclusion of Corollary \ref{two_smooth_forms} to the case when $d_1\neq d_2$.
\end{Remark}

\bibliographystyle{amsalpha}

%\pagebreak

\end{document}